\documentclass[a4paper,twoside]{article}  % 12pt,,12pt

\usepackage{amssymb,amsmath,amsthm,dsfont,amsfonts,color,latexsym}

\usepackage{hyperref}

\usepackage{mathrsfs} % hyperref,showkeys

\definecolor{blue}{rgb}{0.00,0.00,1.00}
\definecolor{red}{rgb}{1.00,0.00,0.00}

\allowdisplaybreaks[4]
\renewcommand{\baselinestretch}{1.2}
       \hoffset=0truemm
       \voffset=0truemm
       \topmargin=-5truemm
       \oddsidemargin=-5truemm
       \evensidemargin=-5truemm

     \textheight=240truemm
     \textwidth=170truemm

\iffalse
\makeatletter
\renewcommand\section{\@startsection {section}{1}{\z@}%
                                    {-3.5ex \@plus -1ex \@minus -.2ex}%
                                    {2.3ex \@plus.2ex}%
                                    {\normalfont\normalsize\bfseries}}
\makeatother%%% 调整章节标题字体
\fi
%
%\makeatletter                            %% 将文献引用作为上标出现
%\def\@cite#1#2{\textsuperscript{[{#1\if@tempswa , #2\fi}]}}
%\makeatother
%========================= Definitions =============================
\def\bq{\begin{equation}}
\def\eq{\end{equation}}
\def\ba{\begin{array}{ccc}}
\def\bal{\begin{array}{lll}}
\def\ea{\end{array}}
\def\bsp{\begin{split}}
\def\esp{\end{split}}

\def\({\left(}\def\){\right)}
\def\[{\left[}\def\]{\right]}

%%%%%%%%%%%%%%%%%%%%%%%%%%%%%%%%%%%%%%%%%%%%%%%%%%%%%%%%%%%%%%

    \def \R   {\mathbb{R}}
    
    \def\C    {\mathbb{C}}

    \def\eps  {\epsilon}
    \def\intr {\int_{\R^3}}
    
    \def\intt {\int^t_0}

%%%%%% definition for this paper
    \def \Q    {\mathcal{Q}}
    
    \def \N    {\mathbb{N}}

    \def \pt   {\partial}
    
    \def \Dt   {\frac{\rm d}{{\rm d}t}}
    
    \def \dt    {\partial_t}

    \def \dx    {\partial_x}

    \def \dxa   {\partial^{\alpha}_x}

    \def\Tdx   {\nabla_x}
    \def\Tdv   {\nabla_v}

%%%%%%%%%%%%%%%%%%%%%%% environment of math. formula %%%%%%%%%%%%%
%
       \def\bq{\begin{equation}}
       \def\eq{\end{equation}}
       \def\be{\begin{equation}}
       \def\ee{\end{equation}}
       \def\bma#1\ema{{\allowdisplaybreaks\begin{align}#1\end{align}}}
       \def\bmas#1\emas{{\allowdisplaybreaks\begin{align*}#1\end{align*}}}
       \def\bln#1\eln{{\allowdisplaybreaks\begin{aligned}#1\end{aligned}}}
       \def\nnm{\notag}
       \def\bgr#1\egr{\allowdisplaybreaks\begin{gather}#1\end{gather}}
       \def\bgrs#1\egrs{\allowdisplaybreaks\begin{gather*}#1\end{gather*}}

%
%%%%%%%%%%%%%%%%%%%%%%%  Structure of section, theorems  %%%%%%%%%
%

       \theoremstyle{plain}
       \newtheorem{lem}{\bf Lemma}[section]
       \newtheorem{thm}[lem]{\textbf{Theorem}}

       %\newtheorem{cor}[lem]{\textbf{Corollary}}

       %\newtheorem{nota}[lem]{\bf Notation}

       %\theoremstyle{remark}

%%%%%%%%%%%%%%%%%%%%%%%%%%%%%%%%%%%%%%%%%%%%%%%%%%%
%                                                 %
%                                                 %
%                                                 %
%                                                 %
%       end of definitions                        %
%                                                 %
%                                                 %
%                                                 %
%                                                 %
%%%%%%%%%%%%%%%%%%%%%%%%%%%%%%%%%%%%%%%%%%%%5%%%%%%
%
%

\begin{document}

%\begin{CJK*}{GBK}{kai}

\title{Green's Function and Pointwise  Behaviors of the  One-Dimensional modified Vlasov-Poisson-Boltzmann System}

\author{ Yanchao Li$^1$,\, Mingying Zhong$^2$,\, \\[2mm]
 \emph
    {\small\it  $^1$School of Physical Science and Technology,
    Guangxi University, P.R.China.}\\
    {\small\it E-mail:\ liyanchaozjl@$163$.com}\\
    {\small\it  $^2$School of  Mathematics and Information Sciences,
    Guangxi University, P.R.China.}\\
    {\small\it E-mail:\ zhongmingying@sina.com}\\[5mm]
    }
\date{ }

\pagestyle{myheadings}
\markboth{modified Vlasov-Poisson-Boltzmann System}%
{Y.-C. Li, M.-Y. Zhong}

 \maketitle

 \thispagestyle{empty}

\begin{abstract}\noindent{The pointwise space-time behaviors of the Green's function and the global solution to the modified Vlasov-Poisson-Boltzmann (mVPB) system in one-dimensional space are studied in this paper. It is shown that, the Green's function admits the diffusion wave, the Huygens's type sound wave, the singular kinetic wave and the remainder term decaying exponentially in space-time. These behaviors are similar to the Boltzmann equation (Liu and Yu in Comm. Pure Appl. Math. 57: 1543-1608, 2004). Furthermore, we establish the pointwise space-time nonlinear diffusive behaviors of the global solution to the nonlinear mVPB system in terms of the Green's function.
}

\medskip

\textbf{Key words.} modified Vlasov-Poisson-Boltzmann system, Green's function, pointwise behavior, spectrum analysis.

\textbf{2010 Mathematics Subject Classification.} 76P05, 82C40, 82D05.

\end{abstract}

\tableofcontents

%%%%%%%%%%%%%%%%%%%%%%%%%%%%%%%%%%%%%%%%%%%%%%%%%%%%%%%%%%%%%%%%%%%%%%%%%%%%%%%%%%%%%%%%%%%%
%                                                                                          %
%   INTRODUCTION                                                                           %
%                                                                                          %
%%%%%%%%%%%%%%%%%%%%%%%%%%%%%%%%%%%%%%%%%%%%%%%%%%%%%%%%%%%%%%%%%%%%%%%%%%%%%%%%%%%%%%%%%%%%

\section{Introduction}
\label{sect1}
\setcounter{equation}{0}

The  Vlasov-Poisson-Boltzmann  system is used to model the time
evolution of dilute charged particles  (e.g., electrons and ions) in the absence of an external magnetic field \cite{MARKOWICH-1}. Assume that the electron density is very rarefied and reaches a local equilibrium state with small electron mass compared with the ions, the motion of ions can be described by the  following modified Vlasov-Poisson-Boltzmann (mVPB) system \cite{Codier-Grenier}:
\bma
&\dt F+v_{1}\dx F+\dx \Phi\partial_{v_{1}} F =\Q(F,F),\label{mVPB1}\\
&\partial_{xx}\Phi=\intr Fdv-e^{-\Phi},\label{mVPB2}
\ema
where $F=F(t,x,v)$ is the distribution function of ions with $(t,x,v)\in\R_+\times\R^1_x\times\R^3_v$, and $\Phi(t,x)$ denotes the
electric potential. The collision between particles is given by the standard Boltzmann collision operator $\Q(f,g)$ as below
\bq
\mathcal{Q}(f,g)=\frac{1}{2}\int_{\mathbb{R}^3}\int_{\mathbb{S}^2}|(v-v_{\ast})\cdot\omega|(f'_{\ast}g'+f'g'_{\ast}-f_{\ast}g-fg_{\ast})dv_{\ast}d\omega,
\eq
where
\bmas
&f'_{\ast}=f(t,x,v'_{\ast}),\quad f'=f(t,x,v'),\quad f_{\ast}=f(t,x,v_{\ast}),\quad f=f(t,x,v), \\
&v'=v-[(v-v_{\ast})\cdot\omega]\omega,\quad v'_{\ast}=v_{\ast}-[(v-v_{\ast})\cdot\omega]\omega, \quad\omega\in\mathbb{S}^2.
\emas

There has been much important progress made recently on the well-posedness and asymptotical behaviors of solution to the Vlasov-Poisson-Boltzmann(VPB) system. In particular, the global existence of a renormalized weak solution for general large initial data was shown in \cite{Lions-1,Lions-2,MISCHLER-1}. The global existence of a unique strong solution with the initial data near the normalized global Maxwellian was obtained in spatial period domain \cite{GUO-3} and in spatial whole space \cite{DUAN-1,YANG-1,YANG-2} for hard sphere, and then in \cite{DUAN-3,DUAN-4} for hard potential or soft potential. The existence of a classical solution near a vacuum was investigated in \cite{GUO-2}. %The time-asymptotic behaviors of global strong solutions to the VPB system was studied in \cite{DUAN-2,YANG-2}.

The long time behaviors of the global solutions to the VPB systems were studied in \cite{DUAN-2,LI-2,LI-1,YANG-3}. Indeed, the decay rate $(1+t)^{-\frac{1}{4}}$ in $L^2$ norm for one-species VPB system in $\R^3$ was obtained in \cite{DUAN-2,LI-1}, and the decay rate $(1+t)^{-\frac{3}{4}}$ in $L^2$ norm for two-species VPB and modified VPB systems in $\R^3$ was obtained in \cite{LI-2,YANG-3}. The spectrum analysis and the optimal time-decay rate of the global solutions to the VPB systems for both one-species and two-species were studied in \cite{LI-2,LI-1}. The pointwise space-time behaviors of the Green's function and the global solution to the one-species VPB system was studied in \cite{LI-5}. Moreover, the wave phenomena is observed for one-dimensional VPB system  \cite{DUAN-5,LI-3,LI-4}, such as the shock profile, rarefaction wave and viscous contact wave.

In this paper, we study the pointwise space-time behaviors of the Green's function and the global solution to the one-dimensional  mVPB system \eqref{mVPB1}--\eqref{mVPB2} based on the spectral analysis \cite{LI-2}. To begin with, let us consider the Cauchy problem for the one-dimensional mVPB
system \eqref{mVPB1}--\eqref{mVPB2} with the following initial data:
\bq
F(0,x,v)=F_0(x,v),\quad (x,v)\in\mathbb{R}^1_x\times\mathbb{R}^3_v.\label{mVPB3}
\eq

The mVPB system \eqref{mVPB1}--\eqref{mVPB2} has an equilibrium state $(F_*,\Phi_*)=(M(v),0)$ with the normalized Maxwellian $M(v)$ defined by
$$
M(v)=\frac{1}{(2\pi)^{\frac{3}{2}}}e^{-\frac{|v|^2}{2}},\quad v\in\R^3.
$$

Define the perturbation  $f(t,x,v)$ of $F(t,x,v)$ near  $M$ by
$$
F=M+\sqrt M f,
$$
then the mVPB system \eqref{mVPB1}, \eqref{mVPB2} and \eqref{mVPB3}  is reformulated into
\bma
 &\dt f+v_{1}\dx  f-v_{1}\sqrt{M}\dx  \Phi-Lf
=\frac12 (v_{1}\dx \Phi)f-\dx  \Phi\partial_{v_{1}} f+\Gamma(f,f),\label{mVPB8}\\
&(I-\partial_{xx})\Phi=-\intr f\sqrt{M}dv+(e^{-\Phi}+\Phi-1),\label{mVPB9}\\
&f(0,x,v)=f_0(x,v)=(F_0-M){M}^{-1/2},\label{mVPB10}
 \ema
 where the linearized collision operator $Lf$ and the nonlinear term $\Gamma(f,f)$ are defined by
 \bma
&Lf=\frac1{\sqrt M}[\Q(M,\sqrt{M}f)+\Q(\sqrt{M}f,M)],  \label{Lf}\\
&\Gamma(f,f)=\frac1{\sqrt M}\Q(\sqrt{M}f,\sqrt{M}f).   \label{gf}
 \ema

We have, cf \cite{CERCIGNANI-1,Glassey-1},
 \be \left\{\bln\label{LKF}
 &(Lf)(v)=(Kf)(v)-\nu(v) f(v),   \quad
 (Kf)(v)=\intr k(v,v_*)f(v_*)dv_*, \\
&k(v,v_*)=\frac2{\sqrt{2\pi}|v-v_*|}e^{-\frac{(|v|^2-|v_*|^2)^2}{8|v-v_*|^2}-\frac{|v-v_*|^2}8}-\frac{|v-v_*|}{2\sqrt{2\pi}}e^{-\frac{|v|^2+|v_*|^2}4},\\
&\nu(v)= \sqrt{2\pi} \bigg(e^{-\frac{|v|^2}2}+ \(|v|+\frac1{|v|}\)\int^{|v|}_0e^{-\frac{|u|^2}2}du\bigg),
\eln\right.
 \ee
where  $\nu(v)$,  the collision frequency, is a real  function, and
$K$ is a self-adjoint compact operator on $L^2(\R^3_v)$ with  real symmetric integral kernels $k(v,v_*)$. For hard sphere model, $\nu(v)$ satisfies
 \be
\nu_0(1+|v|)\leq\nu(v)\leq \nu_1(1+|v|),  \label{nuv}
 \ee
 with $\nu_1\geq \nu_0>0$ two constants.

The null space of the operator $L$, denoted by $N_0$, is a five-dimensional subspace
spanned by the orthogonal basis $\{ \chi_j,\ j=0,1,2,3,4\}$  with
 \bq
  \chi_0=\sqrt{M},\quad  \chi_j=v_j\sqrt{M},\ j=1,2,3,\quad \chi_4=\frac{(|v|^2-3)\sqrt{M}}{\sqrt{6}}.\label{basis}
 \eq

Let   $L^2(\R^3)$ be a Hilbert space of complex-value functions $f(v)$
 with the inner product and the norm
$$
(f,g)=\intr f(v)\overline{g(v)}dv,\quad \|f\|=\(\intr |f(v)|^2dv\)^{1/2}.
$$

Introduce the macro-micro decomposition as follows
\be \label{P10}
\left\{\bal
f=P_0f+P_1f,\\
P_0f=P^1_0f+P^2_0f+P^3_0f,\,\,\,  P_1f=f-P_0f,\\
P^1_0f=(f, \chi_0) \chi_0,\,\,\, P^3_0f=(f,\chi_4)\chi_4,\\
P^2_0f=\sum^3_{k=1}(f,\chi_k)\chi_k.
\ea\right.
\ee

From the Boltzmann's H-theorem, $L$ is non-positive and
moreover, $L$ is locally coercive in the sense that there is a constant $\mu>0$ such that
\be
 (Lf,f) \leq -\mu \| P_1f\|^2, \quad   f\in D(L),\label{L_3}
 \ee
where $D(L)$ is the domains of $L$ given by
$$ D(L)=\left\{f\in L^2(\R^3)\,|\,\nu(v)f\in L^2(\R^3)\right\}.$$

Without the loss of generality, we assume in this paper that $\nu(0)\geq \nu_0\geq \mu>0$.

Since we only consider the pointwise behavior with respect to the space-time variable $(t,x)$, it's convenient to regard the Green's function $G(t,x)$ as an  operator on $L^2(\R^3_v)$:
\be   \label{LmVPB}
 \left\{\bln
 &\dt G =B G , \\
 &G(0,x)=\delta(x)I_v,
 \eln\right.
 \ee
 where $I_v$ is the identity map on $L^2(\R^3_v)$ and the operator $B$ is defined by
\be
 B f=Lf-v_{1}\dx f -v_{1}\dx (I-\partial_{xx})^{-1} P^1_0f. \label{B0(x)}
\ee

Then, the solution for the initial value problem of the linearized mVPB  equation
 \bq
 \left\{\bln
 &\dt f=B f, \\
 &f(0,x,v)=f_0(x,v),   \label{mVPB}
 \eln\right.
 \eq
can be represented by
\be f(t,x)=G(t)\ast f_0=\int_{-\infty}^{+\infty} G(x-y,t)f_0(y)dy, \ee
where $f_0(y)=f_0(y,v).$

For any $(t,x)$ and $f\in L^2(\R^3_v)$, we define the $L^2$ norm of $G(t,x)$ by
\be \|G(t,x)\|=\sup_{\|f\|=1}\|G(t,x)f\|, \label{norm}\ee
and define the $L^2$ norm of a operator $T$ in $L^2(\R^3_v)$ as
\be \|T\|=\sup_{\|f\|=1}\|Tf\|. \label{norm1}\ee

\noindent\textbf{Notations}.\ Before stating the main results in this paper, we list some notations. The Fourier transform of $f=f(x,v)$
is denoted by
$$\hat{f}(\eta,v)=\mathcal{F}f(x,v)=\frac1{\sqrt{2\pi}}\int^{+\infty}_{-\infty} e^{-i x \eta}f(x,v)dx,$$
where and throughout this paper we denote $i=\sqrt{-1}$.

Set a weight function $w(v)$ by
$$
w(v)=(1+|v|^2)^{1/2},
$$
and the Sobolev space $ H^N_{k}=\{\,f\in L^2(\R^1_x\times \R^3_v)\,|\,\|f\|_{H^N_{k}}<\infty\}$
equipped with the norm
$$
 \|f\|_{H^N_{k}}=\sum_{|\alpha|\leq N}\|w^k\dxa f\|_{L^2(\R_x\times \R^3_v)}.
$$

In what follows,  denote by $\|\cdot\|_{L^2_{x,v}}$, $\|\cdot\|_{L^2_{\eta,v}}$ and $\|\cdot\|_{L^{\infty}_x}$ the norms of the function spaces $L^2(\R_x\times \R^3_v)$, $L^2(\R_{\eta}\times \R^3_v)$ and $L^{\infty}(\R_x)$
respectively, and by $\|\cdot\|_{L^2_x}$, $\|\cdot\|_{L^2_{\eta}}$ and $\|\cdot\|_{L^2_v}$  the norms of the function spaces $L^2(\R_x)$, $L^2(\R_{\eta})$ and $L^2(\R^3_v)$ respectively.

\medskip

First, we have the pointwise space-time behaviors of the Green's function for the linearized mVPB system \eqref{LmVPB} as follows:
\begin{thm}\label{mvpbth1}
Let $G(t,x)$ be the Green's function for the mVPB system defined by \eqref{LmVPB}. Then, the Green's function $G(t,x)$ can be decomposed into
$$
G(t,x)=G_1(t,x)+G_2(t,x)+W_{2}(t,x),
$$
where $W_2(t,x)$ is the singular kinetic wave, $G_1(t,x)$ is the fluid part at low frequency and $G_2(t,x)$ is the remainder part. For $\alpha\geq0$, there exist two positive constants $C$ and $D$ such that the fluid part $G_1(t,x)$ is smooth and satisfies
\bq\label{Thm1}
\left\{\bln
&\|\dxa P^j_0G_1(t,x)\|\leq C(1+t)^{-\frac{1+\alpha}{2}}\sum^1_{i=-1}e^{-\frac{|x-\beta_it|^2}{D(1+t)}}+Ce^{-\frac{|x|+t}{D}},\quad j=1,2,3,\\
&\|\dxa P_{1}G_1(t,x)\|,\|\dxa G_1(t,x)P_1\|\leq
C(1+t)^{-\frac{2+\alpha}{2}}\sum^{1}_{i=-1}e^{-\frac{|x-\beta_it|^2}{D(1+t)}}+Ce^{-\frac{|x|+t}{D}},\\
&\|\dxa P_{1}G_1(t,x)P_1\|\leq
C(1+t)^{-\frac{3+\alpha}{2}}\sum^{1}_{i=-1}e^{-\frac{|x-\beta_it|^2}{D(1+t)}}+Ce^{-\frac{|x|+t}{D}},
\eln\right.
\eq
where $\beta_j$, $j=-1,0,1$ is sound speed defined by
$$
\beta_{\pm 1}=\pm\sqrt{\frac{8}{3}},\quad  \beta_{0}=0.
$$

And the remainder part $G_2(t,x)$ is bounded and satisfies
\bq
\|G_2(t,x)\|\leq Ce^{-\frac{|x|+t}{D}}.\label{Thm2}
\eq

The singular kinetic wave $W_{2}(t,x)$ is defined by
\bq\label{Wk}
W_{2}(t,x)=\sum^{6}_{k=0}J_k(t,x),
\eq
with
$$
\left\{\bln
&J_0(t,x)=S^t\delta(x)I_v=e^{-\nu(v)t}\delta(x-v_1t)I_v,\\
&J_k(t,x)=\int^t_0S^{t-s}(K+v_1\dx (I-\partial_{xx})^{-1}P^1_0)J_{k-1}ds,\quad k\geq1.
\eln\right.
$$

Here $I_v$ is the identity map in $L^2(\mathbb{R}^3_v)$ and the operator $S^t$ is defined by
\bq\label{St}
S^tg(x,v)=e^{-\nu(v)t}g(x-v_1t,v).
\eq
\end{thm}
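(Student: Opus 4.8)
The plan is to analyze the Green's function via its Fourier transform in $x$. Writing $\hat G(t,\eta)$ for the solution of $\dt \hat G = \hat B(\eta)\hat G$, $\hat G(0,\eta)=I_v$, where $\hat B(\eta)=L - i\eta v_1 - i\eta v_1(1+\eta^2)^{-1}P^1_0$, I would use the spectral analysis of $\hat B(\eta)$ from \cite{LI-2}: for $|\eta|$ small there are five smooth eigenvalues $\lambda_j(\eta)$ ($j=-1,0,1,2,3$) with asymptotic expansions $\lambda_{\pm1}(\eta) = \pm i\beta_1\eta - a_1\eta^2 + O(\eta^3)$, $\lambda_0(\eta)=-a_0\eta^2 + O(\eta^3)$, and $\lambda_{2,3}(\eta)=-a_{2,3}\eta^2+O(\eta^3)$ with all $a_j>0$, together with the spectral gap $\mathrm{Re}\,\lambda_j(\eta)\le -c|\eta|^2/(1+|\eta|^2)$; for $|\eta|$ bounded away from $0$ the semigroup $e^{t\hat B(\eta)}$ on the orthogonal complement decays exponentially. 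The factor $(1+\eta^2)^{-1}$ in the Poisson term is what makes this "modified" system behave like Boltzmann (rather than producing the $|\eta|^{-1}$ singularity of the unmodified VPB operator), so the low-frequency structure is a genuine five-wave fluid part with no extra singular correction at $\eta=0$.

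The first step is the decomposition $\hat G = \chi_{|\eta|\le r_0}\hat G + \chi_{r_0\le|\eta|\le R_0}\hat G + \chi_{|\eta|\ge R_0}\hat G$. For the low-frequency piece I would write $\chi_{|\eta|\le r_0}\hat G(t,\eta)=\sum_{j=-1}^{3} e^{\lambda_j(\eta)t}P_j(\eta)$ where $P_j(\eta)$ are the (smooth, uniformly bounded) spectral projections, and define $\hat G_1(t,\eta)$ to be exactly this sum times $\chi_{|\eta|\le r_0}$. The pointwise estimates \eqref{Thm1} then follow by the now-standard complex-analytic / stationary-phase argument: for the $\beta_{\pm1}$ waves one shifts the contour and uses $e^{\pm i\beta_1\eta t - a_1\eta^2 t}$ to produce the Gaussian $e^{-|x\mp\beta_1 t|^2/D(1+t)}$; for the $\beta_0=0$ wave one gets the heat kernel $e^{-|x|^2/D(1+t)}$ directly; the extra decay rates $(1+t)^{-(2+\alpha)/2}$ and $(1+t)^{-(3+\alpha)/2}$ for $P_1 G_1$, $G_1 P_1$ and $P_1 G_1 P_1$ come from the fact that the eigenprojections satisfy $P_1 P_j(\eta) = O(|\eta|)$ and $P_1 P_j(\eta) P_1 = O(|\eta|^2)$ near $\eta=0$ (the macroscopic part dominates at low frequency), so each factor of $P_1$ buys one power of $\eta$, hence half a power of $(1+t)^{-1/2}$ in $x$-space after integration. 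The $\partial_x^\alpha$ just brings down a factor $(i\eta)^\alpha$, i.e. another $\alpha/2$ powers. All of this also produces the exponentially small remainder $e^{-(|x|+t)/D}$ from estimating the tails and the contour corrections.

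The remainder part $G_2(t,x)$ is defined as the inverse Fourier transform of the middle-and-high frequency pieces $(\chi_{r_0\le|\eta|\le R_0}+\chi_{|\eta|\ge R_0})\hat G$ together with the non-fluid spectral part at low frequency; the bound \eqref{Thm2} follows from the uniform spectral gap $\mathrm{Re}\,\sigma(\hat B(\eta))\le -\delta$ for $|\eta|\ge r_0$ — which gives $\|e^{t\hat B(\eta)}(I - \sum_j P_j(\eta))\|\le Ce^{-\delta t}$ — combined with a Paley–Wiener / contour-shift argument in $\eta$ to convert this into exponential decay in $|x|$ as well; the boundedness (rather than merely $L^2$) of $G_2$ comes from subtracting off the singular kinetic wave. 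That subtraction is the point of $W_2$: the high-frequency asymptotics of $\hat B(\eta)$ are governed by the free-streaming-plus-damping operator, and $J_0 = S^t\delta(x)I_v$ carries the Dirac singularity along the characteristic $x=v_1 t$ while the Duhamel iterates $J_k$ carry progressively milder singularities; by the Picard iteration $G = \sum_{k=0}^{6} J_k + (\text{Duhamel remainder})$ one peels off enough terms that the remaining operator is bounded on $L^2(\R^3_v)$ — here one uses that $K + v_1\partial_x(I-\partial_{xx})^{-1}P^1_0$ is regularizing enough that a fixed finite number of convolutions with it (the paper takes six) produces a bounded kernel, mirroring the classical Boltzmann argument of Liu–Yu.

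The main obstacle, and where the real work lies, is the transition-frequency regime $r_0\le|\eta|\le R_0$ and the precise low-frequency expansions: one must verify that the five eigenvalues stay simple and analytic on a neighborhood of $[0,r_0]$, that no eigenvalue of $\hat B(\eta)$ touches the imaginary axis for $|\eta|\ge r_0$ (so the spectral gap $\delta>0$ is uniform), and — most delicately — that the contour deformations used to extract the Gaussian bounds and the exponential-in-$x$ decay can be carried out simultaneously, i.e. that the analytic continuation of $\lambda_j(\eta)$ and $P_j(\eta)$ into a complex strip $|\mathrm{Im}\,\eta|\le \epsilon_0$ exists with the right real-part control $\mathrm{Re}\,\lambda_j(\eta)\le -c\,\mathrm{Re}(\eta)^2 + C\epsilon_0^2$. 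Once these spectral facts are in hand (citing \cite{LI-2} for the bulk of them), assembling \eqref{Thm1}–\eqref{Thm2} is routine if lengthy; I would organize it as a sequence of lemmas — spectral structure, low-frequency wave estimate, high-frequency/remainder estimate, kinetic-wave decomposition — and then combine.
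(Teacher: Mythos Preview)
Your low-frequency analysis and the Picard-iteration description of $W_2$ are correct and match the paper. The genuine gap is in how you obtain the exponential-in-$|x|$ decay for the remainder $G_2$.

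You propose a Paley--Wiener / contour-shift argument in $\eta$ to convert the spectral-gap time decay $e^{-\delta t}$ into space decay $e^{-|x|/D}$. This does not work for the full semigroup $e^{t\hat B(\eta)}$: when $\eta$ is shifted to $\eta_1+i\eta_2$, the streaming term $-i\eta v_1$ picks up the unbounded real part $\eta_2 v_1$, so $\hat B(\eta)$ does not generate a semigroup with uniform bounds off the real axis, and the spectral gap established for real $\eta$ does not extend to a complex strip. The Mixture Lemma gives analyticity in $D_{\nu_0}$ only for the \emph{finite} iterates $\hat J_k$ (via the smoothing of $K$), which is precisely why one can contour-shift $J_6$ but not the full $\hat G_H$.

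The paper resolves this differently. It splits physical space into the inside-Mach region $|x|\le 6t$ and the outside region $|x|>6t$, and defines $G_1=G_{L,0}\cdot 1_{\{|x|\le 6t\}}$ with a \emph{physical-space} cutoff (not just a frequency cutoff). Inside the Mach region, the contour-shift for the fluid part works because the shifted contour $\mathrm{Im}\,\eta=(x-\beta_j t)/(C_2 t)$ stays in the analyticity disc $|\eta|<r_0$; and for $G_{L,1}$ and $G_H-W_2$ the spectral-gap time decay alone suffices since $|x|\le 6t$ already gives $e^{-ct}\le e^{-(|x|+t)/D}$. Outside the Mach region, the remainder $R_2=G-W_2$ satisfies the linear equation with source $KJ_6+v_1\chi_0\partial_x\Theta_6$, which is exponentially small in $|x|+t$ by the Mixture Lemma; the paper then runs a \emph{weighted energy estimate} in physical space with weight $w=e^{\epsilon(|x|-Yt)}$, relying on the pointwise inequality
\[
\int_{\mathbb R}\bigl|\partial_x^\alpha(I-\partial_{xx})^{-1}f\bigr|^2 e^{\delta|x|}\,dx\le \frac{4}{(2-\delta)^2}\int_{\mathbb R}|f|^2 e^{\delta|x|}\,dx
\]
to handle the nonlocal Poisson term. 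This yields $\|R_2(t,x)\|\le Ce^{-(|x|+t)/D}$ for $|x|>6t$. Without the inside/outside split and this weighted-energy step, your scheme does not close.
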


Then we have the pointwise space-time behavior of the global solution to the nonlinear mVPB system \eqref{mVPB8}--\eqref{mVPB10} as follows:

\begin{thm}\label{mvpbth2}
There exists a small constant $\delta_0>0$ such that if the initial data $f_0$ satisfies $\|f_0\|_{H^4_3}\leq\delta_0$ and
\bq\label{F0}
\|\dxa f_0(x)\|_{L^{\infty}_{v,3}}+\|\nabla_vf_0(x)\|_{L^{\infty}_{v,2}}\leq C\delta_0(1+|x|^2)^{-\gamma_0},
\eq
for $\alpha=0,1$ and $\gamma_0>1/2$, then there exists a unique global solution $(f,\Phi)$ to the mVPB system \eqref{mVPB8}--\eqref{mVPB10} satisfying
\bma
%&\|P_{0i}f(t,x)\|+|\Phi(t,x)|\leq C\delta_0(1+t)^{-\frac{1}{2}}\sum^1_{i=-1}B_{\frac{1}{2}}(t,x-\beta_it),\ i=1,2,3,\label{NL1}\\
\|f(t,x)\|_{L^{\infty}_{v,3}}+\|\partial_{v_1}f(t,x)\|_{L^{\infty}_{v,2}}+|\Phi(t,x)|&\leq C\delta_0(1+t)^{-\frac{1}{2}}\sum^1_{i=-1}B_{\frac{1}{2}}(t,x-\beta_it),\label{NL5}\\
\|\dx f(t,x)\|_{L^{\infty}_{v,3}}+|\dx \Phi(t,x)|+|\partial_t\Phi(t,x)|&\leq C\delta_0(1+t)^{-1}\sum^1_{i=-1}B_{\frac{1}{2}}(t,x-\beta_it),\label{NL4}
\ema
where the space-time diffusive profile $B_k(t,x-\lambda t)$ is defined for any $k>0$ and $\lambda\in \R$ by
$$
B_k(t,x-\lambda t)=\(1+\frac{|x-\lambda t|^2}{1+t}\)^{-k},\quad (t,x)\in\mathbb{R}_{+}\times\mathbb{R}.
$$
\end{thm}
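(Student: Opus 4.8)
The plan is to derive the pointwise nonlinear estimates \eqref{NL5}--\eqref{NL4} by a bootstrap (continuity) argument combined with the Duhamel representation against the Green's function $G(t,x)$ constructed in Theorem \ref{mvpbth1}. First I would set up the solution ansatz: the nonlinear system \eqref{mVPB8}--\eqref{mVPB10} can be written as $\dt f = Bf + N[f]$, where $N[f]=\frac12(v_1\dx\Phi)f-\dx\Phi\,\partial_{v_1}f+\Gamma(f,f)$ plus the nonlinear correction coming from expanding $(e^{-\Phi}+\Phi-1)$ in \eqref{mVPB9}; accordingly
$$
f(t,x)=\int_{-\infty}^{+\infty}G(t,x-y)f_0(y)\,dy+\int_0^t\!\!\int_{-\infty}^{+\infty}G(t-s,x-y)N[f](s,y)\,dy\,ds.
$$
The electric potential is recovered from $f$ via the elliptic equation \eqref{mVPB9}, so $\Phi$, $\dx\Phi$, $\dt\Phi$ are controlled by convolution of $(I-\partial_{xx})^{-1}$ (and $\dx(I-\partial_{xx})^{-1}$, $\dt$ acting through the equation) with $\intr f\sqrt M\,dv$; since the kernel of $(I-\partial_{xx})^{-1}$ is $\frac12 e^{-|x|}$, these operations are pointwise-friendly and do not worsen the ansatz profile $\sum_{i}B_{1/2}(t,x-\beta_i t)$. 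The global $H^4_3$ smallness hypothesis $\|f_0\|_{H^4_3}\le\delta_0$ gives, via the standard energy method for the mVPB system (as in the cited spectral/energy framework \cite{LI-2}), a uniform-in-time a priori bound $\|f(t)\|_{H^4_3}\lesssim\delta_0$ and decay $\|f(t)\|_{L^2_{x,v}}\lesssim\delta_0(1+t)^{-1/4}$, which I will use to absorb the high-frequency/high-velocity parts that the pointwise Green's function estimates do not see.

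The core of the argument is the ansatz space. I would define
$$
M(T)=\sup_{0\le t\le T}\sup_{x}\Big\{\tfrac{\|f(t,x)\|_{L^\infty_{v,3}}+\|\partial_{v_1}f(t,x)\|_{L^\infty_{v,2}}+|\Phi(t,x)|}{\delta_0(1+t)^{-1/2}\sum_{i=-1}^1 B_{1/2}(t,x-\beta_i t)}+\tfrac{\|\dx f(t,x)\|_{L^\infty_{v,3}}+|\dx\Phi|+|\partial_t\Phi|}{\delta_0(1+t)^{-1}\sum_{i=-1}^1 B_{1/2}(t,x-\beta_i t)}\Big\}
$$
and show $M(T)\le C_0$ for all $T$ by proving $M(T)\le C_1+C_2(M(T)^2+\delta_0 M(T))$ with $C_1$ depending only on the data and $C_2$ absolute. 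The linear term $\int G(t,x-y)f_0(y)dy$ is handled directly: split $G=G_1+G_2+W_2$; for $G_1$ use \eqref{Thm1} (the Gaussian-in-$(x-\beta_i t)$ bounds convolved against the algebraically-localized data \eqref{F0} produce, by a standard heat-kernel-versus-algebraic-tail lemma, the claimed $(1+t)^{-1/2}B_{1/2}$ profile — noting $P^j_0$-components decay like $(1+t)^{-1/2}$ and $\dx$ or $P_1$ gains an extra half power matching \eqref{NL4}); for $G_2$ and the exponential remainders use \eqref{Thm2} and the exponential factor $e^{-(|x|+t)/D}$, trivially dominated; for the singular kinetic wave $W_2(t,x)=\sum_{k=0}^6 J_k(t,x)$ use the explicit transport-damping structure — $J_0$ is $e^{-\nu(v)t}\delta(x-v_1t)I_v$, and after convolving with $f_0$ one gets $e^{-\nu(v)t}f_0(x-v_1t,v)$, whose $\|\cdot\|_{L^\infty_{v,3}}$ norm is bounded using $e^{-\nu_0 t}$ together with the algebraic localization of $f_0$ and the elementary inequality $e^{-\nu_0 t}\lesssim (1+t)^{-1/2}$, while the higher $J_k$ gain space-time decay from the iterated $\int_0^t S^{t-s}(\cdots)ds$ operators, all ultimately exponentially small in $|x|+t$ away from the kinetic light cone. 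The $\partial_{v_1}f$ component requires propagating the $v$-derivative: differentiating Duhamel in $v_1$ produces an extra $t$ from $\partial_{v_1}(x-v_1 t)$ in the free-streaming part, which is why the velocity-weight drops from $3$ to $2$ and no extra time decay is claimed for it beyond $(1+t)^{-1/2}$.

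For the nonlinear (Duhamel) term, the scheme is: estimate $N[f](s,y)$ pointwise in terms of $M(s)$. The quadratic terms $\Gamma(f,f)$, $(v_1\dx\Phi)f$, $\dx\Phi\,\partial_{v_1}f$ are each bounded by $\big(\delta_0 M(s)\big)^2(1+s)^{-1}\big(\sum_i B_{1/2}(s,y-\beta_i s)\big)^2$ (using that $\Gamma$ is bilinear and bounded with the velocity weight, that $\dx\Phi$ carries the faster $(1+s)^{-1}$ rate from \eqref{NL4}, and that $B_{1/2}^2=B_1$ is integrable in $y$); the cubic-and-higher terms from the nonlinear elliptic correction are even better. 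Then I convolve these bounds against $\|G(t-s,x-y)\|$ and invoke the now-standard pointwise estimate lemmas for convolutions of the diffusion wave / Huygens wave / exponential kernels against source terms of the form $(1+s)^{-q}B_p(s,y-\beta_i s)$ — these are the same lemmas used for the Boltzmann equation in Liu--Yu and for the one-species VPB system in \cite{LI-5}; they yield, for the fluid part $G_1$, a contribution of size $\delta_0^2 M(s)^2 (1+t)^{-1/2}\sum_i B_{1/2}(t,x-\beta_i t)$ (the exponent bookkeeping: source rate $(1+s)^{-1}$ integrated against the $(1+t-s)^{-1}$ fluid diffusion wave over $[0,t]$, together with the coupling of two diffusion waves centered at possibly different speeds $\beta_i$, produces exactly the claimed profile), and for $G_2$ and $W_2$ an exponentially localized and hence negligible contribution. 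Matching the $\dx$-differentiated equation gives the $(1+t)^{-1}$ estimate \eqref{NL4} by the same bookkeeping with one extra half power from $\dxa G_1$ in \eqref{Thm1}. The main obstacle I anticipate is this exponent/convolution bookkeeping for the coupled multi-speed diffusion waves — in particular making sure the interaction of waves centered at $\beta_1 t$, $0$, and $\beta_{-1}t$ closes at the single rate $(1+t)^{-1/2}$ without loss (one must use cancellation between the $\beta_i$ in the intermediate-field region, as in the Liu--Yu analysis), and, secondarily, controlling the $v$-derivative transport growth so that $\|\partial_{v_1}f\|_{L^\infty_{v,2}}$ really closes against the $(1+t)^{-1/2}$ ansatz rather than degrading it; everything else is a routine application of the Green's function bounds from Theorem \ref{mvpbth1} and the standard nonlinear energy estimates.
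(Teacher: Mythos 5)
The high-level scaffolding of your proposal -- Duhamel against the decomposition $G=G_1+G_2+W_2$, a bootstrap functional $M(T)$, and the standard convolution lemmas for diffusion/Huygens/kinetic waves -- matches the paper's strategy. However there is a genuine gap in the treatment of the macroscopic projection of the electric-field nonlinearity $H$. Observe that $P_0H$ is not zero (unlike $P_0\Gamma(f,f)=0$), and the naive pointwise bound coming from $|\dx\Phi|\lesssim(1+s)^{-1}B_{1/2}$ and $\|f\|_{L^\infty_{v,3}}\lesssim(1+s)^{-1/2}B_{1/2}$ gives $P_0H\sim(1+s)^{-3/2}B_1$. Feeding this into $\int_0^{t/2}\partial_x^\alpha G_1(t-s)\ast P_0H(s)\,ds$, where the $P_0\to P_0$ component of $G_1$ decays only like $(1+t-s)^{-(1+\alpha)/2}$, produces for $\alpha=0$ an integral of type $I^{1/2,3/2,1}$ over $[0,t/2]$; by Lemma \ref{mvpbpw9} this yields $(1+t)^{-1/2}\ln(1+t)\,B_1$ -- a logarithmic loss that does not close the ansatz. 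The difficulty you flagged is real, but the fix is not the Liu--Yu intermediate-field cancellation between the $\beta_i$: it is an algebraic rewriting of $P_0H$ using the macroscopic conservation structure. The paper uses $n=e^{-\Phi}-1+\partial_{xx}\Phi$ and $\dx m_1=-\partial_t n$ to write $P_0H=H_1+\dx H_2+\partial_t H_3$ (equation \eqref{P0H}), where $H_1$ is quadratic in $\Phi,\partial_t\Phi,\partial_{xx}\Phi$ and hence decays at rate $(1+s)^{-2}$ (so $T^{3/2}(t)=O(1)$, no log), while the derivatives on $H_2,H_3$ are transferred to $G_1$ by integration by parts in $x$ or $t$, producing $I^{(2+\alpha)/2,1,1}$-type integrals that close. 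Without this decomposition the bootstrap estimate \eqref{NL5} cannot be established along your route.

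A secondary gap: your ansatz $M(T)$ is purely pointwise, but the estimate of $\partial_{v_1}f$ proceeds through a kinetic transport representation whose forcing contains $\dx\Phi\,\partial_{v_1}^2 f$, and the paper controls $\|\partial_{v_1}^2f\|_{L^\infty_{v,1}}$ not pointwise but through the weighted energy functional $H_{4,3}(f)$ (via Sobolev embedding in $v$), bounded in turn by Lemma \ref{mvpbpw6} plus Gronwall. This is why the paper's bootstrap quantity $Q(t)$ explicitly includes $(1+s)^{3/4}\sqrt{H_{4,3}(f)(s)}$. You should incorporate a decaying energy component into the ansatz, otherwise the $\partial_{v_1}f$ estimate will not close; "the standard energy method gives a uniform $H^4_3$ bound" is not enough, because you need the time decay of the weighted dissipation norm to absorb the $\partial_{v_1}^2f$ term.
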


The results in Theorem \ref{mvpbth1} on the pointwise behavior of the Green's function $G$ to the mVPB system \eqref{mVPB8}--\eqref{mVPB9} is proved based on the spectral analysis \cite{LI-2} and the ideas inspired by \cite{LI-5,LIU-2,LIU-3}. First, we estimate the Green's function $G$ inside the Mach region $|x|\leq6t$ based on the spectral analysis. Indeed, we decompose the Green's function $G $ into the lower frequency part $G_L$ and the high frequency part $G_H$, and further split $G_L$ into the fluid parts $G_{L,0}$ and the non-fluid parts $G_{L,1}$ respectively, namely
$$
\left\{\bln
&G=G_L+G_H,\\
&G_L=G_{L,0}+G_{L,1}.
\eln\right.
$$

By using Fourier analysis techniques, we can show that the low-frequency fluid part $G_{L,0}(t,x)$ is smooth and contains Huygens waves and diffuse waves for $|x|\leq6t$ since the Fourier transform of the linear mVPB operator $\hat{B}(\eta)$ has five eigenvalues $\{\lambda_j(\eta),\ j=-1,0,1,2,3\}$ at the low frequency region $|\eta|\leq r_0$ satisfying
\be
\lambda_j(\eta)=-i\beta_j\eta-a_j\eta^2+O(\eta^3),\quad a_j>0.
\ee

Then, we apply a Picard's iteration to estimate the Green's function $G$ outside the Mach region $|x|>6t$.
Since $\hat{G}(t,\eta)$ does not belong to $L^1_{\eta}(\R)$,  $G(t,x)$ can be decomposed into the singular waves and the bounded remainder part. To exact the singular part from $G(t,x)$, we defined the approximate sequence $\{\hat{J}_{k},\ k\ge 0\}$ for the Green's function $\hat{G}(t,\eta)$, where $\hat{J}_{k}$ can be represented by the combination of the mixture operator $\hat{\mathbb{M}}_{k}^t(\eta)$. From the Mixture lemma (refer to \cite{LI-5,LIU-2}), $\hat{\mathbb{M}}^t_k(\eta)$ is analytic in $ \{\eta\in\C\,|\, |{\rm Im}\lambda|\le \delta_0\}$ and satisfies
$$
\|\hat{\mathbb{M}}^t_{3k}(\eta)\|\leq C_k(1+t)^{3k}(1+|\eta|)^{-k}e^{- \nu_0t}.
$$

This together with Cauchy theorem implies that  the approximate solution $J_{6}(t,x)$ is bounded and satisfies
\be
\|J_{6}(t,x)\|\leq Ce^{-\frac{\nu_{0}(|x|+t)}{4}}. \label{J6}
\ee

We define the singular part $W_{k}(t,x)$ as
$$
W_{k}(t,x)=\sum^{3k}_{i=0}J_i(t,x).
$$

Thus, \eqref{J6} implies that the remainder part $R_2(t,x)=G(t,x)-W_2(t,x)$ is bounded for all $(t,x)$. By choosing the weighted function as $w=e^{\epsilon(|x|-Yt)}$ and using the fact that for any $f=f(x)$ and $\delta\in (0,2)$ (refer to Lemma \ref{mvpbgf6}),
$$\int_{\mathbb{R}}|\partial^\alpha_x(I-\partial_{xx})^{-1}f|^2e^{\delta|x|}dx\leq \frac4{(2-\delta)^2}\int_{\mathbb{R}}|f|^2e^{\delta|x|}dx, $$
we can show by the weighted energy method that the remainder part $R_{2}(t,x)=G(t,x)-W_2(t,x)$ satisfies (refer to Lemma \ref{mvpbgf7})
$$
\|G(t,x)-W_{2}(t,x)\|\leq Ce^{-\frac{|x|+t}{D}}.
$$

Applying the above decompositions and estimates, we can obtain the decomposition and pointwise space-time behavior for each part of the Green's function $G(t,x)$ as listed Theorem \ref{mvpbth1}.

Finally, by using the estimates of the Green's functions in Theorem \ref{mvpbth1}, the energy estimate in Lemmas \ref{mvpbpw5}--\ref{mvpbpw6} and the estimates of waves coupling in Lemmas \ref{mvpbpw8}--\ref{mvpbpw12}, one can establish the pointwise space-time estimate on the global solution to the nonlinear mVPB system given in Theorem \ref{mvpbth2}.

The rest of this paper is organized as follows. In section \ref{sect2}, we present results about the spectrum analysis of the linear operator related to the linearized mVPB system. In section \ref{sect3}, we establish the pointwise space-time estimates of the Green's function to the linearized mVPB system. In section \ref{sect4}, we prove the pointwise space-time estimates of the global solution to the original nonlinear mVPB system.

%%%%%%%%%%%%%%%%%%%%%%%%%%%%%%%%%%%%%%%%%%%%%%%%%%%%%%%%%%%%%%%%%%%%%%%%%%%%%%%%%%%%%%%%%%%%%%%%%%%%%%%%%%%%%%%%%%%%%%%%%%%%%%%%%%%%%%%%%%%%%%%%%%%%%%%
%                                                                                                                                                     %
%                                                                                                                                                     %
%         SPECTRAL ANALYSIS                                                                                                                           %
%                                                                                                                                                     %
%                                                                                                                                                     %
%%%%%%%%%%%%%%%%%%%%%%%%%%%%%%%%%%%%%%%%%%%%%%%%%%%%%%%%%%%%%%%%%%%%%%%%%%%%%%%%%%%%%%%%%%%%%%%%%%%%%%%%%%%%%%%%%%%%%%%%%%%%%%%%%%%%%%%%%%%%%%%%%%%%%%%

\section{Spectral analysis}
\label{sect2}
\setcounter{equation}{0}

In this section, we show the spectrum structure of
the linearized  mVPB  operator~\eqref{B(s)} and analyze the analyticity and expansion of the eigenvalues and eigenfunctions in order to study the pointwise estimate of the Green's function.

First, we take the Fourier transform in \eqref{LmVPB} with respect to $x$ to have
 \bq
 \left\{\bln
 &\dt \hat{G}=\hat{B}(\eta)\hat{G},  \quad t>0, \\
 &\hat{G}(0,\eta)=1(\eta)I_v,  \quad (\eta,v)\in \R^1\times \R^3, \label{mVPB7}
 \eln\right.
 \eq
where the operator $\hat{B}(\eta)$ is defined by
 \bq
\hat{B}(\eta) =L-i v_{1}\eta -\frac{i v_{1}\eta }{1+\eta^2}P^1_{0}.  \label{B(s)}
 \eq

Introduce the weighted Hilbert space $L^2_{\eta}(\R^3_v)$ as
$$
 L^2_{\eta}(\R^3)=\{f\in L^2(\R^3_v)\,|\,\|f\|_{\eta}=\sqrt{(f,f)_{\eta}}<\infty\},
$$
equipped with the inner product
$$
 (f,g)_{\eta}=(f,g)+\frac1{1+\eta^2}(P^1_0 f,P^1_0 g).
$$

 Let $\sigma(\hat{B}(\eta))$ and $\rho(\hat{B}(\eta))$ denote the spectrum set and the resolvent set of the operator $\hat{B}(\eta)$ respectively. First, we have the following  results about the spectrum set and the resolvent set of  $\hat{B}(\eta)$.

\begin{lem}[\cite{LI-2}]\label{mvpbsp0}
The following results hold.\\
(1) For any $\delta>0$ and all $\eta\in\R$, there exists $y_1(\delta)>0$ such that
\be
\rho(\hat{B}(\eta))\supset\{\lambda\in\mathbb{C}\,|\,\mathrm{Re}\lambda\geq-\nu_0+\delta,\,|\mathrm{Im}\lambda|\geq y_1\}\cup\{\lambda\in\mathbb{C}\,|\,\mathrm{Re}\lambda>0\}.
\ee
(2) For any $r_0>0$, there exists $\alpha=\alpha(r_0)>0$ such that for $|\eta|\geq r_0$,
\be
\sigma(\hat{B}(\eta))\subset\{\lambda\in\mathbb{C}\,|\,\mathrm{Re}\lambda<-\alpha\}.
\ee
(3) For any $\delta>0$, there exists $r_1(\delta)>0$ such that for $ |\xi|\leq r_1$,
  \bq
\sigma(\hat{B}(\eta))\cap\{\lambda\in\mathbb{C}\,|\,\mathrm{Re}\lambda\ge-\mu/2\}\subset
\{\lambda\in\mathbb{C}\,|\,|\lambda|<\delta\}.\label{m_eigen1}
\eq
\end{lem}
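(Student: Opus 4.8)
The plan is to regard $\hat{B}(\eta)$ as a compact perturbation of the free damped transport operator and to split the spectral analysis into three regimes: $\mathrm{Re}\lambda$ large (handled by dissipativity), $|\mathrm{Im}\lambda|$ or $|\eta|$ large (handled by a Neumann series), and $\lambda$ or $\eta$ confined to a fixed compact set (handled by local perturbation and compactness). First I would write $\hat{B}(\eta)=A(\eta)+C(\eta)$ with $A(\eta)f=-\nu(v)f-iv_1\eta f$ and $C(\eta)f=Kf-\frac{i\eta}{1+\eta^2}(f,\chi_0)\chi_1$. Since $A(\eta)$ is multiplication by a function with real part $-\nu(v)\le-\nu_0$, it generates a contraction semigroup and $\sigma(A(\eta))\subset\{\lambda\in\C\,:\,\mathrm{Re}\lambda\le-\nu_0\}$; since $K$ is compact and the remaining term of $C(\eta)$ is a bounded operator of rank one, $C(\eta)$ is compact, so $\hat{B}(\eta)$ generates a $C_0$-semigroup and, by Weyl's theorem, $\sigma_{\mathrm{ess}}(\hat{B}(\eta))=\sigma_{\mathrm{ess}}(A(\eta))\subset\{\mathrm{Re}\lambda\le-\nu_0\}$. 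Thus on $\{\mathrm{Re}\lambda>-\nu_0\}$ the spectrum of $\hat{B}(\eta)$ consists only of isolated eigenvalues of finite algebraic multiplicity; this structural fact is used repeatedly below.

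For the dissipativity step I would compute $\mathrm{Re}(\hat{B}(\eta)f,f)_\eta$ in the weighted space $L^2_\eta(\R^3_v)$: using $L\chi_0=0$, $v_1\chi_0=\chi_1\perp\chi_0$ and $(v_1f,f)\in\R$, all the transport and Poisson contributions turn out to be purely imaginary, so that $\mathrm{Re}(\hat{B}(\eta)f,f)_\eta=(Lf,f)\le-\mu\|P_1f\|^2\le0$ by \eqref{L_3} — this is precisely why the inner product $(\cdot,\cdot)_\eta$ was introduced. Hence $e^{t\hat{B}(\eta)}$ is a contraction on $L^2_\eta$, so $\sigma(\hat{B}(\eta))\subset\{\mathrm{Re}\lambda\le0\}$ and every $\lambda$ with $\mathrm{Re}\lambda>0$ lies in $\rho(\hat{B}(\eta))$, which is the second half of (1). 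For the first half, on $\{\mathrm{Re}\lambda\ge-\nu_0+\delta\}$ one has $\|(\lambda-A(\eta))^{-1}\|\le\delta^{-1}$, so $\lambda\in\rho(\hat{B}(\eta))$ as soon as $\|(\lambda-A(\eta))^{-1}C(\eta)\|<1$. Both summands of $(\lambda-A(\eta))^{-1}C(\eta)$ reduce to controlling $\int_{\R^3}g(v)\,dv\big/\bigl(\delta^2+(\mathrm{Im}\lambda+v_1\eta)^2\bigr)$ with $g=\|k(v,\cdot)\|_{L^2_{v_*}}^2$ and $g=v_1^2M$, both in $L^1\cap L^\infty$ with rapid decay; a threshold splitting of the $v_1$-integral shows this quantity tends to $0$ as $|\mathrm{Im}\lambda|\to\infty$ uniformly in $\eta\in\R$, which yields $y_1=y_1(\delta)$. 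Letting instead $|\eta|\to\infty$ with $|\mathrm{Im}\lambda|$ bounded, the same estimate produces $R_0$ with $\sigma(\hat{B}(\eta))\cap\{\mathrm{Re}\lambda\ge-\nu_0+\delta\}=\emptyset$ for $|\eta|\ge R_0$; choosing $\delta=\nu_0/2$ gives (2) for $|\eta|\ge R_0$ with $\alpha=\nu_0/2$.

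It then remains to cover the band $r_0\le|\eta|\le R_0$, and this is where I expect the real work to lie. By the previous steps, for each such $\eta$ the spectrum in $\{\mathrm{Re}\lambda\ge-\nu_0+\delta\}$ is a finite set contained in $\{\mathrm{Re}\lambda\le0\}$, so it suffices to show that $\hat{B}(\eta)$ has no eigenvalue on $\{\mathrm{Re}\lambda=0\}$ when $\eta\ne0$ and then to make the resulting gap uniform in $\eta$. If $\hat{B}(\eta)f=i\tau f$ with $\tau\in\R$ and $f\ne0$, the dissipativity identity forces $P_1f=0$, hence $f=\sum_{j=0}^{4}c_j\chi_j\in N_0$; projecting the eigenvalue relation onto $N_0^\perp$ and using that $P_1(v_1\chi_1)$, $v_1\chi_2$, $v_1\chi_3$, $P_1(v_1\chi_4)$ are linearly independent (they are pairwise distinct polynomials in $v$ times $\sqrt{M}$, separated by parity and monomial type) forces $c_1=c_2=c_3=c_4=0$; then $\hat{B}(\eta)f=-ic_0\eta\bigl(1+\tfrac{1}{1+\eta^2}\bigr)\chi_1$ would have to be a scalar multiple of $\chi_0\perp\chi_1$, so $c_0=0$ and $f=0$, a contradiction. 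Since $\eta\mapsto\hat{B}(\eta)$ is analytic for real $\eta$ and, by (1), the relevant spectrum stays in the fixed compact rectangle $\{\mathrm{Re}\lambda\ge-\nu_0+\delta\}\cap\{|\mathrm{Im}\lambda|\le y_1\}$ and is finite there, upper semicontinuity of the spectral bound together with compactness of the band produces a uniform gap $\alpha_1(r_0)>0$; combined with the large-$\eta$ case this gives (2) with $\alpha=\min\{\alpha_1,\nu_0/2\}$. I expect this last point — converting the purely algebraic non-invariance of $N_0$ under multiplication by $v_1$ into a uniform-in-$\eta$ spectral gap through analytic perturbation theory and compactness — to be the main obstacle.

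Finally, for (3) I would specialize to $\eta=0$: $\hat{B}(0)=L$, and \eqref{L_3} gives the spectral gap $\sigma(L)\cap\{\mathrm{Re}\lambda\ge-\mu/2\}=\{0\}$, a five-fold eigenvalue with eigenspace $N_0$. For $\lambda$ in the compact set $\{\mathrm{Re}\lambda\ge-\mu/2\}\cap\{|\mathrm{Im}\lambda|\le y_1\}\cap\{|\lambda|\ge\delta\}$ — outside of which $\lambda\in\rho(\hat{B}(\eta))$ by (1), while $\{|\lambda|<\delta\}$ is exactly the asserted trap — the resolvent $(\lambda-L)^{-1}$ and the operator $(\lambda-L)^{-1}v_1$ (bounded since $v_1$ is $L$-bounded) are uniformly bounded, whereas $\hat{B}(\eta)-L=-iv_1\eta\,(I+\tfrac{1}{1+\eta^2}P^1_0)$ contributes a factor $O(|\eta|)$; hence $\|(\lambda-L)^{-1}(\hat{B}(\eta)-L)\|\le C(\delta)|\eta|<1$ for $|\eta|<r_1(\delta)$, so $\lambda\in\rho(\hat{B}(\eta))$, which proves \eqref{m_eigen1}.
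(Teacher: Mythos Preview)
The paper does not give its own proof of this lemma: it is stated with a citation to \cite{LI-2} and used as a black box, so there is no in-paper argument to compare against. Your proposal is the standard route (compact perturbation of the damped transport operator, dissipativity in the weighted inner product $(\cdot,\cdot)_\eta$, Neumann series for large $|\mathrm{Im}\lambda|$ or large $|\eta|$, exclusion of imaginary eigenvalues for $\eta\neq0$ via the microscopic projection of $v_1\chi_j$, and a compactness/continuity argument for the uniform gap), and it is essentially the approach carried out in \cite{LI-2}; the ingredients you invoke are exactly those the present paper later re-uses in the proof of Lemma~\ref{mvpbsp4} (cf.\ the resolvent decomposition \eqref{BQ2} and the decay estimates on $K(\lambda-c(\eta))^{-1}$ and $(v_1\eta)(1+\eta^2)^{-1}P^1_0(\lambda-c(\eta))^{-1}$ quoted from \cite{LI-2}). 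One small point worth tightening: in the uniform-gap step on $r_0\le|\eta|\le R_0$, appeal explicitly to the analyticity of $\eta\mapsto \hat{B}(\eta)$ (holomorphic family of type (A)) so that the finitely many eigenvalues in the rectangle $\{\mathrm{Re}\lambda\ge-\nu_0+\delta,\ |\mathrm{Im}\lambda|\le y_1\}$ vary continuously and no new eigenvalues enter from the boundary; bare ``upper semicontinuity of the spectral bound'' is not automatic for unbounded generators without this extra structure.
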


Then, we show the analyticity and expansion of the eigenvalues and  eigenfunctions to the operator $\hat{B}(\eta)$ at low frequency.

%%%%%%%%%%%%%%%%%%%%%%%%%%%%%%%%%%%%%%%%%%%%%%%%%%%%%%%%%%%%%%%%%%%%%%%线性玻尔兹曼算子的谱点定理%%%%%%%%%%%%%%%%%%%%%%%%%%%%%%%%%%%%%%%%%%%%%%%%%%%%%%%%%%%

\begin{lem} \label{mvpbsp1}

\iffalse
(1)
For any $r_1>0$  there
exists a constant $\alpha=\alpha(r_1)>0$ so that it holds for $|\eta|\geq r_1$,
\be
\sigma(\hat{B}(\eta))\subset\{\lambda\in\mathbb{C}\,|\, {\rm Re}\lambda<-\alpha\}. \label{sg1z}
\ee
\fi
(1) There exists a constant $r_0>0$ such that for $|\eta|\leq r_{0}$,
$$ \sigma(\hat{B}(\eta))\cap \{\lambda\in \C\,|\,{\rm Re}\lambda>-\mu/2\}=\{\lambda_j(\eta),\ j=-1,0,1,2,3\}.$$

In particular, the eigenvalues $\lambda_j(\eta)$ are analytic functions of $\eta$ and satisfy the following asymptotic expansion for $|\eta|\le r_0:$
\be                       \label{specr0}
\left\{\bln
&\lambda_{\pm1}(\eta)=\mp i\sqrt{\frac{8}{3}}\eta- a_{\pm1}\eta^{2}+O(\eta^{3}), \quad \overline{\lambda_1(\eta)}=\lambda_{-1}(\eta),\\
&\lambda_{0}(\eta)=-a_{0}\eta^{2}+O(\eta^{3}),\\
&\lambda_l(\eta)=-a_l\eta^2+O(\eta^3),\quad l=2,3,
\eln\right.
\ee
where $a_{j}>0,\ j=-1,0,1,2,3$ is defined by
\bq
\left\{\bln
&a_j=-(L^{-1}P_1v_1E_j,v_1E_j)>0, \\
&E_{\pm1}=\frac{\sqrt{3}}{4}\chi_0\pm\frac{\sqrt{2}}{2}\chi_1+\frac{\sqrt{2}}{4}\chi_4,\\
&E_0=\frac{\sqrt{2}}{4}\chi_{0}-\frac{\sqrt{3}}{2}\chi_{4},\\
&E_l=\chi_l,\quad l=2,3.
\eln\right.
\eq
(2) The eigenfunctions $\psi_j(\eta)=\psi_j(\eta,v)$ for $j=-1,0,1,2,3$ are analytic in $\{\eta\in\mathbb{C}\,|\,|\eta|\leq r_0\}$ and satisfy

\be\label{specr1}
\left\{\bln
&(\psi_j(\eta),\overline{\psi_k(\eta)})_{\eta}=\delta_{jk},\quad  j,k=-1,0,1,2,3,\\
&\psi_j=\psi_{j,0}+\psi_{j,1}\eta+O(\eta^{2}),
\eln\right.
\eq
where the coefficients $\psi_{j,0},\,\psi_{j,1}$ are given as
\bq\label{TH1}
\left\{\bln
&\psi_{j,0}=E_j, \quad j=-1,0,1,2,3,\\
&\psi_{l,1}=\sum^1_{k=-1}b^l_kE_k+iL^{-1}P_1v_1E_l,\quad l=-1,0,1,\\
&\psi_{k,1}=iL^{-1}P_1v_1E_k,\quad k=2,3,
\eln\right.
\eq
and $b^j_k,~j,k=-1,0,1$ are defined by
\bq
\left\{\bln
&b^j_j=0,\quad  b^j_k=\frac{i(L^{-1}P_1v_1E_j,v_1E_k)}{(\beta_j-\beta_k)},\quad j\neq k,\\
&\beta_{\pm1}=\mp\sqrt{\frac{8}{3}},\quad  \beta_{0}=0.
\eln\right.
\eq
\end{lem}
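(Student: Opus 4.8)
The plan is to treat $\hat B(\eta)$ as an analytic perturbation of the unperturbed operator $\hat B(0)=L$ near $\eta=0$ and invoke Kato's perturbation theory for finite systems of eigenvalues. First I would observe that $\hat B(\eta) = L - i\eta(v_1 + \frac{v_1}{1+\eta^2}P^1_0)$ depends analytically on $\eta$ in a complex neighborhood of the origin, since $\eta\mapsto \frac{\eta}{1+\eta^2}$ is analytic there and $v_1$, $P^1_0$ are bounded relative to $L$ in the sense needed. By part (3) of Lemma \ref{mvpbsp0}, for $\delta$ small enough the portion of $\sigma(\hat B(\eta))$ with $\mathrm{Re}\,\lambda \geq -\mu/2$ is confined to a small disc around $0$ once $|\eta|\leq r_1(\delta)$; combined with $0$ being an isolated eigenvalue of $L$ of multiplicity $5$ with eigenspace $N_0=\mathrm{span}\{\chi_0,\dots,\chi_4\}$ (from the spectral gap \eqref{L_3} and compactness of $K$), a fixed small circle $\Gamma$ around $0$ encloses exactly this $5$-dimensional group for all small $|\eta|$. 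Then the Riesz projection $P(\eta) = \frac{1}{2\pi i}\oint_\Gamma (\lambda - \hat B(\eta))^{-1}\,d\lambda$ is analytic in $\eta$, has constant rank $5$, and $\hat B(\eta)$ restricted to its range is a $5\times 5$ analytic matrix family; this gives the splitting $\sigma(\hat B(\eta))\cap\{\mathrm{Re}\,\lambda>-\mu/2\}=\{\lambda_j(\eta)\}$ and, after checking that the branch points are absent, the analyticity of each $\lambda_j$ and of a corresponding analytic eigenbasis.

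Next I would compute the expansions by first-order (and second-order) Rayleigh–Schrödinger perturbation theory on the reduced $5\times 5$ problem. Writing $\hat B(\eta) = L - i\eta B_1 + O(\eta^3)$ with $B_1 = v_1\cdot + v_1 P^1_0$ acting on $N_0$, the first-order correction is governed by the $5\times 5$ matrix $\bigl((B_1\chi_k,\chi_l)\bigr)$ restricted to $N_0$; since $P^1_0$ vanishes against $\chi_1,\dots,\chi_4$ and $v_1\chi_0 = \chi_1$, one finds the leading symbol is $-i\eta$ times the acoustic matrix whose eigenvalues are the sound speeds $0, \pm\sqrt{8/3}$ — this is where the vectors $E_j$ enter as the eigenvectors diagonalizing that matrix, and the value $\sqrt{8/3}$ comes out of the explicit moments of $M$ (it is $\sqrt{5/3}\cdot\sqrt{\cdots}$ adjusted by the modification term; the $P^1_0$ contribution at $\eta=0$ is what shifts the classical $\sqrt{5/3}$). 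The second-order correction along each branch produces the $-a_j\eta^2$ term with $a_j = -(L^{-1}P_1 v_1 E_j, v_1 E_j)$, positive because $L$ is negative definite on $\mathrm{Ran}\,P_1$ by \eqref{L_3}; the $O(\eta^3)$ remainders are automatic from analyticity. The eigenfunction expansion $\psi_j = \psi_{j,0} + \psi_{j,1}\eta + O(\eta^2)$ with $\psi_{j,0}=E_j$ and $\psi_{j,1}$ as in \eqref{TH1} follows from the standard formula $\psi_{j,1} = (\text{diagonal part in } N_0) + i L^{-1}P_1 v_1 E_j$, the $N_0$-component being fixed by the normalization $(\psi_j(\eta),\overline{\psi_k(\eta)})_\eta=\delta_{jk}$ and by solving the degeneracy-breaking condition among the three acoustic modes, which yields the coefficients $b^l_k = i(L^{-1}P_1v_1E_j,v_1E_k)/(\beta_j-\beta_k)$.

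The main obstacle I anticipate is twofold: first, justifying rigorously that the degenerate $5$-fold eigenvalue of $L$ splits into $5$ genuinely analytic (not merely continuous or Puiseux) branches — this requires checking that the reduced matrix family has no exceptional points on the real axis near $0$, which here works because the first-order symbol $-i\eta\,(\text{acoustic matrix})$ already has simple eigenvalues $0,\pm\sqrt{8/3}$ except for the $\eta$-independent double eigenvalue $0$ coming from $E_2,E_3$ (the transverse modes), and those two remain decoupled from the rest to all orders by the rotational/parity structure, so no square-root branching occurs. Second, one must handle the non-self-adjointness introduced by the skew term $-i\eta B_1$ and by the $\eta$-dependent inner product $(\cdot,\cdot)_\eta$: the eigenfunctions are bi-orthonormal rather than orthonormal, so I would work with the pair $(\psi_j, \overline{\psi_k})_\eta$ throughout and verify the relations \eqref{specr1} order by order, using that $\hat B(\eta)$ is self-adjoint with respect to $(\cdot,\cdot)_\eta$ up to the skew-symmetric first-order part — a point already implicit in the construction of the weighted space $L^2_\eta(\R^3_v)$. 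Both difficulties are handled by the standard Kato machinery once the reduced problem is written down, so the bulk of the work is the explicit but routine moment computations that pin down $\sqrt{8/3}$ and the $E_j$'s.
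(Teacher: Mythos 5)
Your proposal is correct, but it takes a genuinely different route from the paper. The paper's proof does not invoke Riesz projections or Kato's analytic perturbation machinery at all. Instead it performs a Feshbach--Schur type reduction by hand: it splits $\psi = g_0 + g_1$ with the macro--micro projections $P_0,P_1$, solves the micro equation for $g_1 = i\eta(L+i\eta\sigma-i\eta P_1v_1)^{-1}P_1v_1g_0$, substitutes back, and then writes the resulting finite-dimensional eigenvalue problem in the rescaled variable $\sigma = i\lambda/\eta$. The rescaling is the paper's replacement for your ``first-order acoustic matrix'' step: at $\eta=0$ the reduced problem becomes $\sigma g_0 = A(0)g_0$ whose five roots $u_j(0)=0,0,0,\pm\sqrt{8/3}$ are already separated into the transverse pair and the three acoustic modes. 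The paper then applies the implicit function theorem directly to the characteristic determinants $D_0(\sigma,\eta)=0$ and $D_1(\sigma,\eta)=0$ (Lemmas 2.3--2.4) to obtain analytic branches $\sigma_j(\eta)$, and constructs the eigenfunctions explicitly via \eqref{EBJ}, \eqref{eta} with the normalization fixed through the $(\cdot,\cdot)_\eta$-bi-orthogonality relation $(\lambda_j-\lambda_k)(\psi_j,\overline{\psi_k})_\eta=0$. Your Kato/Rayleigh--Schr\"odinger route reaches the same expansions and is standard, with two advantages (it is systematic and the analyticity of the Riesz projection is a known theorem) and two disadvantages relative to the paper's route: you must argue separately that the relatively unbounded skew perturbation $-i\eta v_1$ still produces an analytic family in Kato's sense, and you must be more careful about the transverse modes $j=2,3$, which do not split at any order but remain an exactly doubly degenerate branch $\lambda_2(\eta)\equiv\lambda_3(\eta)$; the analyticity there comes not from ``decoupling'' per se but from the fact that the corresponding two-dimensional spectral subspace is analytic and the eigenvalue on it is a multiple of the identity, which the paper makes transparent by reducing the transverse block to the scalar equation $D_0(\sigma,\eta)=0$. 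Both approaches buy the same conclusion; the paper's is more elementary and self-contained, yours generalizes more readily.
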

\begin{proof}

%The proof of (1) is given by \cite{20}. Hence, we only prove (2) and (3).
We consider the eigenvalue problem in the form
$$
\hat{B}(\eta)\psi=\lambda \psi,
$$
that is
\be
\(L-iv_1\eta -\frac{iv_1\eta }{1+\eta^2}P^1_0\)\psi=\lambda\psi. \label{spj1}
\ee

By the macro-micro decomposition, the eigenfunction $\psi$ of \eqref{spj1} can be divided into
$$
\psi=P_0\eta+P_1\eta=g_0+g_1.
$$

Let $\lambda=-i\eta\sigma$. Hence, \eqref{spj1} gives
\bma
&-i\eta\sigma g_0=-P_0[(iv_1\eta )(g_0+g_1)]-\frac{iv_1\eta }{1+\eta^2}P^1_0g_{0},\label{e0}\\
&-i\eta\sigma g_1=Lg_1-P_1[iv_1\eta (g_0+g_1)].\label{e1}
\ema

According to \eqref{e1}, $g_1$ can be represented by
\bq
g_1=i\eta(L+i\eta\sigma -i\eta P_1v_1)^{-1}P_{1}v_1g_0.\label{e1b}
\eq

By substituting \eqref{e1b} into \eqref{e0}, we have
\bq
\sigma g_0=P_0v_1g_0+\frac{v_1}{1+\eta^2}P^1_0g_0+i\eta P_0[v_1(L+i\eta\sigma-i\eta P_1v_1)^{-1}P_1v_1g_{0}].\label{e0b}
\eq

Define the operator $A(\eta)=P_0v_1P_0+\frac{v_1}{1+\eta^2}P^1_0$. We have matrix representation of $A(\eta)$ as follows
\bq
A(\eta)=\left(
  \begin{array}{ccccc}
    0 & 1 & 0 & 0 & 0 \\
   1+\frac{1}{1+\eta^2} & 0 & 0 & 0 & \sqrt{\frac{2}{3}} \\
    0 & 0 & 0 & 0 & 0 \\
    0 & 0 & 0 & 0 & 0 \\
    0 & \sqrt{\frac{2}{3}} & 0 & 0 & 0 \\
  \end{array}
\right).
\eq

It can be verified that the eigenvalue $u_j(\eta)$ and  eigenvectors $E_j(\eta)$ of $A(\eta)$ are given by
\bq\label{EUJ}
\left\{\bln
&u_{\pm1}(\eta)=\mp\sqrt{\frac{5}{3}+\frac{1}{1+\eta^2}},\quad u_j(\eta)=0, \quad j=0,2,3,\\
&E_{\pm1}(\eta)=\frac{1}{\sqrt{\frac{10}{3}+\frac{2}{1+\eta^2}}}\chi_0\mp\frac{\sqrt{2}}{2}\chi_1+\frac{1}{\sqrt{5+\frac{3}{1+\eta^2}}}\chi_4,\\
&E_0(\eta)=\frac{\sqrt{\frac{2}{3}}}{\sqrt{\frac{2}{3}(1+\frac{1}{1+\eta^2})+(1+\frac{1}{1+\eta^2})^2}}\chi_0-\frac{\sqrt{1+\frac{1}{1+\eta^2}}}{\sqrt{\frac{5}{3}+\frac{1}{1+\eta^2}}}\chi_4,\\
&E_k(\eta)=\chi_k,\quad k=2,3,\\
&(E_j(\eta),E_k(\eta))_{\eta}=\delta_{jk},\quad -1\leq j,k\leq3.
\eln\right.
\eq

Moreover, we denote $E_j=E_j(0)$ $(j=-1,0,1,2,3)$.

To solve the eigenvalue problem \eqref{e0b}, we write $g_0\in N_0$ in terms of the basis $E_j$ as
$$
g_0=\sum^{4}_{j=0}W_{j}E_{j-1}(\eta) \quad \text{with}\quad W_{j}=(\eta_0,E_{j-1}(\eta))_{\eta},\quad j=0,1,2,3,4.
$$

Taking the inner product $(\cdot,\cdot)_{\eta}$ between \eqref{e0b} and $E_j(\eta)$~$(j=-1,0,1,2,3)$ respectively, we obtain the equations about
$\sigma$ and $(W_0,W_1,W_2,W_3,W_4)$ for $-\mathrm{Re}(i\eta\sigma)>-\mu$:
\bq\label{WRJ}
\sigma W_j =u_{j-1}(\eta)W_j +i\eta \sum^4_{k=0}W_k R_{kj}(\sigma,\eta), \quad j=0,1,2,3,4,
\eq
where
\bq\label{EJJ}
R_{kj}(\sigma,\eta)=((L+i\eta\sigma-i\eta P_1v_1)^{-1}P_1v_1E_{k-1}(\eta),v_1E_{j-1}(\eta)).
\eq

In particular, it holds that
\bq\label{RJJ}
\left\{\bln
&R_{kl}(\sigma,\eta)=R_{lk}(\sigma,\eta)=0,\,\,\,k=0,1,2,\,\, l=3,4,\\
&R_{34}(\sigma,\eta)=R_{43}(\sigma,\eta)=0,\\
&R_{33}(\sigma,\eta)=R_{44}(\sigma,\eta).
\eln\right.
\eq

By \eqref{EJJ} and \eqref{RJJ}, we can divide \eqref{WRJ} into two systems:
\bma
\sigma W_j&=u_{j-1}(\eta)W_j+i\eta\sum^{2}_{i=0}W_iR_{ij}(\sigma,\eta),\quad j=0,1,2,\label{XJ1}\\
\sigma W_k&=i\eta W_kR_{33}(\sigma,\eta),\quad k=3,4.
\ema

Denote
\bma
D_0(\sigma,\eta)&=\sigma-i\eta R_{33}(\sigma,\eta),\label{D0}\\
D_1(\sigma,\eta)&=
\det\left(
\begin{array}{ccc}
\sigma-u_{-1}-i\eta R_{00}&-i\eta R_{10}&-i\eta R_{20}\\
-i\eta R_{01}&\sigma-u_0-i\eta R_{11}&-i\eta R_{21}\\
-i\eta R_{02}&-i\eta R_{12}&\sigma-u_1-i\eta R_{22}\\
\end{array}
\right).\label{D1}
\ema

The eigenvalues $\lambda=-i\eta\sigma$ can be solved by $D_0(\sigma,\eta)=0$ and $D_1(\sigma,\eta)=0$. By a direct computation and the implicit function theorem \cite{homo}, we can show
\begin{lem}\label{mvpbsp2}
 There exist two  constants $r_0,r_1>0$  so that the equation $D_0(\sigma,\eta)=0$ has a unique analytic solution $\sigma=\sigma(\eta)$ for $(\eta,\sigma)\in[-r_0,r_0]\times B_{r_1}(0)$ that satisfies
\bq
\sigma(0)=0,\quad \sigma'(0)=i(L^{-1}P_1v_1E_2,v_1E_2).
\eq
\end{lem}
We have the following result about the solution of $D_1(\sigma,\eta)=0$.

\begin{lem}\label{mvpbsp3}
There exist two constants $r_0,r_1>0$  so that the equation $D_1(\sigma,\eta)=0$ admits three analytic solutions $\sigma_j(\eta)\,(j=-1,0,1)$ for $(\eta,\sigma_j)\in[-r_0,r_0]\times B_{r_1}(u_j(0))$ that satisfy
\bq
\sigma_j(0)=u_j(0),\quad \sigma'_{j}(0)=i(L^{-1}P_1v_1E_j,v_1E_j).\label{si1}
\eq

Moreover, $\sigma_j(\eta)$ satisfies
\bq
-\sigma_j(-s)=\overline{\sigma_j(\eta)}=\sigma_{-j}(\eta),\quad j=-1,0,1.\label{si2}
\eq
\end{lem}
\begin{proof}
From \eqref{D1}, it holds that
\bq
\begin{split}
D_1(\sigma,0)&=
\det\left(
\begin{array}{ccc}
\sigma-u_{-1}(0)&0&0\\
0&\sigma-u_0(0)&0\\
0&0&\sigma-u_1(0)\\
\end{array}
\right)\\
&=(\sigma-u_{-1}(0))(\sigma-u_0(0))(\sigma-u_1(0)).
\end{split}
\eq

This implies that $D_1(\sigma,0)=0$ has three roots $\sigma_j=u_j(0)$ for $j=-1,0,1$. Since
\bmas
\partial_{\eta}D_1(\sigma,0)&=-iR_{00}(\sigma,0)(\sigma-u_0(0))(\sigma-u_1(0))\nnm\\
&\quad-iR_{11}(\sigma,0)(\sigma-u_{-1}(0))(\sigma-u_1(0))\nonumber\\
&\quad-iR_{22}(\sigma,0)(\sigma-u_{-1}(0))(\sigma-u_0(0)),\\
\partial_{\sigma}D_1(\sigma,0)&=(\sigma-u_0(0))(\sigma-u_1(0))+(\sigma-u_{-1}(0))(\sigma-u_1(0))\nnm\\
&\quad+(\sigma-u_0(0))(\sigma-u_1(0)),
\emas
it follows that
$$
\partial_{\sigma}D_1(u_j(0),0)\neq0.
$$

The implicit function theorem implies \cite{homo} that there exist two small constants $r_0,r_1>0$ and a unique analytic function $\sigma_j(\eta):[-r_0,r_0]\rightarrow B_{r_1}(u_j(0))$ so that $D_1(\sigma_j(\eta),\eta)=0$ for $\eta\in[-r_0,r_0]$, and in particular
\bq
\sigma_j(0)=u_j(0),\quad \sigma_j'(0)=-\frac{\partial_{\eta}D_1(u_j(0),0)}{\partial_{\sigma}D_1(u_j(0),0)}=iR_{jj}(u_j(0),0),\quad  j=-1,0,1.
\eq

This proves \eqref{si1}.

Since $R_{ij}(\beta,-\eta)=R_{ij}(-\beta,\eta)$, $R_{ij}(\overline{\beta},\eta)=\overline{R_{ij}(\beta,\eta)}$ for $i,j=0,1,2$, we obtain by \eqref{D1} that $D_1(-\sigma,\eta)=D_1(\sigma,-\eta)$ and $D_1(\overline{\sigma},\eta)=\overline{D_1(\sigma,\eta)}$. This together with the fact that $\sigma_j(\eta)=u_j(0)+O(\eta)$ for $j=-1,0,1$ and $|\eta|\leq r_0$, imply \eqref{si2}.
\end{proof}

The eigenvalues $\lambda_j(\eta)$ and the eigenfunctions $\psi_j(\eta)$ for $j=-1,0,1,2,3$ can be constructed as follows. For $j=2,3$, we take $\lambda_j=-i\eta\sigma(\eta)$ with $\sigma(\eta)$ being the solution to the equation $D_0(\sigma,\eta)=0$ defined in Lemma \ref{mvpbsp2}. Thus the corresponding eigenfunctions $\psi_j(\eta),\,j=2,3$ are defined by
\bq\label{EBJ}
\psi_j(\eta)=b_j(\eta)E_j(\eta)+i\eta b_j(\eta)[L-\lambda_j-i\eta P_1v_1]^{-1}P_1v_1E_j(\eta).
\eq

It is easy to verify that $\psi_2$ and $\psi_3$ are orthonormal, i.e., $(\psi_2(\eta),\overline{\psi_3(\eta)})_{\eta}=0$.

For $j=-1,0,1$, we take $\lambda_j=-i\eta\sigma_j(\eta)$ with $\sigma_j(\eta)$ being the solutions to the equation $D_1(\sigma,\eta)=0$ given in Lemma \ref{mvpbsp3}. Denote by $\{W_0^j,W_1^j,W_2^j\}$ a solution system \eqref{XJ1} for $\sigma=\sigma_j(\eta)$. Then we can construct the eigenfunctions $\psi_j(\eta),\, j=-1,0,1$ as
\bq\label{eta}
\left\{\bln
&\psi_j(\eta)=P_0\psi_j(\eta)+P_1\psi_j(\eta),\\
&P_0\psi_j(\eta)=W^j_0(\eta)E_{-1}(\eta)+W^j_1(\eta)E_0(\eta)+W^j_2(\eta)E_1(\eta),\\
&P_1\psi_j(\eta)=i\eta[L-\lambda_j-i\eta P_1v_1]^{-1}P_1v_1P_0\psi_j(\eta).
\eln\right.
\eq

We write
\bq\label{etaj1}
\(L-iv_1\eta -\frac{iv_1\eta }{1+\eta^2}P^1_0\)\psi_j(\eta)=\lambda_j(\eta)\psi_j(\eta),\quad -1\leq j\leq3.
\eq

Taking the inner product $(\cdot,\cdot)_{\eta}$ between  \eqref{etaj1} and $\overline{\psi_j(\eta)}$, and using the facts that
\bq
\begin{split}
&(\hat{B}(\eta)f,g)_{\eta}=(f,\hat{B}(-\eta)g)_{\eta},\quad f,g\in D(\hat{B}(\eta)),\\
&\hat{B}(-\eta)\overline{\psi_j(\eta)}=\overline{\lambda_j(\eta)}\cdot\overline{\psi_j(\eta)},
\end{split}
\eq
we have
$$
(\lambda_j(\eta)-\lambda_k(\eta))(\psi_j(\eta),\overline{\psi_k(\eta)})_{\eta}=0,\quad -1\leq j\neq k\leq3.
$$

For $\eta\neq0$ being sufficiently small, $\lambda_j(\eta)\neq\lambda_k(\eta)$ for $-1\leq j\neq k\leq2$. Therefore, we have
$$
(\psi_j(\eta),\overline{\psi_k(\eta)})_{\eta}=0,\quad -1\leq j\ne k\leq3.
$$

We can normalize them by taking $(\psi_j(\eta),\overline{\psi_j(\eta)})_{\eta}=1$ for $-1\leq j\leq3$.

The coefficients $b_j(\eta)$ for $j=2,3$ defined in \eqref{EBJ} are determined by the normalization condition as
\bq
b_j(\eta)^2(1+\eta^2D_j(\eta))=1,\label{EBJ1}
\eq
where
$$
D_j(\eta)=\((L-i\eta P_1v_1-\lambda_j(\eta))^{-1}P_1v_1E_j,(L+i\eta P_1v_1-\overline{\lambda_j(\eta)})^{-1}P_1v_1E_j\).
$$

Substituting \eqref{specr0} into \eqref{EBJ1}, we obtain
$$
b_j(\eta)=1-\frac{1}{2}\eta^2\|L^{-1}P_1v_1E_j\|+O(\eta^3).
$$

This and \eqref{EBJ} give the expansion of $\psi_j(\eta)$ for $j=2,3$, stated in \eqref{TH1}.

To obtain expansion of $\psi_j(\eta)$~$(j=-1,0,1)$ defined in \eqref{eta}, we deal with its macroscopic part and microscopic part respectively. By \eqref{XJ1}, the macroscopic part $P_0\psi_j(\eta)$~$(j=-1,0,1)$ is determined in terms of the coefficients $$\{W_0^j(\eta),W_1^j(\eta),W_2^j(\eta)\}$$ that satisfy
\bq\label{SW1}
\sigma_j(\eta) W^j_k=u_{k-1}(\eta)W^j_k+i\eta\sum^{2}_{l=0}W_l^jR_{lk}(\sigma,\eta),\quad k=0,1,2.
\eq

Furthermore, we have the normalization condition:
\bq\label{SW2}
1\equiv(\psi_j(\eta),\overline{\psi_j(\eta)})_{\eta}=W_0^j(\eta)^2+W_1^j(\eta)^2+W_2^j(\eta)^2+O(\eta^2),\quad |\eta|\leq r_0.
\eq

%Note that $u_j(\eta)=u_j(0)+O(\eta^2),$ $j=-1,0,1$, and assume that
We expand $u_j(\eta)$ and $W_k^j(\eta)$ with $j=-1,0,1$ and $k=0,1,2$ as
$$
u_j(\eta)=u_j(0)+O(\eta^2),\quad
W_k^j(\eta)=\sum_{n=0}^1W_{k,n}^j\eta^n+O(\eta^2).
$$

Substituting the above expansion and \eqref{specr0} into \eqref{SW1} and \eqref{SW2}, we have %their expressions as
\bma
O(1)&\qquad \qquad\left\{\bal
u_j(0)W_{k,0}^j=u_{k-1}(0)W^j_{k,0},\\
(W^j_{0,0})^2+(W^j_{1,0})^2+(W^j_{2,0})^2=1,
\ea\right.\label{WJ1}
\\
O(\eta)&\qquad \qquad\left\{\bal
u_j(0)W_{k,1}^j+ia_{j}W_{k,0}^j=u_{k-1}(0)W_{k,1}^j+i\sum^2_{l=0}W^j_{l,0}C_{l,k},\\
W^j_{0,0}W^j_{0,1}+W^j_{1,0}W^j_{1,1}+W^j_{2,0}W^j_{2,1}=0,
\ea\right.\label{WJ2}
\ema
where $j=-1,0,1$, $k=0,1,2$ and
$$
C_{l,n}=(L^{-1}P_1v_1E_{l-1}, v_1E_{n-1}),\quad l,n=0,1,2.
$$

By straightforward computation, we can obtain from \eqref{WJ1} and \eqref{WJ2} that
\bq\label{WJ3}
\left\{\bal
W^j_{j+1,0}=1,\quad W^j_{k,0}=0,\quad k\neq j+1,\\
W^j_{j+1,1}=0, \quad  W^j_{k,1}=\frac{iC_{j+1,k}}{u_j(0)-u_{k-1}(0)},\quad  k\neq j+1.
\ea\right.
\eq

By \eqref{eta} and \eqref{WJ3}, we obtain the expansion of $\psi_j(\eta)$~$(j=-1,0,1)$ given in \eqref{TH1}. The proof of this lemma is completed.
\end{proof}

%%%%%%%%%%%%%%%%%%%%%%%%%%%%%%%%%%%%%%%%%%%%%%%%%%%%%%%%%%%5线性算子的半群定理%%%%%%%%%%%%%%%%%%%%%%%%%%%%%%%%%%%%%%%%%%%%%%%%%%%%%%%%%%%%%%%%%%

Then, we have the decomposition of the semigroup $S(t,\eta)=e^{t\hat{B}(\eta)}$ $(S(t,\eta)=\hat{G}(t,\eta))$ as below.
\begin{lem}\label{mvpbsp4}
The semigroup $S(t,\eta)=e^{t\hat{B}(\eta)}$ has the following decomposition:
\bq
S(t,\eta)f=S_1(t,\eta)f+S_2(t,\eta)f,\quad f\in L^2(\mathbb{R}^3_v),\,\, t>0,\label{Semi0}
\eq
where
\bq
S_1(t,\eta)f=\sum^{3}_{j=-1}e^{t\lambda_j(\eta)}(f,\overline{\psi_j(\eta)})_{\eta}\psi_j(\eta)1_{\{|\eta|\leq r_0\}},
\eq
and $S_2(t,\eta)f=S(t,\eta)f-S_1(t,\eta)f$ satisfies
\bq
\|S_2(t,\eta)f\|_{\eta}\leq Ce^{-\alpha_0t}\|f\|_{\eta}, \label{Semi01}
\eq
for $\alpha_0>0$ and $C>0$ two constants independent of $\eta$.
\end{lem}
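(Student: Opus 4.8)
The plan is to build $S_1(t,\eta)$ as the spectral projection onto the five low-frequency eigenspaces and then use a resolvent/contour argument together with Lemma \ref{mvpbsp0} to control the complementary part $S_2(t,\eta)$. First I would fix $r_0>0$ from Lemma \ref{mvpbsp1}(1) so that for $|\eta|\le r_0$ the only spectrum of $\hat B(\eta)$ in $\{\mathrm{Re}\,\lambda>-\mu/2\}$ consists of the simple eigenvalues $\lambda_j(\eta)$, $j=-1,0,1,2,3$, with eigenfunctions $\psi_j(\eta)$ normalized by $(\psi_j(\eta),\overline{\psi_k(\eta)})_\eta=\delta_{jk}$ as in Lemma \ref{mvpbsp1}(2). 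Since $\hat B(-\eta)$ is the adjoint of $\hat B(\eta)$ with respect to $(\cdot,\cdot)_\eta$, the spectral projection associated with $\lambda_j(\eta)$ is exactly $P_j(\eta)f=(f,\overline{\psi_j(\eta)})_\eta\,\psi_j(\eta)$, and one checks $P_j(\eta)^2=P_j(\eta)$, $P_j(\eta)P_k(\eta)=0$ for $j\ne k$ using the biorthogonality relation. Setting $S_1(t,\eta)=\sum_{j=-1}^{3}e^{t\lambda_j(\eta)}P_j(\eta)\,1_{\{|\eta|\le r_0\}}$ then gives a bounded operator that commutes with $S(t,\eta)$ and reproduces the part of the flow on $\mathrm{span}\{\psi_j(\eta)\}$.

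Next I would represent the semigroup by the inverse Laplace transform (Dunford integral)
\bq
S(t,\eta)f=\frac1{2\pi i}\int_{\Gamma}e^{\lambda t}(\lambda-\hat B(\eta))^{-1}f\,d\lambda,
\eq
valid since $\hat B(\eta)=L-iv_1\eta-\frac{iv_1\eta}{1+\eta^2}P_0^1$ generates a $C_0$-semigroup on $L^2_\eta(\R^3_v)$ (the operator $L$ is dissipative by \eqref{L_3} and the remaining terms are bounded perturbations of order $|\eta|$, hence relatively bounded). For $|\eta|\le r_0$, Lemma \ref{mvpbsp0}(1) and Lemma \ref{mvpbsp1}(1) let me deform $\Gamma$ to a contour lying in $\{\mathrm{Re}\,\lambda\le-\alpha_0\}$ away from small circles around the $\lambda_j(\eta)$; collecting the residues at $\lambda_j(\eta)$ produces precisely $S_1(t,\eta)f$, while the remaining contour integral is $S_2(t,\eta)f$. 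For $|\eta|>r_0$, Lemma \ref{mvpbsp0}(2) places the whole spectrum in $\{\mathrm{Re}\,\lambda<-\alpha(r_0)\}$, so there is no residue and $S_2(t,\eta)=S(t,\eta)$. In either regime one gets
\bq
\|S_2(t,\eta)f\|_\eta\le \frac1{2\pi}\int_{\Gamma_{\alpha_0}}e^{\mathrm{Re}\,\lambda\,t}\,\|(\lambda-\hat B(\eta))^{-1}\|_{\mathcal L(L^2_\eta)}\,|d\lambda|\,\|f\|_\eta,
\eq
and the desired exponential bound $\|S_2(t,\eta)f\|_\eta\le Ce^{-\alpha_0 t}\|f\|_\eta$ follows once the resolvent is shown to be uniformly bounded along the vertical part of $\Gamma_{\alpha_0}$ with at most polynomial growth in $\mathrm{Im}\,\lambda$ that is killed by the exponential, uniformly in $\eta$.

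The main obstacle is exactly this last point: obtaining a resolvent bound for $(\lambda-\hat B(\eta))^{-1}$ on the shifted line $\mathrm{Re}\,\lambda=-\alpha_0$ that is uniform in $\eta\in\R$ and integrable against $e^{-\alpha_0 t}$. For large $|\mathrm{Im}\,\lambda|$ this is where Lemma \ref{mvpbsp0}(1) enters, giving resolvent control once $|\mathrm{Im}\,\lambda|\ge y_1$; for the bounded box $|\mathrm{Im}\,\lambda|\le y_1$, $|\eta|\le r_0$ the resolvent is controlled by compactness of $K$ and analytic perturbation theory after removing the eigenvalues, while for $|\eta|>r_0$ one uses the spectral gap from Lemma \ref{mvpbsp0}(2) together with the decomposition $L=K-\nu(v)$ and \eqref{nuv} to invert $\lambda-\hat B(\eta)$ by a Neumann series on the part where $\nu(v)$ is large and a finite-rank correction elsewhere. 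Patching these estimates into a single value of $\alpha_0>0$ and $C>0$ independent of $\eta$, and verifying that $S_1(t,\eta)+S_2(t,\eta)=S(t,\eta)$ on the nose (so that $S_2$ as \emph{defined} by the difference indeed coincides with the contour remainder), completes the proof.
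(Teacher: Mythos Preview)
Your overall strategy---inverse Laplace transform, contour shift to $\mathrm{Re}\,\lambda=-\alpha_0$, residues producing $S_1$, remainder producing $S_2$---is exactly what the paper does. But there is a real gap at the point you flag as ``the main obstacle.'' You write that the exponential bound follows once the resolvent has ``at most polynomial growth in $\mathrm{Im}\,\lambda$ that is killed by the exponential.'' On the vertical line $\mathrm{Re}\,\lambda=-\alpha_0$ one has $|e^{\lambda t}|=e^{-\alpha_0 t}$, which is \emph{constant} in $\mathrm{Im}\,\lambda$; the exponential gives you the time decay but contributes nothing to the convergence of the $d(\mathrm{Im}\,\lambda)$ integral. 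Lemma~\ref{mvpbsp0}(1) only tells you the line is in the resolvent set, not that $\|(\lambda-\hat B(\eta))^{-1}\|$ is integrable along it uniformly in $\eta$. Your sketch does not supply a mechanism for that integrability.

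The paper closes this gap with two devices you are missing. First, it writes
\[
(\lambda-\hat B(\eta))^{-1}=(\lambda-c(\eta))^{-1}+Z(\lambda,\eta),\qquad c(\eta)=-\nu(v)-iv_1\eta,
\]
so that the inverse Laplace transform of the first piece is computed \emph{explicitly} as $e^{tc(\eta)}$ (bounded by $e^{-\nu_0 t}$), removing the need to integrate it. Second, for the remainder $Z=(\lambda-c)^{-1}[\cdots]Q(\eta)(\lambda-c)^{-1}$ with $Q(\eta)=K-\frac{iv_1\eta}{1+\eta^2}P_0^1$ bounded uniformly in $\eta$, the paper estimates the vertical integral by duality: pair with a test function $g$, pull out the $e^{-\alpha t}$, apply Cauchy--Schwarz in $y=\mathrm{Im}\,\lambda$, and invoke the Kato--smoothing type bound
\[
\int_{-\infty}^{+\infty}\|(x+iy-c(\eta))^{-1}f\|^2\,dy\le \pi(x+\nu_0)^{-1}\|f\|^2,\qquad x>-\nu_0,
\]
which holds because $c(\eta)$ is a multiplication operator. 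This is what makes the contour integral finite uniformly in $\eta$; without isolating the multiplication part and using this $L^2$-in-$y$ estimate, a pointwise resolvent bound alone will not do the job.
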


\begin{proof}
We first know that $\hat{B}(\eta)$  generates a $C_0$ contraction semigroup (cf.  \cite{LI-2}). Then, the semigroup $e^{t\hat{B}(\eta)}$ can be represented by (cf. Lemma 6.4 in \cite{LI-1})
\bq\label{BQ1}
e^{t\hat{B}(\eta)}f=\int^{\kappa+i\infty}_{\kappa-i\infty}e^{\lambda t}(\lambda-\hat{B}(\eta))^{-1}fd\lambda,\quad f\in D(\hat{B}(\eta)^2),\,\,\kappa>0.
\eq

For $\lambda\in\rho(\hat{B}(\eta))\cap\{\lambda\in \C\,|\,{\rm Re}\lambda>-\nu_0\}$, we can rewrite $(\lambda-\hat{B}(\eta))^{-1}$ as
\bq\label{BQ2}
(\lambda-\hat{B}(\eta))^{-1}=(\lambda-c(\eta))^{-1}+Z(\lambda,\eta),
\eq
where
\bma
&Z(\lambda,\eta)=(\lambda-c(\eta))^{-1}(I-Q(\eta)(\lambda-c(\eta))^{-1})^{-1}Q(\eta)(\lambda-c(\eta))^{-1},\\
&c(\eta)= -\nu(v)-iv_1\eta,\quad Q(\eta)=K-\frac{i v_1\eta  }{1+\eta^2}P^1_0. \label{cQs}
\ema

Substitute \eqref{BQ2} into \eqref{BQ1} to deduce
\bq
e^{t\hat{B}(\eta)}f=e^{tc(\eta)}f+\frac{1}{2\pi i}\int^{\kappa+i\infty}_{\kappa-i\infty}e^{\lambda t}Z(\lambda,\eta)fd\lambda.\label{Semi1}
\eq

By \eqref{cQs} and \eqref{nuv}, it holds that
\be \|e^{tc(\eta)}f\| \leq  e^{-\nu_0t}\|f\|, \quad \forall f\in L^2(\R^3_v). \label{ect}\ee

To estimate the last term on the right hand side of \eqref{Semi1}, let us denote
\bq
U_{\kappa,l}=\int^l_{-l}e^{(\kappa+iy)t}Z(\kappa+iy)fdy,
\eq
where the constant $l>0$ is chosen large enough so that $l>y_1$ with $y_1$ is given by Lemma \ref{mvpbsp0}. Let
$$
\alpha=-\frac{\mu}2, \,\ |\eta|\leq r_0;\quad \alpha=\alpha(r_0),\,\ |\eta|\geq r_0,
$$
where $\alpha(r_0)>0$ is given in Lemma \ref{mvpbsp1}. Since the operator $Z(\lambda,\eta)$ is analytic in the domain $\mathrm{Re}\lambda\geq-\alpha$ expect $\lambda=\lambda_j(\eta)$~$(j=-1,0,1,2,3)$ for $|\eta|\leq r_0$, we can shift the integration $U_{\kappa,l}$ from the line $\mathrm{Re}\lambda=\kappa$ to $\mathrm{Re}\lambda=-\alpha$ to deduce
\bq
U_{\kappa,l}=2\pi i\sum^{3}_{j=-1}\mathrm{Res}\{e^{\lambda t}Z(\lambda,\eta)f;\lambda_j(\eta)\}+U_{-\alpha,l}+H_l,
\eq
where $\mathrm{Res}\{f(\lambda);\lambda_j\}$ means the residue of $f(\lambda)$ at $\lambda=\lambda_j$ and
\bq
H_l=\(\int^{\kappa+il}_{-\alpha+il}+\int^{\kappa-il}_{-\alpha-il}\)e^{\lambda t}Z(\lambda,\eta)fd\lambda.
\eq

By Lemma 3.14 in \cite{LI-2}, we have for any $\delta>0$ and $r_0>0$ that
\bq
\left\{\bln
&\sup_{x\geq-\nu_0+\delta, |\eta|\leq r_0}\|K(x+iy-c(\eta))^{-1}\|\leq C\delta^{-\frac{3}{5}}(1+|y|)^{-\frac{2}{5}},\\
&\sup_{x\geq-\nu_0+\delta, \eta\in\R}\|(v_1\eta )(1+\eta^2)^{-1}P^1_0(x+iy-c(\eta))^{-1}\|\leq C(\delta^{-1}+1)|y|^{-1}.
\eln\right.
\eq

Thus, it is easy to verify that
\bq
\|H_l\|_{\eta}\rightarrow0,\quad as\quad l\rightarrow\infty.
\eq

Since $\lambda_j(\eta)\in\rho(c(\eta))$ and
$$
Z(\lambda,\eta)=(\lambda-\hat{B}(\eta))^{-1}-(\lambda-c(\eta))^{-1},
$$
we can prove that
\bma
\mathrm{Re}s\Big\{e^{\lambda t}Z(\lambda,\eta)f;\lambda_j(\eta)\Big\}&=\mathrm{Re}s\Big\{e^{\lambda t}(\lambda-\hat{B}(\eta))^{-1}f;\lambda_j(\eta)\Big\}\nnm\\
&=e^{\lambda_j(\eta) t}(f,\overline{\psi_j(\eta)})_{\eta}\psi_j(\eta)1_{\{|\eta|\leq r_0\}}.\label{Semi2}
\ema
%for $|\eta|\leq r_0$ and otherwise this residue is 0.

To estimate the $U_{-\alpha,l}$, we have for any $f,g\in L^2_{\eta}(\mathbb{R}^3_v)$ that
\bma
&\Big|(U_{-\alpha,\infty}(t)f,g)_{\eta}\Big|\nnm\\
&\leq e^{-\alpha t}\int^{+\infty}_{-\infty}\Big|(Z(-\alpha+iy,\eta)f,g)_{\eta}\Big|dy \nnm\\
&\leq C\|Q(\eta)\|_{\eta}e^{-\alpha t}\int^{+\infty}_{-\infty} \|(-\alpha+iy-c(\eta))^{-1}f \|_{\eta} \|(-\alpha-iy-c(-\eta))^{-1}g \|_{\eta}dy \nnm\\
&\leq C\(\nu_0-\alpha\)^{-1}(\|K\|+1)e^{-\alpha t}\|f\|_{\eta}\|g\|_{\eta},
\ema
where  we have used the fact that (cf. Lemma 2.2.13 in \cite{UKAI-2})
$$
\int^{+\infty}_{-\infty}\|(x+iy-c(\eta))^{-1}f\|^2\leq \pi(x+\nu_0)^{-1}\|f\|^2,\quad x>-\nu_0.
$$

Thus
\bq
\|U_{-\alpha,\infty}(t)\|_{\eta}\leq Ce^{-\alpha t}.\label{Semi3}
\eq

Therefore,  it follows from \eqref{Semi1} and \eqref{Semi2} that
\be
e^{t\hat{B}(\eta)}f=S_1(t,\eta)f+S_2(t,\eta)f,
\ee
where
\bq
\left\{\bln
&S_1(t,\eta)f=\sum^3_{j=-1}e^{t\lambda_j(\eta)}(f,\overline{\psi_j(\eta)})_{\eta}\psi_j(\eta)1_{\{|\eta|\leq r_0\}},\\
&S_2(t,\eta)f=e^{tc(\eta)}f+U_{-\alpha,\infty}(t).
\eln\right.
\eq

In particular, $S_2(t,\eta)f$ satisfies \eqref{Semi01} in terms of \eqref{Semi3} and \eqref{ect}. The proof is completed.

\end{proof}
%%%%%%%%%%%%%%%%%%%%%%%%%%%%%%%%%%%%%%%%%%%%%%%%%%%%%%%%%%%%%%%%%%%%%%%%%%%%%%%%%%%%%%%%%%%%%%%%%%%%%%%%%%%%%%%%%%%%%%%%%%%%%%%%%%%%%%%%%%%%
%                                                                                                                                          %
%                                                                                                                                          %
%            GREEN FUNCTION                                                                                                                %
%                                                                                                                                          %
%                                                                                                                                          %
%%%%%%%%%%%%%%%%%%%%%%%%%%%%%%%%%%%%%%%%%%%%%%%%%%%%%%%%%%%%%%%%%%%%%%%%%%%%%%%%%%%%%%%%%%%%%%%%%%%%%%%%%%%%%%%%%%%%%%%%%%%%%%%%%%%%%%%%%%%%

\section{Green's function}\setcounter{equation}{0}
\label{sect3}

In this section, we establish the pointwise space-time estimates of the  Green's function defined by \eqref{LmVPB}. First, we consider the Green's function inside Mach region $|x|\le 6t$. Based on the spectral analysis given in section \ref{sect2}, we divide the Green's function into the low-frequency fluid part, the low-frequency non-fluid part and the high-frequency part and estimate the fluid part by complex analytical techniques. To estimate the Green's function outside Mach number region $|x|> 6t$, we apply a new Picard's iteration to construct the approximate sequences which consist of the singular kinetic waves, and estimate the smooth remainder part by the weighted energy method.

%%%%%%%%%%%%%%%%%%%%%%%%%%%%%%%%%%%%%%%%%%%%%%%%%%%%%%%%%%%%%%%%%%%%%%%%%%%%%%%%%%%%%%%%%%%%%%%%%%%%%%%%%%%%%%%%%%%%%%%%%%%%%%%%%%%%%%%%%%%%%%%%%
%                                                                                                                                               %
%                                                                                                                                               %
%                GREEN FUNCTION IN FINITE MACH NUMBER                                                                                           %
%                                                                                                                                               %
%                                                                                                                                               %
%%%%%%%%%%%%%%%%%%%%%%%%%%%%%%%%%%%%%%%%%%%%%%%%%%%%%%%%%%%%%%%%%%%%%%%%%%%%%%%%%%%%%%%%%%%%%%%%%%%%%%%%%%%%%%%%%%%%%%%%%%%%%%%%%%%%%%%%%%%%%%%%%

\subsection{Fluid part}

In this subsection, we establish the pointwise estimates of the fluid part of  Green's function  based on the spectral analysis given in Section \ref{sect2}. To this end, we decompose the operator $G(t,x)$ into a low-frequency part and a high-frequency part:
\be \label{L-R}
\left\{\bln
&G(t,x)=G_L(t,x)+G_H(t,x),\\
&G_L(t,x)=\frac1{\sqrt{2\pi}}\int_{|\eta|<\frac{r_0}{2}} e^{ ix\eta +t\hat{B}(\eta)}d \eta,\\
&G_H(t,x)=\frac1{\sqrt{2\pi}}\int_{|\eta|>\frac{r_0}{2}} e^{ ix\eta +t\hat{B}(\eta)}d \eta,
\eln\right.
\ee
where $r_0>0$ is given by Lemma \ref{mvpbsp1}. The operator $G_L(t,x)$ can be further divided into the fluid part and the non-fluid part:
\bq\label{L-R2}
G_L(t,x)=G_{L,0}(t,x)+G_{L,1}(t,x),
\eq
where
\bma
&G_{L,0}(t,x)=\sum^{3}_{j=-1}\frac{1}{\sqrt{2\pi}}\int_{|\eta|<\frac{r_{0}}{2}}e^{ix\eta +\lambda_j(\eta)t}\psi_j(\eta)\otimes\Big\langle \psi_j(\eta)+\frac{1}{1+\eta^2}P^1_0\psi_j(\eta)\Big|d \eta,\label{GL0}\\
&G_{L,1}(t,x)=G_{L}(t,x)-G_{L,0}(t,x),
\ema
with $\lambda_j(\eta)$ and $\psi_j(\eta)$  defined by \eqref{specr0} and \eqref{specr1} respectively.
Here for any $f,g\in L^2(\mathbb{R}^3_v)$, the operator $f\otimes\langle g|$ on $L^2(\mathbb{R}^3_v)$ is defined by \cite{LIU-2,LIU-3}
$$
f\otimes\langle g|u=(u,\overline{g})f, \quad u\in L^2(\mathbb{R}^3_v).
$$

By Lemma \ref{mvpbsp4}, we have the following estimates of each part of $\hat{G}(t,\eta)$ defined by \eqref{mVPB7}.
\begin{lem}\label{mvpbgf1}
 For any $g_{0}\in L^{2}(\mathbb{R}^{3}_{v})$, there exist positive constants $C$ and $\kappa_0$ such that
\bma
&\|\hat{G}_{L}(t,\eta)g_{0}\|_{\eta}\leq C\|g_{0}\|_{\eta},\\
&\|\hat{G}_{L,1}(t,\eta)g_{0}\|_{\eta}\leq Ce^{-\frac{\mu}2t}\|g_{0}\|_{\eta},\\
&\|\hat{G}_{H}(t,\eta)g_{0}\|_{\eta}\leq Ce^{-\kappa_0t}\|g_{0}\|_{\eta},
\ema
where $\hat{G}_{L}(t,\eta)$, $\hat{G}_{L;1}(t,\eta)$, and $\hat{G}_{H}(t,\eta)$ are the Fourier transforms of $G_L(t,x)$, $G_{L,1}(t,x)$ and $G_{H}(t,x)$.
\end{lem}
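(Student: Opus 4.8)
The plan is to derive all three estimates directly from the semigroup decomposition in Lemma \ref{mvpbsp4}, keeping in mind that $\hat G(t,\eta)=S(t,\eta)=e^{t\hat B(\eta)}$ and that $\hat G_L$, $\hat G_{L,1}$, $\hat G_H$ are obtained from $\hat G(t,\eta)$ by restricting $\eta$ to $|\eta|<r_0/2$, $|\eta|<r_0/2$, $|\eta|>r_0/2$ respectively and then subtracting the fluid semigroup $S_1$ where appropriate. So the whole lemma reduces to norm bounds on $S_1(t,\eta)$ and $S_2(t,\eta)$ in the weighted norm $\|\cdot\|_\eta$, together with the spectral gap facts from Lemma \ref{mvpbsp0} and Lemma \ref{mvpbsp1}.

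For the first estimate, on $|\eta|<r_0/2$ I would write $\hat G_L(t,\eta)g_0 = S_1(t,\eta)g_0 + S_2(t,\eta)g_0$. The term $S_2$ is controlled by \eqref{Semi01} with a uniform decay $e^{-\alpha_0 t}$, hence bounded by $C\|g_0\|_\eta$. For $S_1$, I use the expansion $S_1(t,\eta)g_0=\sum_{j=-1}^{3}e^{t\lambda_j(\eta)}(g_0,\overline{\psi_j(\eta)})_\eta\psi_j(\eta)$: since the eigenfunctions $\psi_j(\eta)$ are analytic and orthonormal in $(\cdot,\cdot)_\eta$ on $|\eta|\le r_0$ (Lemma \ref{mvpbsp1}(2)), they are uniformly bounded there, so $|(g_0,\overline{\psi_j(\eta)})_\eta|\le C\|g_0\|_\eta$; and from the expansion \eqref{specr0} we have $\mathrm{Re}\,\lambda_j(\eta)=-a_j\eta^2+O(\eta^3)\le 0$ for $|\eta|\le r_0$ (shrinking $r_0$ if necessary), so $|e^{t\lambda_j(\eta)}|\le 1$. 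Summing the five terms gives $\|\hat G_L(t,\eta)g_0\|_\eta\le C\|g_0\|_\eta$.

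For the second estimate, note $\hat G_{L,1}(t,\eta)g_0 = \hat G_L(t,\eta)g_0 - S_1(t,\eta)g_0 = S_2(t,\eta)g_0$ on $|\eta|<r_0/2$; by \eqref{Semi01} this is bounded by $Ce^{-\alpha_0 t}\|g_0\|_\eta$, and since $\alpha_0$ can be taken at least $\mu/2$ (recall $\alpha=\mu/2$ is the shift used on the low-frequency range in the proof of Lemma \ref{mvpbsp4}), we get the claimed $Ce^{-\mu t/2}\|g_0\|_\eta$. For the third estimate, on $|\eta|>r_0/2$ Lemma \ref{mvpbsp1}(1), or part (2) of Lemma \ref{mvpbsp0}, gives $\sigma(\hat B(\eta))\subset\{\mathrm{Re}\,\lambda<-\alpha(r_0)\}$ uniformly; there is no fluid part ($1_{\{|\eta|\le r_0\}}$ kills $S_1$), so $\hat G_H(t,\eta)g_0 = S_2(t,\eta)g_0 = S(t,\eta)g_0$ and one repeats the contour-shift argument of Lemma \ref{mvpbsp4} (shifting to $\mathrm{Re}\,\lambda=-\alpha(r_0)$, using the resolvent bounds from Lemma 3.14 of \cite{LI-2} and the estimate $\int\|(x+iy-c(\eta))^{-1}f\|^2\,dy\le\pi(x+\nu_0)^{-1}\|f\|^2$) to conclude $\|\hat G_H(t,\eta)g_0\|_\eta\le Ce^{-\kappa_0 t}\|g_0\|_\eta$ with $\kappa_0=\min\{\nu_0,\alpha(r_0)\}$ or similar, uniformly in $\eta$.

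The main obstacle is uniformity in $\eta$: one must be careful that the constants $C$, $\alpha_0$, $\kappa_0$ do not degenerate as $|\eta|\to r_0/2$ from either side or as $|\eta|\to\infty$. This is handled by the fact that Lemma \ref{mvpbsp4} already provides $\alpha_0$ independent of $\eta$, and that the resolvent bounds cited from \cite{LI-2} hold uniformly on the relevant $\eta$-ranges; the only genuinely new point is checking that $\mathrm{Re}\,\lambda_j(\eta)\le 0$ on the whole low-frequency ball, which follows from \eqref{specr0} after possibly shrinking $r_0$. Everything else is bookkeeping across the three frequency regimes.
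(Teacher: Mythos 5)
Your proposal follows the same route the paper intends (which is simply to invoke Lemma \ref{mvpbsp4} and the spectral structure); the first two estimates are handled correctly, with the useful observation that on $|\eta|<r_0/2$ the non-fluid remainder $\hat G_{L,1}$ is precisely $S_2(t,\eta)$ and the shift in the proof of Lemma \ref{mvpbsp4} there is to $\mathrm{Re}\,\lambda=-\mu/2$.

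There is one inaccuracy in your treatment of $\hat G_H$. You write that on $|\eta|>r_0/2$ ``there is no fluid part ($1_{\{|\eta|\le r_0\}}$ kills $S_1$), so $\hat G_H=S_2$.'' That identity is only valid for $|\eta|>r_0$; on the intermediate annulus $r_0/2<|\eta|\le r_0$ the indicator in $S_1$ is still $1$ and the eigenvalue terms $e^{t\lambda_j(\eta)}(\,\cdot\,,\overline{\psi_j(\eta)})_\eta\psi_j(\eta)$ are present. The estimate still holds, but the correct reason on that annulus is that Lemma \ref{mvpbsp0}(2) applied at radius $r_0/2$ gives $\sigma(\hat B(\eta))\subset\{\mathrm{Re}\,\lambda<-\alpha(r_0/2)\}$ uniformly for $|\eta|\ge r_0/2$, so either (i) one shifts the contour in \eqref{BQ1} to $\mathrm{Re}\,\lambda=-\alpha(r_0/2)$ and there are no residues to pick up, or (ii) one keeps the $S_1+S_2$ decomposition and notes that $\mathrm{Re}\,\lambda_j(\eta)\le-\alpha(r_0/2)$ forces $\|S_1(t,\eta)\|_\eta\le Ce^{-\alpha(r_0/2)t}$ there. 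Also, the shift level should be $\alpha(r_0/2)$, not $\alpha(r_0)$, since the frequency cutoff is at $r_0/2$. With this repair the argument is complete and matches the paper's intended derivation.
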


%%%%%%%%%%%%%%%%%%%%%%%%%%%%%%%%%%%%%%%%%%%%%%%%%%%%%%%%%%%%%%%%%%%%%流体部分的格林函数%%%%%%%%%%%%%%%%%%%%%%%%%%%%%%%%%%%%%%%%%%%%%%%%%%%%%%%%%%%%
With the help of Lemma \ref{mvpbsp1}, Lemma \ref{mvpbgf1} and complex analytical techniques, we have the following pointwise estimates of $G_{L,0}(t,x)$.
\begin{lem}\label{mvpbgf2}
For any given constant $D_1\geq1$, there exist three positive constants $C_{0}$, $C_{1}$ and $C$ such that for $|x|\leq D_1t$,
\bq\label{Gpd}
\left\{\bln
&\|\dxa P^j_0G_{L,0}(t,x)\|\leq C\bigg[(1+t)^{-\frac{1+\alpha}{2}}\sum^{1}_{i=-1}e^{-\frac{|x-\beta_it|^{2}}{C_{0}(1+t)}}+e^{-\frac{t}{C_{1}}}\bigg],\quad j=1,2,3,\\
&\|\dxa P_{1}G_{L,0}(t,x)\|,\|\dxa G_{L,0}(t,x)P_1\|\leq C\bigg[(1+t)^{-\frac{2+\alpha}{2}}\sum^{1}_{i=-1}e^{-\frac{|x-\beta_it|^{2}}{C_{0}(1+t)}}+e^{-\frac{t}{C_{1}}}\bigg],\\
&\|\dxa P_{1}G_{L,0}(t,x)P_1\|\leq C\bigg[(1+t)^{-\frac{3+\alpha}{2}}\sum^{1}_{i=-1}e^{-\frac{|x-\beta_it|^{2}}{C_{0}(1+t)}}+e^{-\frac{t}{C_{1}}}\bigg],
\eln\right.
\eq
where $\alpha\geq0$ and $\beta_j=j\sqrt{\frac{8}{3}}$ with $j=-1,0,1$.
\end{lem}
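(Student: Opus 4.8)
The plan is to obtain the pointwise bounds for $G_{L,0}(t,x)$ from its explicit spectral representation \eqref{GL0} by splitting each summand according to whether the corresponding wave speed $\beta_j$ vanishes. Writing $G_{L,0}(t,x)=\sum_{j=-1}^{3}G_{L,0}^{(j)}(t,x)$ with
$$
G_{L,0}^{(j)}(t,x)=\frac1{\sqrt{2\pi}}\int_{|\eta|<r_0/2}e^{ix\eta+\lambda_j(\eta)t}\psi_j(\eta)\otimes\Big\langle\psi_j(\eta)+\tfrac1{1+\eta^2}P^1_0\psi_j(\eta)\Big|d\eta,
$$
I would first apply $\dxa$, which brings down a factor $(i\eta)^\alpha$, and then insert the low-frequency expansions from Lemma \ref{mvpbsp1}: $\lambda_j(\eta)=-i\beta_j\eta-a_j\eta^2+O(\eta^3)$ with $a_j>0$, and $\psi_j(\eta)=E_j+O(\eta)$. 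The key structural point, responsible for the three different decay rates, is that $\psi_{j,0}=E_j$ is a purely macroscopic vector lying in $N_0$, so that $P_1\psi_j(\eta)=O(\eta)$ and $\langle\text{(microscopic projection)}|$ likewise gains one power of $\eta$; hence $P^j_0G_{L,0}$ carries the integrand $\eta^\alpha e^{-a_j\eta^2 t}$, while $P_1G_{L,0}$ and $G_{L,0}P_1$ carry $\eta^{\alpha+1}e^{-a_j\eta^2 t}$, and $P_1G_{L,0}P_1$ carries $\eta^{\alpha+2}e^{-a_j\eta^2 t}$. Each extra power of $\eta$ yields, after integration, an extra factor $(1+t)^{-1/2}$, which accounts exactly for the exponents $\tfrac{1+\alpha}{2}$, $\tfrac{2+\alpha}{2}$, $\tfrac{3+\alpha}{2}$ in \eqref{Gpd}.

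Next I would carry out the oscillatory-integral estimate for a generic term of the form
$$
I_{m,j}(t,x)=\int_{|\eta|<r_0/2}\eta^{m}e^{i(x-\beta_jt)\eta}e^{-a_j\eta^2 t+O(\eta^3)t}\,h_j(\eta)\,d\eta,
$$
where $h_j$ is analytic on $|\eta|\le r_0$. The standard device is to shift the contour of integration to $\eta\mapsto\eta+i\kappa\,\mathrm{sgn}(x-\beta_jt)$ (with $\kappa$ chosen as a small multiple of $|x-\beta_jt|/(1+t)$, and truncated so one stays inside the analyticity strip and the region where $\mathrm{Re}\,\lambda_j<-\mu/2$ dominates the error terms $O(\eta^3)t$). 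On the shifted contour $e^{i(x-\beta_jt)\eta}$ becomes $e^{-\kappa|x-\beta_jt|}$ times an oscillation, and $e^{-a_j\eta^2 t}$ is controlled by $e^{ca_j\kappa^2 t}$; optimizing in $\kappa$ produces the Gaussian factor $e^{-|x-\beta_jt|^2/(C_0(1+t))}$ together with the prefactor $(1+t)^{-(1+m)/2}$ from the remaining Gaussian integral in the real variable. The endpoints $|\eta|=r_0/2$ of the truncated contour contribute a term bounded by $e^{-t/C_1}$ because there $\mathrm{Re}\,\lambda_j(\eta)\le-a_j(r_0/2)^2+O(r_0^3)<0$; this is the source of the $e^{-t/C_1}$ summands in \eqref{Gpd}, and the restriction $|x|\le D_1t$ is used precisely to absorb the factor $e^{\kappa|x|}$ arising when $|x|$ is large compared with $t$ into $e^{-t/C_1}$ after adjusting constants. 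For $j=0,2,3$ one has $\beta_j=0$, so the wave is centered at $x=0$ and is subsumed in the $i=0$ term of the sums in \eqref{Gpd}; for $j=\pm1$ one gets the two moving Gaussians at $x=\pm\sqrt{8/3}\,t$.

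The main obstacle is the careful bookkeeping in the contour shift: one must verify that the cubic remainder $O(\eta^3)t$ in the exponent, as well as the $O(\eta)$ corrections in $\psi_j(\eta)$ and in $(1+\eta^2)^{-1}$, do not destroy the Gaussian gain, and that the contour can be shifted uniformly for all $|x|\le D_1t$ while keeping $\kappa$ small enough to remain within $|\eta|\le r_0$ and within the spectral gap region of Lemma \ref{mvpbsp1}. This is handled by choosing $\kappa=\min\{\epsilon_0,\ \epsilon_1|x-\beta_jt|/(1+t)\}$ for suitable small $\epsilon_0,\epsilon_1$ depending on $a_j$, $r_0$ and $D_1$: on the regime $\kappa=\epsilon_1|x-\beta_jt|/(1+t)$ the quadratic term dominates all corrections and gives the Gaussian, while on the regime $\kappa=\epsilon_0$ one already has $|x-\beta_jt|\gtrsim(1+t)$, so $e^{-\epsilon_0|x-\beta_jt|}\le e^{-|x-\beta_jt|^2/(C_0(1+t))}$ up to constants and the estimate closes. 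Assembling the five pieces and distributing the $e^{-t/C_1}$ errors (which are dominated by, e.g., $e^{-t/C_1}\le C e^{-t/(2C_1)}$ uniformly) yields \eqref{Gpd}. Finally, summing over $j=-1,\dots,3$ and noting that $\{\beta_{-1},\beta_0,\beta_1\}=\{-\sqrt{8/3},0,\sqrt{8/3}\}$ exhausts all the wave speeds gives the stated form with the sum $\sum_{i=-1}^{1}e^{-|x-\beta_it|^2/(C_0(1+t))}$.
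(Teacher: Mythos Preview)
Your proposal is correct and follows essentially the same approach as the paper: decompose $G_{L,0}$ into the eigenprojection summands, exploit the analyticity of $\lambda_j(\eta)$ and $\psi_j(\eta)$ on $\{|\eta|<r_0\}$ from Lemma~\ref{mvpbsp1} to shift the integration contour into the complex plane by an amount proportional to $(x-\beta_jt)/t$, and use $P_1\psi_{j,0}=0$ to gain the extra powers of $\eta$ that produce the improved decay rates for the microscopic projections. The only cosmetic difference is that the paper fixes the shift to be exactly $(x-\beta_jt)/(C_2t)$ with $C_2=\max\{2(D_1+\beta_j)/r_0,\,2a_j\}$, so that the hypothesis $|x|\le D_1t$ forces the shift to stay inside $\{|\eta|<r_0\}$ without the need for your truncation $\kappa=\min\{\epsilon_0,\epsilon_1|x-\beta_jt|/(1+t)\}$; the vertical segments of the resulting rectangular contour then give precisely the $e^{-t/C_1}$ remainder you describe.
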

\begin{proof}
By \eqref{GL0}, we have
\bma
&\quad\partial^{\alpha}_xG_{L,0}(t,x)\nnm\\
&=\frac{1}{\sqrt{2\pi}}\sum^{3}_{j=-1} \int_{|\eta|<\frac{r_{0}}{2}}(i\eta)^{\alpha}e^{ix\eta +\lambda_j(\eta)t}\psi_j(\eta)\otimes\Big\langle \psi_j(\eta) +\frac{1}{1+\eta^2}P^1_0\psi_j(\eta)\Big|d\eta\nnm\\
&=:\frac{1}{\sqrt{2\pi}}\sum^{3}_{j=-1} U^\alpha_j.\label{GL1}
\ema

By \eqref{specr0} and \eqref{specr1}, it holds that for $j=-1,0,1,2,3$,
\bq
\left\{\bln
&\lambda_j(\eta)=-i\beta_{j}\eta-a_j\eta^{2}+O(\eta^{3}),\label{Lm1}\\
&\psi_j(\eta)=\psi_{j,0}+\psi_{j,1}\eta+O(\eta^{2}).
\eln\right.
\eq

For simplicity, we only deal with the term $U^\alpha_1$ and assume that $x-\beta_{1}t>0$. By Lemma \ref{mvpbsp1}, the operator $\psi_1(\eta)\otimes\langle\psi_1(\eta)+\frac{1}{1+\eta^2}P^1_0\psi_1(\eta)|$ and the function $e^{ \lambda_1(\eta)t}$ are analytic in  $\{\eta\in\mathbb{C}\,|\,|\eta|< r_0\}$. Then, we have by Cauchy Theorem that
\bma\label{I1}
%&\int_{|\eta|<\frac{r_{0}}{2}}(i\eta)^{\alpha}e^{ix\eta+\lambda_1(\eta)t}\psi_1(\eta)\otimes\Big\langle\psi_1(\eta)+\frac{1}{1+\eta^2}P^1_0\psi_1(\eta)\Big|d\eta\nnm\\
U^\alpha_1&=\sum^3_{n=1}\int_{\Gamma_n}(i\eta)^{\alpha}e^{ix\eta+\lambda_1(\eta)t}\psi_1(\eta)\otimes\Big\langle\psi_1(\eta)+\frac{1}{1+\eta^2}P^1_0\psi_1(\eta)\Big|d\eta\nnm\\
&=:I^{\alpha}_{1}+I^{\alpha}_{2}+I^{\alpha}_{3},
\ema
where $\Gamma_{n}$ $(n=1,2,3)$ are given as
\bq
\left\{\bln
&\Gamma_{1}=\Big\{\eta:\ \mathrm{Re}(\eta)=-\frac{r_{0}}{2},\ 0\leq\mathrm{Im}(\eta)\leq\frac{(x-\beta_{1}t)}{C_2t}\Big\},\\
&\Gamma_{2}=\Big\{\eta: \ \mathrm{Im}(\eta)=\frac{(x-\beta_{1}t)}{C_2t},\ -\frac{r_{0}}{2}\leq\mathrm{Re}(\eta) \leq\frac{r_{0}}{2}\Big\},\\
&\Gamma_{3}=\Big\{\eta:\ \mathrm{Re}(\eta)=\frac{r_{0}}{2},\ 0\leq\mathrm{Im}(\eta)\leq\frac{(x-\beta_{1}t)}{C_2t}\Big\}.
\eln\right.
\eq

By taking $C_2=\max \{\frac{2(D_1+\beta_1)}{r_0},2a_1 \}$, we have $\Gamma_n\in \{\eta\in\mathbb{C}\,|\,|\eta|<r_0\}$ for $n=1,2,3$. Let $\eta_1=\mathrm{Re}(\eta)$ and $\eta_2=\mathrm{Im}(\eta)$.
Since $ \psi_1(\eta)= \psi_{1,0}+O(\eta)$ and $P^l_0\psi_{1,0}\neq0$, $l=1,2,3$, %and
%$$
%\Big\|P^l_0\psi_1(\eta)\otimes\Big\langle\psi_1(\eta)+\frac{1}{1+\eta^2}P^1_0\psi_1(\eta)\Big|\Big\|\leq C,
%$$
it follows that
\bma
\Big\|P^l_0I^{\alpha}_{2}\Big\|&\leq\int_{\Gamma_2}|\eta|^{\alpha}|e^{ix\eta+\lambda_1(\eta)t}|\Big\|P^l_0\psi_1(\eta)\otimes\Big\langle\psi_1(\eta)+\frac{1}{1+\eta^2}P^1_0\psi_1(\eta)\Big|\Big\|d\eta\nonumber\\
&\leq Ce^{-(1-\frac{a_{1}}{C_2})\frac{|x-\beta_{1}t|^{2}}{C_2t}}\int_{-\frac{r_{0}}{2}}^{\frac{r_{0}}{2}}e^{-\eta_1^2a_1t+O(\eta_1^3+\frac{(x-\beta_{j}t)^3}{t^3})t}\(|\eta_1|^{\alpha} +\Big|\frac{x-\beta_{1}t}{C_2t}\Big|^{\alpha}\)d\eta_1\nonumber\\
&\leq Ce^{-(1-\frac{a_{1}}{C_2})\frac{|x-\beta_{1}t|^{2}}{2C_2t}}\bigg\{\Big|\frac{x-\beta_{1}t}{C_2t}\Big|^{\alpha}\int^{\frac{r_{0}}{2}}_{-\frac{r_{0}}{2}}e^{-\frac{a_{1}\eta_1^{2}t}{2}}d\eta_1
+\int^{\frac{r_{0}}{2}}_{-\frac{r_{0}}{2}}e^{-a_{1}\eta_1^{2}t}|\eta_1|^{\alpha}d\eta_1\bigg\} \nnm\\
&\leq C(1+t)^{-\frac{1+\alpha}{2}}e^{-\frac{|x-\beta_{1}t|^2}{C_0(1+t)}},\quad l=1,2,3, \label{G2}
\ema
where $C_0,C>0$ are two constants.

Noting that $P_1\psi_1(\eta)=\eta P_1\psi_{1,1}+O(\eta^2)$, we can obtain by a similar argument as  \eqref{G2} that
\bq
\left\{\bln
%&\|P^2_0I^{\alpha}_{j,2}\|+\|P^3_0I^{\alpha}_{j,2}\|\leq C(1+t)^{-\frac{1+\alpha}{2}}e^{-\frac{|x-\beta_{j}t|^2}{C_0(1+t)}},\\
&\|P_1I^{\alpha}_{2}\|,\|I^{\alpha}_{2}P_1\|\leq C(1+t)^{-\frac{2+\alpha}{2}}e^{-\frac{|x-\beta_{1}t|^2}{C_0(1+t)}},\\
&\|P_1I^{\alpha}_{2}P_1\|\leq C(1+t)^{-\frac{3+\alpha}{2}}e^{-\frac{|x-\beta_{1}t|^2}{C_0(1+t)}}.\label{GJ1}
\eln\right.
\eq

For $I^\alpha_{n}$, $n=1,3$, we have
\bma
\Big\|P^l_0I^{\alpha}_{n}\Big\|&\leq\int_{\Gamma_n}|\eta|^{\alpha}|e^{ix\eta+\lambda_1(\eta)t}|\Big\|P^l_0\psi_1(\eta)\otimes\Big\langle\psi_1(\eta)+\frac{1}{1+\eta^2}P^1_0\psi_1(\eta)\Big|\Big\|d\eta\nonumber\\
&\leq Ce^{-\frac{a_1r^2_0t}{4} +O(r_0^3)t}\int^{\frac{(x-\beta_{1}t)}{C_2t}}_{0}e^{-(x-\beta_{1}t)\eta_2+a_1\eta^2_2t}d\eta_2\nonumber\\
&\leq Ce^{-\frac{a_1r^2_0t}{8}}\int^{\frac{(x-\beta_{1}t)}{C_2t}}_{0}e^{-(1-\frac{a_{1}}{C_2})(x-\beta_{1}t)\eta_2}d\eta_2\nnm\\
&\leq Ce^{-\frac{t}{C_1}}, \quad l=1,2,3,
\ema where $C_1,C>0$ are two constants.
Similarly, it holds that for $n=1,3$,
\bq\label{G3}
\left\{\bln
&\|P_1I^{\alpha}_{n}\|,\|I^{\alpha}_{n}P_1\|\leq Ce^{-\frac{t}{C_1}},\\
&\|P_1I^{\alpha}_{n}P_1\|\leq Ce^{-\frac{t}{C_1}}.
\eln\right.
\eq

 For the case of $x-\beta_{1}t<0$, we can also obtain  \eqref{G2}--\eqref{G3} by repeating the similar argument as above.

By combining \eqref{I1} and \eqref{G2}--\eqref{G3}, we obtain
\bq\label{G4}
\left\{\bln
&\|P^l_0U^\alpha_1\|\leq C\[(1+t)^{-\frac{1+\alpha}{2}}e^{-\frac{|x-\beta_{1}t|^2}{C_0(1+t)}}+e^{-\frac{t}{C_1}}\],\quad l=1,2,3\\
&\|P_1U^\alpha_1\|+\|U^\alpha_1P_1\|\leq C\[(1+t)^{-\frac{2+\alpha}{2}}e^{-\frac{|x-\beta_{1}t|^2}{C_0(1+t)}}+e^{-\frac{t}{C_1}}\],\\
&\|P_1 U^\alpha_1 P_1\|\leq C\[(1+t)^{-\frac{3+\alpha}{2}}e^{-\frac{|x-\beta_{1}t|^2}{C_0(1+t)}}+e^{-\frac{t}{C_1}}\].
\eln\right.
\eq

For the terms $U^\alpha_j$ $(j=-1,0,2,3)$, we can repeat the above process and omit the details of proof. This, together with \eqref{GL1}, lead to \eqref{Gpd}. The proof of the lemma is completed.
\end{proof}

%%%%%%%%%%%%%%%%%%%%%%%%%%%%%%%%%%%%%%%%%%%%%%%%%%%%%%%%%格林函数的高频部分%%%%%%%%%%%%%%%%%%%%%%%%%%%%%%%%%%%%%%%%%%%%%%%%%%%%%%%%%%%%%%%%%%%%

\subsection{Kinetic part}

In this subsection, we extract the singular kinetic part from the Green's function $G$ and establish the pointwise estimate of the remainder part. Since $\hat{G}(t,\eta)$ does not belongs to $L^1(\mathbb{R}_{\eta})$, $G(t,x)$ can be decompose into the singular part and the remainder
smooth part. Indeed, we construct the approximate sequences of $\hat{G}$ with faster decay rate in frequency space, which is equivalent to the higher regularity in physical space, and estimate the smooth remainder term by the weighted energy method. To begin with, we define the $k$-th degree Mixture operator $\mathbb{M}^t_k(\eta)$ by (cf. \cite{LIU-2,LIU-3})
\bq
\hat{\mathbb{M}}^t_k(\eta)=\int^t_0\int^{s_1}_0\cdot\cdot\cdot\int^{s_{k-1}}_0\hat{S}^{t-s_1}K\hat{S}^{s_{1}-s_2}\cdot\cdot\cdot K\hat{S}^{s_{k-1}-s_k}K\hat{S}^{s_k}ds_k\cdot\cdot\cdot ds_1,
\eq
where $\eta\in\mathbb{C}$, and $\hat{S}^t$ is a operator on $L^2(\mathbb{R}^3_v)$ defined by
$$
\hat{S}^t=e^{-[\nu(v)+iv_1\eta ]t}.
$$

Denote
$$
D_{\delta}=\{\eta\in\mathbb{C}\,|\,|\mathrm{Im}\eta|\leq\delta\}.
$$
\begin{lem}[Mixture Lemma]\label{mvpbgf3}
For any $k\geq1$, $\hat{\mathbb{M}}^t_k(\eta)$ is analytic for $\eta\in D_{\nu_0}$ and satisfies
\bma
\|\hat{\mathbb{M}}^t_{2}(\eta)g_0\| &\le C(1+|\eta|)^{-1}(1+t)^2e^{-\nu_0t}(\|g_0\| +\|\Tdv g_0\| ), \label{mix2}
\\
\|\hat{\mathbb{M}}^t_{3k}(\eta)g_0\|&\leq C_k(1+|\eta|)^{-k}(1+t)^{3k}e^{-\nu_0t}\|g_0\|  ,\label{M1}
\ema
for any $g_0\in L^2_v$ and  two positive constants $C$ and  $C_k$.
\end{lem}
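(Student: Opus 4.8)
\textbf{Proof proposal for the Mixture Lemma (Lemma \ref{mvpbgf3}).}
The plan is to prove the two estimates \eqref{mix2} and \eqref{M1} by exploiting the basic averaging/smoothing mechanism hidden in the composition $\hat{S}^{s}K\hat{S}^{s'}$: each time the free-transport semigroup $\hat{S}^{t}=e^{-[\nu(v)+iv_1\eta]t}$ sandwiches the compact operator $K$, one gains a factor of order $(1+|\eta|)^{-1}$ (after averaging in $v$), while the exponential factor $e^{-\nu(v)t}\le e^{-\nu_0 t}$ survives and the time integrations only cost powers of $t$. First I would record the elementary bound $\|\hat{S}^{t}g\|\le e^{-\nu_0 t}\|g\|$ from \eqref{nuv}, together with the analyticity of $\eta\mapsto\hat{S}^t$ on $D_{\nu_0}$ (since $|\mathrm{Im}\,\eta|\le\nu_0$ keeps $\mathrm{Re}(\nu(v)+iv_1\eta)\ge \nu(v)-|v_1|\nu_0>0$ only for bounded $v$; more precisely the real part is $\nu(v)+v_1\mathrm{Im}\,\eta\ge \nu_0(1+|v|)-|v|\nu_0=\nu_0>0$ using \eqref{nuv}), which gives the claimed analyticity of $\hat{\mathbb{M}}^t_k$ on $D_{\nu_0}$ by dominated convergence in the iterated integral.

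The heart of the matter is the single ``mixture gain''. I would prove: for all $\eta\in D_{\nu_0}$, $s>0$, and $g_0\in L^2_v$,
\be\label{prop-gain}
\Big\|\int_0^{s}\hat{S}^{s-\tau}K\hat{S}^{\tau}g_0\,d\tau\Big\|\le C(1+|\eta|)^{-1}(1+s)e^{-\nu_0 s}\big(\|g_0\|+\|\Tdv g_0\|\big),
\ee
and more generally that two nested such sandwiches produce the genuine $(1+|\eta|)^{-1}$ decay without the $\Tdv g_0$ term, because the first $K$ already supplies the needed $v$-regularity (the kernel $k(v,v_*)$ in \eqref{LKF} is smooth in $v$ with Gaussian decay, so $Kg$ is smooth with all derivatives controlled by $\|g\|$). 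The mechanism behind \eqref{prop-gain} is stationary-phase/non-stationary-phase in the $v_1$-variable: writing $K$ as an integral against $k(v,v_*)$, the $\tau$-integral $\int_0^s e^{-iv_1\eta(s-\tau)}e^{-iv_{*1}\eta\tau}(\cdots)\,d\tau$ oscillates in $v_1$ and $v_{*1}$; after integrating by parts once in $v_1$ (throwing the derivative onto the smooth rapidly decaying kernel $k$) one extracts $(1+|\eta|)^{-1}$. This is exactly the argument in \cite[Mixture Lemma]{LIU-2} and \cite{LI-5}, adapted here to the operator $K$ of the Boltzmann kernel; I would follow that route, being careful that the contour stays in $D_{\nu_0}$ so the exponential weights are never lost.

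Granting the one-step gain, \eqref{M1} follows by iteration: $\hat{\mathbb{M}}^t_{3k}$ is a $3k$-fold time-ordered integral of $\hat S^{\cdot}K$ blocks, which I group into $k$ consecutive triples; each triple contributes a factor $C(1+|\eta|)^{-1}$ by the two-sandwich version of \eqref{prop-gain} (no $\Tdv$ term needed since an interior $K$ regularizes), the leftover $\hat S$ factors contribute $e^{-\nu_0(t-\text{something})}\cdots e^{-\nu_0(\cdot)}$ telescoping to $e^{-\nu_0 t}$, and the $3k$ nested time integrals over the simplex $0\le s_{3k}\le\cdots\le s_1\le t$ contribute at most $t^{3k}/(3k)!\le C_k(1+t)^{3k}$. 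Assembling, $\|\hat{\mathbb{M}}^t_{3k}(\eta)g_0\|\le C_k(1+|\eta|)^{-k}(1+t)^{3k}e^{-\nu_0 t}\|g_0\|$, which is \eqref{M1}; the case $k=2$ of \eqref{mix2} is the $3$-block estimate but with only two $K$'s effectively usable for the gain after the first, hence the extra $\|\Tdv g_0\|$ is needed to supply the $v$-regularity at the innermost step, and the power is $(1+t)^2$ for the two time integrations. The main obstacle I anticipate is proving the one-step gain \eqref{prop-gain} rigorously: one must show the integration-by-parts in $v_1$ does not destroy the Gaussian decay needed for the $L^2_v$ (and $L^2_{v_*}$) bounds, i.e. that $\partial_{v_1}k(v,v_*)$ and the weights $\nu(v)$, $1/\nu(v)$ combine so that the resulting kernel is still Hilbert–Schmidt uniformly in $\eta\in D_{\nu_0}$; this is where the explicit formulas \eqref{LKF} and the bound \eqref{nuv} must be used carefully, exactly as in \cite{LIU-2,LI-5}.
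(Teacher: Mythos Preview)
Your proposal is correct and takes essentially the same route as the paper: the paper's proof is literally a two-line reference to the mixture lemmas in \cite{LIU-2,LI-5}, together with the single observation that $|e^{-[\nu(v)+iv_1\eta]t}|\le e^{-\nu_0 t}$ for $\eta\in D_{\nu_0}$ (exactly the analyticity/exponential-bound computation you carry out using \eqref{nuv}). Your outline---integration by parts in $v_1$ against the smooth kernel $k(v,v_*)$ to extract $(1+|\eta|)^{-1}$, the regularizing role of an interior $K$, the telescoping of the $e^{-\nu_0(\cdot)}$ factors, and the $t^{3k}/(3k)!$ from the simplex integration---is precisely the content of those cited proofs.

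One small imprecision worth flagging: your ``one-step gain'' \eqref{prop-gain} is written for $\hat{\mathbb{M}}^t_1$ (one $K$), whereas the estimate \eqref{mix2} is for $\hat{\mathbb{M}}^t_2$ (two $K$'s). In the Liu--Yu scheme the gain of $(1+|\eta|)^{-1}$ genuinely requires the structure $\hat S K\hat S K\hat S$ so that the integration-by-parts in the inner $v$-variable lands on a kernel rather than on $g_0$ itself; with only one $K$ you would need two derivatives on $g_0$, not one. This does not affect your overall argument---you correctly invoke the two-$K$ version for \eqref{mix2} and the three-$K$ blocks for \eqref{M1}---but the labeling of the intermediate step should match.
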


\begin{proof}
This lemma is a slightly modification of the mixture lemmas in \cite{LIU-2,LI-5}. The proof is same as Lemma 3.7 in \cite{LI-5} by using the fact that
$|e^{-[\nu(v)+iv_1\eta]t}|\le e^{-\nu_0t} $ for $\eta\in D_{\nu_0}.$
\end{proof}

In terms of \eqref{B(s)}, the system \eqref{mVPB7} can be written as
\bq
\left\{\bln
&\partial_{t}\hat{G}+iv_{1}\eta \hat{G}-L\hat{G}+\frac{iv_{1}\eta }{1+\eta^2}P^1_0\hat{G}=0,\\
&\hat{G}(0,\eta)=I_{v}.\nnm
\eln\right.
\eq

We define the approximate sequence $\hat{J}_{k}$ for the Green's function $\hat{G}$ as follow:
\bq\label{26a}
\left\{\bln
&\partial_{t}\hat{J}_{0}+iv_{1}\eta \hat{J}_{0}+\nu(v)\hat{J}_{0}=0,\\
&(1+\eta^2)\hat{\Theta}_{0}=-(\hat{J}_{0},\chi_{0}),\\
&\hat{J}_{0}(0,\eta)=I_{v},
\eln\right.
\eq
and
\bq\label{26b}
\left\{\bln
&\partial_{t}\hat{J}_{k}+iv_{1}\eta \hat{J}_{k}+\nu(v)\hat{J}_{k}=K\hat{J}_{k-1}+iv_{1}\eta \chi_{0}\hat{\Theta}_{k-1},\\
&(1+\eta^2)\hat{\Theta}_{k}=-(\hat{J}_{k},\chi_{0}),\\
&\hat{J}_{k}(0,\eta)=0, \quad k\ge 1.
\eln\right.
\eq
%for $k\geq1$.

With the help of Lemma \ref{mvpbgf3}, we can show the pointwise estimates of the approximate sequence $J_{k}(t,x)$ and $\Theta_{k}(t,x)$ as follows:
\begin{lem}\label{mvpbgf4}
For each $k\geq0$, $\hat{J}_{k}(t,\eta)$ and $\hat{\Theta}_{k}(t,\eta)$ are analytic for $\eta\in D_{\nu_0}$, and satisfy
\bq
\|\hat{J}_{3k}(t,\eta)\|+|\eta\hat{\Theta}_{3k}(t,\eta)|\leq C_k(1+|\eta|)^{-k}e^{-\frac{\nu_{0}t}{2}},\label{32}
\eq
where $C_k>0$ is a constant dependent of $k$. In particular, it holds for $k\geq2$ that
\bq
\|J_{3k}(t,x)\|+|\dx \Theta_{3k}(t,x)|\leq C_ke^{-\frac{\nu_{0}(|x|+t)}{2}}.\label{33}
\eq
%where $C_k>0$ is a constant dependent of $k$.
\end{lem}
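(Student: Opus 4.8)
\textbf{Proof plan for Lemma \ref{mvpbgf4}.} The plan is to proceed by induction on $k$, exploiting the explicit recursive structure of $\hat J_k$ in \eqref{26a}--\eqref{26b} together with the Mixture Lemma \ref{mvpbgf3}. First I would solve the ODEs \eqref{26a}--\eqref{26b} by Duhamel's formula against the free kinetic semigroup $\hat S^t = e^{-[\nu(v)+iv_1\eta]t}$, obtaining
\bq
\hat J_k(t,\eta) = \int_0^t \hat S^{t-s}\Bigl(K\hat J_{k-1}(s,\eta) + iv_1\eta\,\chi_0\hat\Theta_{k-1}(s,\eta)\Bigr)\,ds, \nnm
\eq
with $\hat J_0(t,\eta)=\hat S^t I_v$ and $(1+\eta^2)\hat\Theta_k = -(\hat J_k,\chi_0)$. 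Unwinding the recursion, $\hat J_{3k}$ becomes a finite sum of terms: the "pure $K$" term is exactly the mixture operator $\hat{\mathbb M}^t_{3k}(\eta)$, whose bound $C_k(1+|\eta|)^{-k}(1+t)^{3k}e^{-\nu_0 t}$ is provided by \eqref{M1}; the remaining terms each contain at least one factor of the form $\hat S^{t-s}(iv_1\eta\,\chi_0)(1+\eta^2)^{-1}(\,\cdot\,,\chi_0)$ coming from the potential term $\hat\Theta$.

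The key observation for those remaining terms is that the potential factor carries a gain: the operator $g\mapsto (iv_1\eta)(1+\eta^2)^{-1}\chi_0(g,\chi_0)$ has norm bounded by $|\eta|(1+\eta^2)^{-1}\|v_1\chi_0\|\le C(1+|\eta|)^{-1}$ on $D_{\nu_0}$, so it behaves at least as well in $\eta$ as one factor of $K$ inside a mixture. Moreover the $v_1$ weight it introduces is harmless because $\nu(v)\ge\nu_0$ still dominates in the time decay (and one may peel off $e^{-\nu_0 t/2}$ to absorb any polynomial-in-$t$ growth, as in \eqref{M1} vs. \eqref{32}). Thus each mixed term can be estimated either by directly inserting this $(1+|\eta|)^{-1}$ gain and using $\|\hat S^t\|\le e^{-\nu_0 t}$ together with the already-established bound on the lower-index $\hat J_{k-1}$, or by recognising it as a product of a mixture operator $\hat{\mathbb M}^t_{j}$ with $3k-j$ extra "potential-type" factors, invoking \eqref{M1} for the mixture piece. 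Combining, every term in $\hat J_{3k}$ obeys $\le C_k(1+|\eta|)^{-k}(1+t)^{3k}e^{-\nu_0 t}\le C_k(1+|\eta|)^{-k}e^{-\nu_0 t/2}$, and the analogous argument bounds $|\eta\hat\Theta_{3k}|$ since $|\eta\hat\Theta_{3k}|=|\eta|(1+\eta^2)^{-1}|(\hat J_{3k},\chi_0)|\le C(1+|\eta|)^{-1}\|\hat J_{3k}\|$; this proves \eqref{32}. Analyticity in $\eta\in D_{\nu_0}$ is inherited at each step because $\hat S^t$ is entire in $\eta$ with $|\hat S^t|\le e^{-\nu_0 t}$ there, $K$ and $P^1_0$ are $\eta$-independent, and $(1+\eta^2)^{-1}$ is analytic on $D_{\nu_0}$ (the zeros $\eta=\pm i$ lie outside since $\nu_0<1$, or one restricts $\delta$ accordingly); the Duhamel integral of analytic integrands with locally uniform bounds is analytic.

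For the pointwise bound \eqref{33}, I would take $k\ge2$ so that $(1+|\eta|)^{-k}$ with $k\ge2$ is integrable on $\R_\eta$ after one more derivative, i.e. I write $J_{3k}(t,x)=\frac1{\sqrt{2\pi}}\int_\R e^{ix\eta}\hat J_{3k}(t,\eta)\,d\eta$ and shift the contour to $\eta = \xi + i\,\theta\,\mathrm{sgn}(x)$ for a small fixed $\theta\in(0,\nu_0)$, which is legitimate by the analyticity and decay just established. This produces the factor $e^{-\theta|x|}$ from $|e^{ix\eta}|=e^{-\theta|x|}$, while on the shifted line $\|\hat J_{3k}\|\le C_k(1+|\xi|)^{-k}e^{-\nu_0 t/2}$ remains integrable in $\xi$ for $k\ge2$; choosing $\theta$ and then splitting $e^{-\nu_0 t/2}$ to combine with $e^{-\theta|x|}$ gives $\|J_{3k}(t,x)\|\le C_k e^{-\nu_0(|x|+t)/2}$ after adjusting constants. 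The same contour shift applied to $\hat\Theta_{3k}$, using $|\eta\hat\Theta_{3k}|\le C(1+|\eta|)^{-1}\|\hat J_{3k}\|$ (which already has $k$ fixed, so again $k\ge2$ gives integrability), yields the bound on $|\partial_x\Theta_{3k}(t,x)|$. The main obstacle I anticipate is the bookkeeping in unwinding the recursion for the mixed potential terms — keeping track of how many $K$-factors versus potential-factors appear and verifying that each configuration still yields the full $(1+|\eta|)^{-k}$ gain with at most polynomial time growth; once that combinatorial structure is organised (using the $(1+|\eta|)^{-1}$ gain of the potential operator as a substitute for a $K$-factor in the Mixture Lemma), the rest is a routine application of Lemma \ref{mvpbgf3} and a contour shift.
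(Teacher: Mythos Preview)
Your approach is correct and matches the paper's proof: both unwind the recursion to write $\hat J_{3l}=\hat{\mathbb M}^t_{3l}+\sum_{n=0}^{3l-1}\int_0^t\hat{\mathbb M}^{t-s}_n(iv_1\eta\chi_0)\hat\Theta_{3l-n-1}(s)\,ds$, use the Mixture Lemma on the pure-$K$ piece and the $(1+|\eta|)^{-1}$ gain from $|\eta|(1+\eta^2)^{-1}$ on the potential pieces, and then prove \eqref{33} by shifting the Fourier contour into the strip of analyticity. The only cosmetic difference is that the paper shifts all the way to ${\rm Im}\,\eta=\nu_0$ rather than to a smaller $\theta$; your caution about the poles of $(1+\eta^2)^{-1}$ at $\pm i$ is well placed and is a point the paper leaves implicit.
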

\begin{proof}First, we want to show \eqref{32}.
For $k=0$, it follows from \eqref{26a} that $\hat{J}_{0}(t,\eta)$ and $\hat{\Theta}_{0}(t,\eta)$ are analytic for $\eta\in D_{\nu_0}$, and satisfy
\bmas
\| \hat{J}_{0}(t,\eta)\| &= \| e^{-[\nu(v)+iv_1\eta]t}\| \le e^{-\nu_0t},
\\
| \eta\hat{\Theta}_{0}(t,\eta)|&=  \frac{|\eta|}{1+\eta^2} | (\hat{J}_{0}(t,\eta),\chi_0) |\le  \frac{1}{1+|\eta|}e^{-\nu_0t}.
\emas

Suppose that \eqref{32} holds for $k\leq l-1$. From \eqref{26b}, it holds that for $l\geq1$,
\bmas
&\hat{J}_{3l}(t,\eta)=\mathbb{M}^{t}_{3l}(\eta)+\sum^{3l-1}_{n=0}\int^{t}_{0}\mathbb{M}^{t-s}_{n}(\eta)iv_{1}\eta \chi_{0}\hat{\Theta}_{3l-n-1}(s)ds,\\
&\hat{\Theta}_{3l}(t,\eta)=-\frac{1}{1+\eta^2}(\hat{J}_{3l}(t,\eta),\chi_{0}),
\emas
with $\mathbb{M}^{t}_{0}=\hat{S}^{t}$. Then, it follows from Lemma \ref{mvpbgf3} that $\hat{J}_{3l}(t,\eta)$ and $\hat{\Theta}_{3l}(t,\eta)$ are analytic for $\eta\in D_{\nu_0}$ and satisfy
\bq\label{J1}
\|\hat{J}_{3l}(t,\eta)\|+|\eta\hat{\Theta}_{3l}(t,\eta)|\leq C(1+|\eta|)^{-l}e^{-\frac{\nu_{0}t}{2}}.
\eq

Thus,  \eqref{32} holds for any $k\ge 0$.

Next, we want to show \eqref{33}. Let $\eta=\eta_{1}+i\eta_{2}$. For $x>0$, since $\hat{J}_{k}(t,\eta)$ and $\hat{\Theta}_{k}(t,\eta)$ are analytic for $\eta\in D_{\nu_0}$, we obtain by Cauchy Theorem that
\bma
&\quad\bigg\|J_{3k}(t,x)+\dx \Theta_{3k}(t,x)\bigg\| \nnm\\
&=C\bigg\|\int^{+\infty}_{-\infty}e^{ix\eta}\[\hat{J}_{3k}(t,\eta)+i\eta\hat{\Theta}_{3k}(t,\eta)\]d\eta\bigg\|\nnm\\
&=C\bigg\|\int^{+\infty}_{-\infty}e^{ix(\eta_1+i\nu_0)}\[\hat{J}_{3k}(t,\eta_1+i\nu_0)+i(\eta_1+i\nu_0)\hat{\Theta}_{3k}(t,\eta_1+i\nu_0)\]d\eta_1\bigg\|\nnm\\
&\leq C_k\int^{+\infty}_{-\infty}e^{-\nu_0x}\(1+|\eta_1+i\nu_0|\)^{-k}e^{-\frac{\nu_{0}t}{2}}d\eta_1\nnm\\
&\leq C_ke^{-\frac{\nu_{0}(x+t)}{2}},\label{jb1}
\ema
where $k\geq2$.  Similarly, we can also obtain \eqref{jb1} for $x<0$. The proof of the lemma is completed.
\end{proof}

We define
\bma
&W_{k}(t,x)=\sum^{3k}_{i=0}J_{i}(t,x), \quad\theta_{k}(t,x)=\sum^{3k}_{i=0}\Theta_{i}(t,x),\label{28}\\
&R_{k}(t,x)=(G-W_{k})(t,x),\quad\phi_{k}(t,x)=(T-\theta_{k})(t,x),\label{29}
\ema
where $T(t,x)=(I-\partial_{xx})^{-1}(G,\chi_{0})$.

It follows from \eqref{26a}--\eqref{26b} and \eqref{28}--\eqref{29} that $W_{k}(t,x)$ and $\theta_{k}(t,x)$ satisfy
\bq\label{47}
\left\{\bln
&\partial_{t}W_{k}+v_{1}\dx W_{k}-LW_{k}-v_{1}\chi_{0}\dx \theta_{k}=-KJ_{3k}-v_{1}\chi_{0}\dx \Theta_{3k},\\
&(I-\partial_{xx})\theta_{k}=-(W_{k},\chi_{0}),\\
&W_{k}(0,x)=\delta(x)I_{v},
\eln\right.
\eq
and $R_{k}(t,x)$ and $\phi_{k}(t,x)$ satisfy
\bq\label{48}
\left\{\bln
&\partial_{t}R_{k}+v_{1}\dx R_{k}-LR_{k}-v_{1}\chi_{0}\dx \phi_{k}=KJ_{3k}+v_{1}\chi_{0}\dx \Theta_{3k},\\
&(I-\partial_{xx})\phi_{k}=-(R_{k},\chi_{0}),\\
&R_{k}(0,x)=0.
\eln\right.
\eq

With the help of Lemma \ref{mvpbgf1} and Lemma \ref{mvpbgf4}, we can show the pointwise estimate of the term $G_H(t,x)-W_{k}(t,x)$ as follow.
\begin{lem}\label{mvpbgf5}
For any given $k\geq2$, there exists a constant $C>0$ such that
\bq
\|G_H(t,x)-W_{k}(t,x)\|\leq Ce^{-\frac{\nu_2t}{4}},
\eq
where $\nu_2=\min\{\kappa_0,\frac{\nu_0}{2}\}$, and $W_k(t,x)$ is the singular kinetic wave defined by \eqref{28}.
\end{lem}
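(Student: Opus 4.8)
The plan is to work in Fourier space and to split $G_H-W_k$ into a piece supported on the high-frequency set $\{|\eta|>r_0/2\}$ and a piece supported on the low-frequency set $\{|\eta|<r_0/2\}$. Write $H_L(t,x)$ and $H_H(t,x)$ for the functions obtained from $H(t,x)$ by restricting its Fourier transform to $\{|\eta|<r_0/2\}$ and $\{|\eta|>r_0/2\}$ respectively, as in \eqref{L-R}. Since $R_k=G-W_k$ holds pointwise in $\eta$ after Fourier transform and $G=G_L+G_H$, restricting to the two frequency regions yields $(R_k)_H=G_H-W_{k,H}$, hence the identity
\[
G_H(t,x)-W_k(t,x)=(R_k)_H(t,x)-W_{k,L}(t,x).
\]
It therefore suffices to establish $\|W_{k,L}(t,x)\|\le C_ke^{-\nu_0t/2}$ and $\|(R_k)_H(t,x)\|\le C_ke^{-\nu_2t/2}$ separately.

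For the low-frequency kinetic part I would first record the crude bound $\|\hat J_i(t,\eta)\|\le C_i(1+t)^ie^{-\nu_0t}$, valid for every $i\ge0$ and all $\eta\in D_{\nu_0}$; this follows by induction on $i$ from \eqref{26b}, using $\|\hat S^{t}\|\le e^{-\nu_0t}$ on $D_{\nu_0}$ and the elementary bound $|\eta\hat\Theta_i(t,\eta)|=\frac{|\eta|}{1+\eta^2}|(\hat J_i(t,\eta),\chi_0)|\le\|\hat J_i(t,\eta)\|$, which are exactly the ingredients already used in the proof of Lemma~\ref{mvpbgf4}. Since $W_{k,L}(t,x)=\frac1{\sqrt{2\pi}}\sum_{i=0}^{3k}\int_{|\eta|<r_0/2}e^{ix\eta}\hat J_i(t,\eta)\,d\eta$ and the $\eta$-domain is bounded, integrating these bounds gives at once $\|W_{k,L}(t,x)\|\le C_k(1+t)^{3k}e^{-\nu_0t}\le C_ke^{-\nu_0t/2}$.

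For the high-frequency remainder part I would go back to the equation \eqref{48} for $R_k$. Taking the Fourier transform in $x$ and using $(I-\partial_{xx})\phi_k=-(R_k,\chi_0)$ to eliminate $\phi_k$ turns the system into $\partial_t\hat R_k=\hat B(\eta)\hat R_k+\hat F_k$ with $\hat F_k(t,\eta)=K\hat J_{3k}(t,\eta)+iv_1\eta\chi_0\hat\Theta_{3k}(t,\eta)$ and $\hat R_k(0,\eta)=0$; Duhamel's formula then gives $\hat R_k(t,\eta)=\int_0^t\hat G(t-s,\eta)\hat F_k(s,\eta)\,ds$. On $\{|\eta|>r_0/2\}$ Lemma~\ref{mvpbgf1} supplies $\|\hat G(t-s,\eta)g\|\le Ce^{-\kappa_0(t-s)}\|g\|$, while Lemma~\ref{mvpbgf4} together with $\|v_1\chi_0\|=1$ gives $\|\hat F_k(s,\eta)\|\le C\|\hat J_{3k}(s,\eta)\|+C|\eta\hat\Theta_{3k}(s,\eta)|\le C_k(1+|\eta|)^{-k}e^{-\nu_0s/2}$. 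Hence
\[
\|(R_k)_H(t,x)\|\le C_k\Big(\int_{|\eta|>r_0/2}(1+|\eta|)^{-k}\,d\eta\Big)\int_0^te^{-\kappa_0(t-s)}e^{-\nu_0s/2}\,ds,
\]
and for $k\ge2$ the $\eta$-integral is finite while the time integral is at most $C(1+t)e^{-\nu_2t}\le Ce^{-\nu_2t/2}$ with $\nu_2=\min\{\kappa_0,\nu_0/2\}$. Combining this with the bound on $W_{k,L}$ and using $\nu_2\le\nu_0/2$ to absorb the polynomial factors into the exponential yields $\|G_H(t,x)-W_k(t,x)\|\le Ce^{-\nu_2t/4}$.

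The one step that genuinely requires care — and the structural reason the singular wave $W_k$ must contain $3k$ terms with $k\ge2$ rather than fewer — is the convergence of the $\eta$-integral over the unbounded set $\{|\eta|>r_0/2\}$: it rests entirely on the gain $(1+|\eta|)^{-k}$ in Lemma~\ref{mvpbgf4}, which only $3k$-fold mixtures of $K$ with the free-streaming semigroup $\hat S^t$ produce, and $k\ge2$ is precisely what makes $(1+|\eta|)^{-k}$ integrable on the line. Everything else is a routine combination of Duhamel's formula with the semigroup estimates of Lemmas~\ref{mvpbgf1} and~\ref{mvpbgf4}.
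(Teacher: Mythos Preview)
Your argument is correct, and it takes a slightly different route from the paper's. The paper works with $\hat W_k-\hat G_H$ globally in $\eta$: from $\hat G_L-\hat R_k=\hat W_k-\hat G_H$ and the Duhamel bound $\|\hat R_k\|_\eta\le C_k(1+|\eta|)^{-k}$ it obtains an $\eta$-decay estimate (with no time decay), while from Lemma~\ref{mvpbgf1} and Lemma~\ref{mvpbgf4} separately it obtains a time-decay estimate $\|\hat W_k\|+\|\hat G_H\|\le Ce^{-\nu_2 t}$ (with no $\eta$-decay). It then interpolates between these two bounds, taking the $\tfrac34$-power of the first and the $\tfrac14$-power of the second, to get the mixed estimate $\|\hat G_H-\hat W_k\|\le Ce^{-\nu_2 t/4}(1+|\eta|)^{-3k/4}$, which is integrable in $\eta$ for $k\ge2$.

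Your frequency splitting $G_H-W_k=(R_k)_H-W_{k,L}$ circumvents this interpolation: on $\{|\eta|>r_0/2\}$ you can use the high-frequency semigroup decay $e^{-\kappa_0(t-s)}$ from Lemma~\ref{mvpbgf1} directly inside Duhamel, picking up both time and $\eta$ decay at once; on $\{|\eta|<r_0/2\}$ the region is bounded so no $\eta$-decay is needed and the crude estimate $\|\hat J_i\|\le C_i(1+t)^ie^{-\nu_0 t}$ suffices. This is a cleaner and more elementary argument, and in fact yields the slightly sharper rate $e^{-\nu_2 t/2}$. One minor point: Lemma~\ref{mvpbgf1} is stated in the $\|\cdot\|_\eta$ norm rather than $\|\cdot\|$, but since $\|f\|^2\le\|f\|_\eta^2\le 2\|f\|^2$ uniformly in $\eta$ this passage is harmless and could be mentioned explicitly.
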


\begin{proof}
From Lemma \ref{mvpbgf1},  \eqref{32} and \eqref{48}, it holds that
\bma
\|\hat{R}_{k}(t,\eta)\|_{\eta}&=\bigg\|\int^t_0\hat{G}(t-s)[K\hat{J}_{3k}(s,\eta)+i\eta v_{1}\chi_{0}\hat{\Theta}_{3k}(s,\eta)]ds\bigg\|_{\eta}\nnm\\
&\leq C_{k}\int^t_0  (\|K\hat{J}_{3k}(s,\eta)\|_{\eta}+ | \eta\hat{\Theta}_{3k}(s,\eta) |) ds\nnm\\
&\leq C_{k}\int^t_0e^{-\frac{\nu_0s}{2}}\frac{1}{(1+|\eta|)^k}ds \leq C_{k}\frac{1}{(1+|\eta|)^k}.\label{GH1}
\ema

By \eqref{L-R} and \eqref{29}, we have
%$$\hat{G}_L(t,\eta)+\hat{G}_H(t,\eta)=\hat{W}_k(t,\eta)+\hat{R}_k(t,\eta),$$
%which gives
$$
\hat{G}_L(t,\eta)-\hat{R}_k(t,\eta)=\hat{W}_k(t,\eta)-\hat{G}_H(t,\eta).
$$

This together with Lemma \ref{mvpbgf1} and \eqref{GH1} implies that
\be\label{GH2}
\|\hat{W}_k(t,\eta)-\hat{G}_H(t,\eta)\|\leq \|\hat{G}_L(t,\eta)\|+\|\hat{R}_k(t,\eta)\|\leq C_k\frac{1}{(1+|\eta|)^k}.
\ee

Moreover, we obtain from Lemma \ref{mvpbgf1} and Lemma \ref{mvpbgf4} that
\be\label{GH3}
\|\hat{W}_k(t,\eta)\|+\|\hat{G}_H(t,\eta)\|\leq C_ke^{-\nu_2t},
\ee
where $\nu_2=\min\{\kappa_0,\frac{\nu_0}{2}\}$.

Thus, it follows from \eqref{GH2} and \eqref{GH3} that
$$
\|\hat{G}_H(t,\eta)-\hat{W}_k(t,\eta)\|\leq Ce^{-\frac{\nu_2t}{4}}\frac{1}{(1+|\eta|)^{\frac{3k}{4}}},
$$
which gives
\bma
\|G_H(t,x)-W_k(t,x)\|&=C\bigg\|\int_{\R}e^{ix\eta}[\hat{G}_H(t,\eta)-\hat{W}_k(t,\eta)]ds\bigg\|\nnm\\
&\leq C\int_{\R}e^{-\frac{\nu_2t}{4}}\frac{1}{(1+|\eta|)^\frac{3k}{4}}ds \leq Ce^{-\frac{\nu_2t}{4}},
\ema
for $k\geq2$. The proof is completed.
\end{proof}

%%%%%%%%%%%%%%%%%%%%%%%%%%%%%%%%%%%%%%%%%%%%%%%%%%%%%%%%%%%%%%%%%%%%%%%%%%%%%%%%%%%%%%%%%%%%%%%%%%%%%%%%%%%%%%%%%%%%%%%%%%%%%%%%%%%%%%%%%%%%%%%%%
%                                                                                                                                               %
%                                                                                                                                               %
%                GREEN FUNCTION OUTSIDE THE FINITE MACH NUMBER                                                                                  %
%                                                                                                                                               %
%                                                                                                                                               %
%%%%%%%%%%%%%%%%%%%%%%%%%%%%%%%%%%%%%%%%%%%%%%%%%%%%%%%%%%%%%%%%%%%%%%%%%%%%%%%%%%%%%%%%%%%%%%%%%%%%%%%%%%%%%%%%%%%%%%%%%%%%%%%%%%%%%%%%%%%%%%%%%

%We establish the pointwise space-time estimates of the term $G(t,x)-W_k(t,x)$ outside Mach region as follows. First of all, we have

\begin{lem}\label{mvpbgf6}
For any $f=f(x)\in L^2(\R_x)$ and $0<\delta<2$, it holds that
\bq\label{out2}
\int_{\mathbb{R}}|\partial^\alpha_x(I-\partial_{xx})^{-1}f|^2e^{\delta|x|}dx\leq \frac4{(2-\delta)^2}\int_{\mathbb{R}}|f|^2e^{\delta|x|}dx,
\eq
where $\alpha=0,1$.
\end{lem}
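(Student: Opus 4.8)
The plan is to use the explicit Green's kernel of $I-\partial_{xx}$ on $\R$. Writing $u:=(I-\partial_{xx})^{-1}f$, one has $u-u''=f$, hence the representation
\[
u(x)=\frac12\int_{\R}e^{-|x-y|}f(y)\,dy,\qquad u'(x)=-\frac12\int_{\R}\mathrm{sgn}(x-y)e^{-|x-y|}f(y)\,dy .
\]
In particular both $|u(x)|$ and $|u'(x)|$ are pointwise dominated by $\tfrac12(e^{-|\cdot|}\ast|f|)(x)$, so for $\alpha=0,1$ it suffices to prove
\[
\int_{\R}\Big|(e^{-|\cdot|}\ast|f|)(x)\Big|^2 e^{\delta|x|}\,dx\le\frac{16}{(2-\delta)^2}\int_{\R}|f|^2 e^{\delta|x|}\,dx ,
\]
since the factor $\tfrac14=\big(\tfrac12\big)^2$ then yields the stated constant $\tfrac{4}{(2-\delta)^2}$. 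We may assume the right-hand side of \eqref{out2} is finite, so that $g(y):=|f(y)|e^{\frac{\delta}{2}|y|}$ lies in $L^2(\R)$.

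The key step is to push the exponential weight into the convolution kernel. Since
\[
e^{\frac{\delta}{2}|x|}(e^{-|\cdot|}\ast|f|)(x)=\int_{\R}e^{\frac{\delta}{2}|x|-|x-y|-\frac{\delta}{2}|y|}g(y)\,dy ,
\]
and the triangle inequality $|x|\le|x-y|+|y|$ combined with $\delta/2<1$ gives
\[
\frac{\delta}{2}|x|-|x-y|-\frac{\delta}{2}|y|\le\Big(\frac{\delta}{2}-1\Big)|x-y|=-\frac{2-\delta}{2}|x-y| ,
\]
we obtain $e^{\frac{\delta}{2}|x|}(e^{-|\cdot|}\ast|f|)(x)\le(\phi\ast g)(x)$ with $\phi(z):=e^{-\frac{2-\delta}{2}|z|}$. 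Then I would apply Young's convolution inequality $\|\phi\ast g\|_{L^2(\R)}\le\|\phi\|_{L^1(\R)}\|g\|_{L^2(\R)}$; since $0<\delta<2$ we have $\phi\in L^1(\R)$ with $\|\phi\|_{L^1}=\frac{4}{2-\delta}$. Squaring and recalling $\|g\|_{L^2}^2=\int_{\R}|f|^2e^{\delta|x|}\,dx$ yields the displayed intermediate bound, and hence the lemma.

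The argument is essentially computational and I expect no serious analytic obstacle, but two points deserve care. First, a naive weighted energy estimate — multiplying $u-u''=f$ by $u\,e^{\delta|x|}$ and integrating by parts — does \emph{not} close: the kink of $|x|$ at the origin generates a boundary term $-\delta u(0)^2$ together with a term $-\tfrac{\delta^2}{2}\int_{\R} u^2 e^{\delta|x|}\,dx$, and the resulting coefficient $1-\tfrac{\delta^2}{2}$ of $\int_{\R} u^2e^{\delta|x|}\,dx$ fails to be coercive as $\delta\uparrow 2$; the explicit kernel is precisely what captures the sharp exponential rate $\tfrac{2-\delta}{2}$, hence the sharp constant. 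Second, one should record the routine justifications — differentiation under the integral sign for the formula for $u'$, and the legitimacy of the convolution manipulations once the right-hand side of \eqref{out2} is assumed finite.
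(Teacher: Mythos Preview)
Your proof is correct and follows essentially the same approach as the paper: both use the explicit Green's kernel $\tfrac12 e^{-|x|}$ of $(I-\partial_{xx})^{-1}$, push the weight $e^{\delta|x|/2}$ through via the triangle inequality to obtain the modified kernel $e^{-(1-\delta/2)|x-y|}$, and then compute its $L^1$ norm $\tfrac{4}{2-\delta}$. The only cosmetic difference is that the paper does the final step by a Cauchy--Schwarz splitting of the inner integral followed by Fubini, whereas you package the same computation as Young's convolution inequality; the constants and the logic are identical.
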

\begin{proof}
First, we know that the Green's function of $(I-\partial_{xx})^{-1}$ is $\frac{1}{2}e^{-|x|}$. Let $\alpha=0$. By H$\ddot{\mathrm{o}}$lder's inequality and Fubini's Theorem, we have
\bma
&\quad\int_{\mathbb{R}}|(I-\partial_{xx})^{-1}f|^2e^{\delta|x|}dx \nnm\\
&=\frac{1}{4}\int_{\mathbb{R}}\[\int_{\mathbb{R}}e^{-|x-y|}f(y)dy\]^2 e^{\delta|x|}dx\nnm\\
%&\leq \int_{\mathbb{R}}\[\int_{\mathbb{R}}e^{-(1+\frac{\delta}2)|x-y|}e^{-\delta|y|}dy\int_{\mathbb{R}}e^{-(1-\frac{\delta}2)|x-y|}e^{\delta|y|}f(y)^2dy\]e^{\delta|x|}dx\nnm\\
&\le \frac{1}{4}\int_{\mathbb{R}}\(\int_{\mathbb{R}}e^{-(1+\frac{\delta}2)|x-y|}e^{-\delta|y|}dye^{\delta|x|}\)\(\int_{\mathbb{R}}e^{-(1-\frac{\delta}2)|x-y|}e^{\delta|y|}|f(y)|^2dy\)dx\nnm\\
&\le \frac{1}{4}\int_{\mathbb{R}}\(\int_{\mathbb{R}}e^{-(1-\frac{\delta}2)|x-y|} dy \)\(\int_{\mathbb{R}}e^{-(1-\frac{\delta}2)|x-y|}e^{\delta|y|}|f(y)|^2dy\)dx\nnm\\
&\leq \frac1{(2-\delta)}\int_{\mathbb{R}}e^{\delta|y|}|f(y)|^2\[\int_{\mathbb{R}}e^{-(1-\frac{\delta}2)|x-y|}dx\]dy\nnm\\
&\leq \frac{4}{(2-\delta)^2}\int_{\mathbb{R}}e^{\delta|y|}|f(y)|^2dy.
\ema

For $\alpha=1$, we can obtain \eqref{out2} by using a computation similar to that of $\alpha=0$.
This proves the lemma.
\end{proof}

With the help of Lemma \ref{mvpbgf6}, we have the pointwise space-time estimates of the remainder terms $R_k(t,x)$ and $\phi_k(t,x)$  outside  Mach number region as follows.
\begin{lem}\label{mvpbgf7}
Given any constant $k\geq2$, there exist constants $C,D>0$  such that for  $|x|>6t$,
\bq
\|R_{k}(t,x)\|+| \partial_x  \phi_k(t,x)|\leq Ce^{-\frac{|x|+t}{D}},\label{53}
\eq
where $R_{k}(t,x)$ and $\phi_k(t,x)$ are defined by \eqref{29}.
\end{lem}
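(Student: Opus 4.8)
The plan is to estimate $R_k$ and $\phi_k$ in the region $|x|>6t$ by the weighted energy method applied to system \eqref{48}, using the weight $w(t,x)=e^{2\epsilon(|x|-Yt)}$ for suitable small $\epsilon>0$ and a large propagation speed $Y$ (to be chosen with $6>Y$ so that $|x|>6t$ lies strictly inside the good region of the weight, and small enough so that $\epsilon|v_1|<\nu_0$ for the $v_1$-transport term to be absorbed by the collision frequency via \eqref{nuv}). First I would multiply the equation for $R_k$ by $R_k e^{2\epsilon(|x|-Yt)}$ (in the $L^2_{x,v}$ inner product), integrate by parts in $x$, and use the local coercivity \eqref{L_3} together with $(Lf,f)\le -\mu\|P_1f\|^2$ to control the collision term; the transport term $v_1\dx R_k$ produces, after integration by parts, a factor $\epsilon v_1\,\mathrm{sgn}(x)$ times the weighted $L^2$ norm, which is dominated by $\nu(v)\ge\nu_0$ once $\epsilon$ is chosen so that $\epsilon|v_1|\le\nu_0/2$ on the relevant support — more precisely one splits velocities $|v_1|\le N$ and $|v_1|>N$ and uses the Gaussian-type decay encoded in $\nu(v)$ for large $|v_1|$. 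The electric-potential term $v_1\chi_0\dx\phi_k$ is handled by Cauchy–Schwarz after invoking Lemma \ref{mvpbgf6} with $\delta=2\epsilon<2$: since $(I-\partial_{xx})\phi_k=-(R_k,\chi_0)$, the weighted $L^2$ norm of $\dx\phi_k$ and of $\phi_k$ is bounded by that of $(R_k,\chi_0)$, hence by the weighted $L^2$ norm of $P^1_0R_k$, which can be absorbed.

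Next I would treat the source terms $KJ_{3k}+v_1\chi_0\dx\Theta_{3k}$. Here the key input is Lemma \ref{mvpbgf4}: the bound \eqref{33} gives $\|J_{3k}(t,x)\|+|\dx\Theta_{3k}(t,x)|\le C_k e^{-\nu_0(|x|+t)/2}$ for $k\ge2$, so the weighted $L^2_{x,v}$ norm of the source is finite and decays like $e^{-c t}$ provided $2\epsilon Y<\nu_0/2$ and $2\epsilon<\nu_0/2$ (the spatial exponential $e^{-\nu_0|x|/2}$ beats $e^{2\epsilon|x|}$). Combining these, a Grönwall argument on the weighted energy $\mathcal{E}(t)=\int_{\R\times\R^3}|R_k(t,x,v)|^2 e^{2\epsilon(|x|-Yt)}\,dx\,dv$ yields $\mathcal{E}(t)\le C e^{-ct}$, i.e.
$$
\int_{\R\times\R^3}\big(|R_k(t,x,v)|^2+|\dx\phi_k(t,x)|^2\big)e^{2\epsilon(|x|-Yt)}\,dx\,dv\le C e^{-ct}.
$$
To pass from this weighted $L^2$ bound to the pointwise bound \eqref{53}, one uses that on $\{|x|>6t\}$ we have $|x|-Yt\ge (1-Y/6)|x|\ge c_1(|x|+t)$ (using $6>Y$ and $|x|>6t$), and then upgrades $L^2$ in $v$ to the operator norm via a standard regularity/Sobolev argument in $v$ (or one keeps the operator form throughout, as elsewhere in the paper, estimating $\|R_k(t,x)\|$ directly); the $v$-derivatives needed are controlled through the $\nabla_v$ estimate in \eqref{mix2} and by differentiating \eqref{48} in $v$, which only adds lower-order terms because $[\dv,v_1\dx]=\dx$ and $\dv\nu$, $\dv K$ are again bounded operators of the same type.

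The main obstacle is the transport term $v_1\dx R_k$ inside the weighted estimate: the weight's spatial derivative contributes $\epsilon\,\mathrm{sgn}(x)\,v_1$, which is unbounded in $v_1$, so it cannot simply be absorbed by a fixed fraction of $\nu(v)$ uniformly in $v$. The resolution — and the technical heart of the proof — is the velocity splitting: for $|v_1|$ bounded one chooses $\epsilon$ small so that $\epsilon|v_1|\le\nu_0/2\le\nu(v)/2$, while for $|v_1|$ large the loss is compensated by the linear growth $\nu(v)\ge\nu_0(1+|v|)$ from \eqref{nuv}, so that $\epsilon|v_1|-\nu(v)\le -\nu_0(1+|v|)/2$ once $\epsilon\le\nu_0/2$. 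A secondary subtlety is the coupling with $\phi_k$: one must check that the constant $\tfrac{4}{(2-\delta)^2}$ from Lemma \ref{mvpbgf6} times the (small) coefficient in front of $\dx\phi_k$ is still absorbable into the coercive term $\mu\|P_1R_k\|^2$ plus the remaining positive weighted mass — this dictates how small $\epsilon$ and the parameters must be chosen, but involves no essential difficulty once the velocity splitting is in place.
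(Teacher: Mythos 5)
Your overall architecture is right: weighted energy method on \eqref{48} with weight $e^{\epsilon(|x|-Yt)}$, source controlled via \eqref{33}, electric-field estimates via Lemma \ref{mvpbgf6}, Gronwall, then Sobolev in $x$, and finally $|x|-Yt\geq c(|x|+t)$ on $\{|x|>6t\}$. But there are two genuine gaps in how you close the estimate, and the second one is fatal for the argument as you have stated it.

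First, the transport term. After integration by parts you get $\tfrac{\epsilon}{2}\int(\tfrac{x}{|x|}v_1R_k,R_k)w\,dx$, and you propose to dominate the whole thing by $\nu(v)$ via a velocity cutoff. That works for the $P_1$--$P_1$ piece, since $|(v_1P_1R_k,P_1R_k)|\lesssim \|\nu^{1/2}P_1R_k\|^2$ is indeed absorbed by $C\epsilon\,(-LR_k,R_k)$. But there is no $\nu(v)$ dissipation for the macroscopic component: $LP_0R_k=0$, and the coercivity \eqref{L_3} says nothing about $P_0R_k$. The macro--macro piece $(v_1P_0R_k,P_0R_k)$ must instead be controlled by the term $\tfrac{\epsilon Y}{2}\int\|R_k\|^2w\,dx$ coming from $\partial_tw=-\epsilon Yw$; the paper does this using $|(v_1P_0f,P_0f)|\le\sqrt{5/3}\,\|P_0f\|^2$ (Lemma~5.3 of Liu--Yu \cite{LIU-2}), which pins down the admissible range of $Y$. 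You mention choosing $Y$, but you attribute the absorption to $\nu(v)$, which is not what makes the macro part work.

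Second, and more seriously, the electric-field coupling. You propose to bound $-\int\dx\phi_k(R_k,v_1\chi_0)w\,dx$ directly by Cauchy--Schwarz plus Lemma~\ref{mvpbgf6}. This gives a term of size $C\int\|P_0R_k\|^2w\,dx$ with \emph{no} small factor, i.e. it is $O(1)$ relative to the energy. The only positive term available to absorb it, $\tfrac{\epsilon Y}{2}\int\|R_k\|^2w\,dx$, is $O(\epsilon)$, so the Gronwall inequality would read $\tfrac{d}{dt}\mathcal{E}\lesssim \mathcal{E}+\text{source}$ and gives exponential \emph{growth}, not decay. The essential idea you are missing is that this term is, up to $O(\epsilon)$ corrections and sources, a total time derivative: integrating by parts in $x$ and then using the conservation identity $\partial_t(R_k,\chi_0)+(\dx R_k,\chi_1)=(KJ_{3k},\chi_0)$ together with $(I-\partial_{xx})\phi_k=-(R_k,\chi_0)$ rewrites the coupling term as
$$
\frac12\frac{d}{dt}\int_{\R}\big(|\phi_k|^2+|\dx\phi_k|^2\big)w\,dx
$$
plus terms that carry an $\epsilon$ or a source factor (see the chain of manipulations from \eqref{G-} through \eqref{t2} in the paper). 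One therefore has to augment the energy functional to $\int(\|R_k\|^2+|\phi_k|^2+|\dx\phi_k|^2)w\,dx$; only then does the weighted energy inequality close. Without this structural observation the argument does not work.

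A minor point: the final upgrade from $L^2_x$ to pointwise is Sobolev in $x$ (one additionally proves $\int\|\dx R_k\|^2w\,dx\le C$, not a $v$-regularity estimate), so the appeal to $\nabla_v$ and \eqref{mix2} is misplaced there.
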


\begin{proof}
We use the weighted energy method. Set
\bq\label{52}
w=e^{\eps (|x|-Yt)},
\eq
where $0<\eps <1$ is a sufficient small constant and $Y>1$ is determined later. It holds that
$$
\partial_{t}w=-\eps Yw,\quad\dx w=\eps \frac{x}{|x|}w.
$$

Taking the inner product between \eqref{48} and $R_{k}w$, and integrate it over $x$, we have
\bma
&\frac12\Dt\int_{\mathbb{R}}\|R_{k}\|^{2}wdx+\frac{\eps Y}{2}\int_{\mathbb{R}}\|R_k\|^2wdx-\frac{\eps}{2} \int_{\mathbb{R}}\(\frac{x}{|x|}v_{1}R_{k},R_{k}\)wdx \nnm\\
&-\int_{\mathbb{R}}\dx \phi_{k}(R_{k}, v_{1}\chi_{0})wdx-\int_{\mathbb{R}}(LR_{k},R_{k})wdx \nnm\\
=&2\int_{\mathbb{R}}(KJ_{3k},R_{k})w dx+2\int_{\mathbb{R}}(v_{1}\chi_{0}\dx \Theta_{3k},R_{k})wdx. \label{49}
%\leq&\int_{\mathbb{R}}(C_{\delta}\|KJ_{3k}\|^{2}+\delta\|R_{k}\|^2)wdx+\int_{\mathbb{R}}(C_{\delta}|\dx \Theta_{3k}|^{2}+\delta\|P_{0}R_{k}\|^2)wdx.
\ema

By Lemma 5.3 in \cite{LIU-2}, we have
$$
|(v_1P_0f,P_0f)|\leq \sqrt{\frac{5}{3}}(P_0f,P_0f),
$$
which leads to
\bma
&\quad\eps \int_{\mathbb{R}}\(\frac{x}{|x|}v_{1}R_{k},R_{k}\)wdx \nnm\\
&\le \eps \int_{\mathbb{R}}|(v_{1}P_0R_{k},P_0R_{k}) |wdx+2\eps \int_{\mathbb{R}}|(v_{1}P_0R_{k},P_1R_{k})| wdx\nnm\\
&\quad+\eps \int_{\mathbb{R}}|(v_{1}P_1R_{k},P_1R_{k})|wdx \nnm\\
&\leq  \eps \frac54\sqrt{\frac{5}{3}}\int_{\R}\|P_0R_k\|^2wdx+C\eps \int_{\R}(-LR_k,R_k)wdx.\label{G+}
\ema

 By partial integration, we have
\bq\label{G-}
-\int_{\mathbb{R}}\dx \phi_{k}(R_{k}, v_{1}\chi_{0})wdx=\int_{\R}\phi_k(\dx R_k,\chi_1)wdx+\int_{\R}\phi_k\(\chi_1,R_k\)\dx wdx.
\eq

Taking the inner product between $\eqref{48}_1$ and $\chi_0$, we have
\bq\label{N1}
(\partial_tR_{k},\chi_0)+(\dx R_{k},\chi_1)=(KJ_{3k},\chi_0),
\eq
which together with \eqref{G-} implies that
\be
\int_{\R}\phi_k(\dx R_k,\chi_1)wdx=\int_{\mathbb{R}}\phi_{k}(KJ_{3k},\chi_0)wdx-\int_{\mathbb{R}}\phi_{k}(\partial_tR_{k},\chi_0)wdx .\label{GRT1}
\ee

The term $\int_{\mathbb{R}}\phi_{k}(\partial_tR_{k},\chi_0)wdx$ can be estimated as follows. By $\eqref{48}_2$, we have
\bma
&\quad-\int_{\mathbb{R}}\phi_{k}(\partial_tR_{k},\chi_0)wdx\nnm\\
&=\int_{\mathbb{R}}\phi_{k}(\partial_t\phi_k-\partial_t\partial_{xx}\phi_k)wdx\nnm\\
&=\frac{1}{2}\int_{\R}\partial_t|\phi_k|^2wdx+\frac12\int_{\R} \partial_t|\dx \phi_k|^2wdx+\int_{\R}\phi_k \partial_t\dx \phi_k\dx wdx\nnm\\
&=\frac{1}{2}\Dt\int_{\R}(|\phi_k|^2+|\dx \phi_k|^2)wdx+\frac{\eps Y}{2}\int_{\R}(|\phi_k|^2+|\dx \phi_k|^2) wdx\nnm\\
&\quad+\int_{\R}\phi_k\partial_t\dx \phi_k\dx wdx. \label{47}
\ema

%%%%%%%%%%%%%%%%%%%%%%%%%%%%%%%%%%%%%%%%%%%%%%%%%%%%%%%%%%%%%%%%%%%%%%%%%%%%%%%%%%%%%%%%%%%%%%%%%%%%%%%%%%%%%%%%%%%%%%%%%%%%%%%%%%%%%%%%%%%%%%%%%%%%%%%%%%%%%%%%%%
%%%%%%%%%%%%%%%%%%%%%%%%%%%%%%%%%%%%%%%%%%%%%%%%%%%%%%%%%%%%%%%%%%%%%%%%%%%%%%%%%%%%%%%%%%%%%%%%%%%%%%%%%%%%%%%%%%%%%%%%%%%%%%%%%%%%%%%%%%%%%%%%%%%%%%%%%%%%%%%%%%

For the last term in the r.h.s. of \eqref{47}, we obtain by \eqref{N1} and $\eqref{48}_2$ that
$$-\partial_t\dx \phi_{k}+\pt_{xx}(I-\partial_{xx})^{-1}(R_{k},\chi_1)=\pt_x(I-\partial_{xx})^{-1}(KJ_{3k},\chi_1),$$
which together with Lemma \ref{mvpbgf6} gives rise to
\bma
&\quad\int_{\mathbb{R}}(\partial_t\dx \phi_{k})\phi_{k}\dx wdx\nnm\\
&=- \int_{\mathbb{R}}(R_{k},\chi_1)\phi_{k}\dx wdx+ \int_{\mathbb{R}}[(I-\partial_{xx})^{-1}(R_{k},\chi_1)]\phi_{k}\dx wdx \nnm\\
&\quad- \int_{\mathbb{R}}\pt_x(I-\partial_{xx})^{-1}(KJ_{3k},\chi_1) \phi_{k}\dx wdx \nnm\\
&\le \frac{\eps}2\int_{\mathbb{R}}(|(R_k,\chi_1)|^2+|(I-\partial_{xx})^{-1}(R_{k},\chi_1)|^2)wdx\nnm\\
&\quad+\frac{\eps}2\int_{\mathbb{R}} |\dx(I-\partial_{xx})^{-1}(KJ_{3k},\chi_1)|^2wdx+\frac{3\eps}{2}\int_{\mathbb{R}}|\phi_k|^2wdx\nnm\\
&\le  \frac{2\eps}{(2-\eps)^2}\int_{\mathbb{R}}(2\|R_k\|^2 +\|KJ_{3k}\|^2 )wdx+\frac{3\eps}{2}\int_{\mathbb{R}}|\phi_k|^2wdx. \label{t1}
\ema

 By \eqref{G-}--\eqref{t1}, we have
\bma
&\quad-\int_{\mathbb{R}}\dx \phi_{k}(R_{k}, v_{1}\chi_{0})wdx
\nnm\\
%&=\int_{\mathbb{R}}\phi_{k}(KJ_{3k},\chi_0)wdx+\frac12\Dt\int_{\R}(|\phi_k|^2+|\dx \phi_k|^2)wdx \nnm\\
%&\quad+\frac{\eps Y}{2} \int_{\R}(|\phi_k|^2+|\dx \phi_k|^2)wdx+\int_{\mathbb{R}}(\partial_t\dx \phi_{k})\phi_{k}\dx wdx\nnm\\
%&\quad+\eps \int_{\mathbb{R}}\phi_{k}(R_k,\chi_1)\dx wdx\\
&\ge \frac12\Dt\int_{\R}(|\phi_k|^2+|\dx \phi_k|^2)wdx +\frac{\eps (Y-4)}{2} \int_{\R}(|\phi_k|^2+|\dx \phi_k|^2)wdx\nnm\\
&\quad-\frac{4\eps}{(2-\eps)^2} \int_{\mathbb{R}} \|R_k\|^2wdx -\frac{C}{\eps}\int_{\mathbb{R}}\|J_{3k}\|^2 wdx, \label{t2}
\ema
where we had used
$$\int_{\mathbb{R}}\phi_{k}(KJ_{3k},\chi_0)wdx \le \frac{\eps}{2}  \int_{\mathbb{R}}|\phi_{k}|^2dx+\frac{C}{\eps}\int_{\mathbb{R}}\|J_{3k}\|^2 wdx.$$

From  \eqref{G+}--\eqref{t2} and \eqref{49}, it holds that for $0<\eps\ll 1$ and $Y=5$,
\bma
&\Dt\int_{\mathbb{R}}(\|R_{k}\|^{2}+|\phi_k|^2+|\dx \phi_k|^2)wdx+\eps\int_{\mathbb{R}}(\|R_{k}\|^{2}+|\phi_k|^2+|\dx \phi_k|^2)wdx \nnm\\
&\leq \frac{C}{\eps}\int_{\mathbb{R}}(\|J_{3k}\|^{2}+|\dx \Theta_{3k}|^{2})wdx .\label{dt}
\ema
Applying Gronwall's inequality to \eqref{dt}, we have
\be
\int_{\mathbb{R}}(\|R_{k}\|^{2}+|\phi_k|^2+|\dx \phi_k|^2)wdx\le C\intt\int_{\mathbb{R}} e^{-  \eps (t-s) } (\|J_{3k}\|^{2}+|\dx \Theta_{3k}|^{2})w dx . \label{r6}
\ee

We claim that for $0\le b\le \nu_0$ and $k\ge \alpha+1$,
\be
\int_{\mathbb{R}} e^{b|x|}(\|\dxa J_{3k}\|^{2}+|\dx^{\alpha+1}\Theta_{3k}|^{2})dx\le Ce^{-\nu_0 t} . \label{kkk}%\quad \intra  e^{\eps|x|}\|L^j_{3n+1}\|^2 dx\le C.
\ee
 Assume that $\hat{V}(\eta)$ is analytic for $\eta\in D_{\delta}$ with $\delta>0$ and satisfies that $|\eta|^{\alpha}|\hat{V}(\eta)|\rightarrow0$ as $|\eta|\rightarrow\infty$. Since
$$
e^{bx}\partial_x^{\alpha}V(x)=C\int^{+\infty}_{-\infty}e^{ix\eta+bx}\eta^{\alpha}\hat{V}(\eta)d\eta=C\int^{+\infty}_{-\infty}e^{ixu}(u-ib)^{\alpha}\hat{V}(u-ib)du
$$
for $x\in\R$ and $0<b<\delta$, it follows that
\bq
e^{b|x|}\partial_x^{\alpha}V(x)=
\left\{\bln
&\mathcal{F}^{-1}\((u-ib)^{\alpha}\hat{V}(u-ib)\),\ x\geq0\\
&\mathcal{F}^{-1}\((u+ib)^{\alpha}\hat{V}(u+ib)\),\ x<0.\nnm
\eln\right.
\eq
By Parseval's equality, we obtain
$$
\int_{-\infty}^{+\infty}e^{2b|x|}\|\partial_x^{\alpha}V(x)\|^2dx\leq C\int_{-\infty}^{+\infty}|u\pm ib|^{2\alpha}\|\hat{V}(u\pm ib)\|^2du.
$$
This together with Lemma \ref{mvpbgf4} implies that for $k\geq1+\alpha$ and $0\leq2\epsilon\leq\nu_0$,
\bmas
&\quad\int_{-\infty}^{+\infty}(\|\dxa J_{3k}\|^{2}+|\dx^{\alpha+1}\Theta_{3k}|^{2})e^{2\epsilon|x|}dx\nnm\\
&\leq C\int_{-\infty}^{+\infty}|u\pm i\eps|^{2\alpha}(\|\hat{J}_{3k}(t,u\pm i\eps)\|^{2}+|u\pm i\eps|^{2}|\hat{\Theta}_{3k}(t,u\pm i\eps)|^{2})du\\
&\leq C\int_{-\infty}^{+\infty}e^{-\nu_0t}(1+|u\pm i\eps|)^{{2\alpha}-2k}du\leq Ce^{-\nu_0t},
\emas
which proves \eqref{kkk}. Thus, it follows from \eqref{kkk} and \eqref{r6} that
\bq\label{50}
\int_{\mathbb{R}}(\| R_{k}\|^{2}+| \phi_k|^2+|\dx \phi_k|^2)wdx\leq C.
\eq
Similarly,
\bq\label{51}
\int_{\mathbb{R}}(\|\dx R_{k}\|^{2}+|\dx \phi_k|^2+|\dx^2 \phi_k|^2)wdx\leq C.
\eq

Thus, by \eqref{50}, \eqref{51}  and Sobolev's embedding theorem, we have
\bq\label{S2}
e^{\eps (|x|-Yt)}(\|R_k(t,x)\|+|\dx \phi_k(t,x)|)\leq C.
\eq

For $|x|>6t$,  it holds that
\be
|x|-Yt=\frac{|x|}{8}+\frac{7|x|}{8}-5t>\frac{|x|}{8}+\frac t8. \label{S3}
\ee

By \eqref{S2} and \eqref{S3}, we prove \eqref{53}.
\end{proof}

%%%%%%%%%%%%%%%%%%%%%%%%%%%%%%%%%%%%%%%%%%%%%%%%%%%%%%%%%%%定理1.1的证明%%%%%%%%%%%%%%%%%%%%%%%%%%%%%%%%%%%%%%%%%%%%%%%%%%%%%%%%%%%%%%%%%%%%%%%%%%%%%%%%%%%%%%
With the help of Lemma \ref{mvpbgf1}--\ref{mvpbgf2}, Lemma \ref{mvpbgf5} and Lemma \ref{mvpbgf7}, we can prove Theorem \ref{mvpbth1} as follows.

\medskip

\noindent\emph{Proof of Theorem \ref{mvpbth1}.} By \eqref{L-R}--\eqref{L-R2}, we can decompose $G(t,x)$ into
\bma
G(t,x)&=[G(t,x)-W_2(t,x)]1_{\{|x|\leq 6t\}}+[G(t,x)-W_2(t,x)]1_{\{|x|> 6t\}}+W_2(t,x)\nnm\\
%&=[G_{L}(t,x)+G_{H}(t,x)-W_2(t,x)]1_{|x|\leq 2\beta_1t}+[G(t,x)-W_2(t,x)]1_{|x|> 2\beta_1t}+W_2(t,x)\nnm\\
&=G_{L,0}(t,x)1_{\{|x|\leq 6t\}}+[G_{L,1}(t,x)+G_{H}(t,x)-W_2(t,x)]1_{\{|x|\leq 6t\}}\nnm\\
&\quad+R_2(t,x) 1_{\{|x|> 6t\}}+W_2(t,x).
\ema

Thus
\bq
G(t,x)=G_1(t,x)+G_2(t,x)+W_2(x,v),
\eq
where
\bma
G_1(t,x)&=G_{L,0}(t,x)1_{\{|x|\leq 6t\}},\label{G1xt}\\
G_2(t,x)&=[G_{L,1}(t,x)+G_{H}(t,x)-W_2(t,x)]1_{\{|x|\leq 6t\}}+R_2(t,x) 1_{\{|x|> 6t\}},
\ema
\eqref{Thm1} directly follows from Lemma \ref{mvpbgf2}.
By Lemma \ref{mvpbgf1}, we have for $|x|\leq 6t$,
\bq\label{111j1}
\|\dxa G_{L,1}(t,x)\|\leq Ce^{-\kappa_0t}\le Ce^{-\frac{|x|+t}{D}}.
\eq

By Lemma \ref{mvpbgf5}, we have for $|x|\leq 6t$,
\bq\label{114}
\|G_{H}(t,x)-W_{2}(t,x)\|\leq Ce^{-\frac{|x|+t}{D}}.
\eq

For $|x|>6t$, it follows from Lemma \ref{mvpbgf7} that
\bq\label{112}
\|R_2(t,x)\|\leq Ce^{-\frac{|x|+t}{D}}.
\eq

By combining \eqref{111j1}--\eqref{112}, we can obtain
$$
\|G_2(t,x)\|\leq Ce^{-\frac{|x|+t}{D}},
$$
which proves  \eqref{Thm2}. This completes the proof.

%%%%%%%%%%%%%%%%%%%%%%%%%%%%%%%%%%%%%%%%%%%%%%%%%%%%%%%%%%%%%%%%%%%%%%%%%%%%%%%%%%%%%%%%%%%%%%%%%%%%%%%%%%%%%%%%%%%%%%%%%%%%%%%%%%%%%%%%%%%%%%%%%%%
%                                                                                                                                                 %
%                                                                                                                                                 %
%            THE NONLINEAR SYSTEM                                                                                                                                     %
%                                                                                                                                                 %
%                                                                                                                                                 %
%%%%%%%%%%%%%%%%%%%%%%%%%%%%%%%%%%%%%%%%%%%%%%%%%%%%%%%%%%%%%%%%%%%%%%%%%%%%%%%%%%%%%%%%%%%%%%%%%%%%%%%%%%%%%%%%%%%%%%%%%%%%%%%%%%%%%%%%%%%%%%%%%%%

\section{The Nonlinear system}\setcounter{equation}{0}
\label{sect4}

\subsection{Energy estimate}
In this subsection, we establish the energy estimate of 1-D mVPB system. First of all, let $N$ be a positive integer, and set
\bma
E_{N,k}(f)&=\sum_{\alpha+\beta\leq N}\|w^k\dx^{\alpha}\partial_{v_1}^{\beta}f\|^2_{L^2_{x,v}}+\sum_{\alpha\leq N}\|\dx^{\alpha}\Phi\|^2_{H^1_x},\label{energy1}\\
H_{N,k}(f)&=\sum_{\alpha+\beta\leq N}\|w^k\dx^{\alpha}\partial_{v_1}^{\beta}P_1f\|^2_{L^2_{x,v}}+\sum_{\alpha\leq N-1}(\|\dxa \dx P_0f\|^2_{L^2_{x,v}}+\|\dxa \dx \Phi\|^2_{H^1_x}),\label{energy2}\\
D_{N,k}(f)&=\sum_{\alpha+\beta\leq N}\|w^{k+\frac{1}{2}}\dx^{\alpha}\partial_{v_1}^{\beta}P_1f\|^2_{L^2_{x,v}}+\sum_{\alpha\leq N-1}(\|\dxa \dx P_0f\|^2_{L^2_{x,v}}+\|\dxa \dx \Phi\|^2_{H^1_x}),\label{energy3}
\ema
where $\alpha$, $\beta$ and $k$ are nonnegative integers. For the sake of brevity, we write $E_N(f)=E_{N,0}(f)$, $H_N(f)=H_{N,0}(f)$, and $D_N(f)=D_{N,0}(f)$ for $k=0$.

By taking the inner product between $\chi_j\ (j=0,1,2,3,4)$ and \eqref{mVPB8}, we obtain the compressible Euler-Poisson type system as follows:
\bma
&\partial_tn+\dx m_1=0,\label{E0j1}\\
&\partial_tm_1+\dx n+\sqrt{\frac{2}{3}}\dx q+\dx (v_1P_1f,\chi_1)=\dx \Phi+n \dx \Phi,\label{E0j2}\\
&\partial_tm_i+\dx (v_1P_1f,\chi_i)=0,\quad i=2,3,\label{E0j3}\\
&\partial_tq+\sqrt{\frac{2}{3}}\dx m_1+\dx (v_1P_1f,\chi_4)=\sqrt{\frac{2}{3}}m_1 \dx \Phi,\label{E0j5}
\ema
where
\bq
n=(f,\chi_0),\quad m_i=(f,\chi_i)\,\ (i=1,2,3),\quad q=(f,\chi_4).\label{nmq}
\eq

Taking the microscopic projection $P_1$ to \eqref{mVPB8}, we have
\bma
\partial_tP_1f+P_1(v_1\dx P_1f)-L(P_1f)=-P_1(v_1\dx P_0f)+P_1H,\label{E0j6}
\ema
where the nonlinear term $H$ is denoted by
$$
H=\frac{1}{2}(v_1\dx \Phi)f-\dx \Phi\partial_{v_1}f+\Gamma(f,f).
$$

By \eqref{E0j6}, we can express the microscopic part $P_1f$ as
\bma
P_1f=L^{-1}[\partial_tP_1f+P_1(v_1\dx P_1f)-P_1H]+L^{-1}P_1(v_1\dx P_0f).\label{E0j7}
\ema

Substituting \eqref{E0j7} into \eqref{E0j1}--\eqref{E0j5}, we obtain the the compressible Navier-Stokes-Poisson type system as
\bma
&\partial_tn+\dx m_1=0,\label{E1}\\
&\partial_tm_1+\partial_tR_{11}+\dx n+\sqrt{\frac{2}{3}}\dx q-\dx \Phi=n\dx \Phi+\frac{4}{3}\kappa_1\partial_{xx}m_1+R_{21},\label{E2}\\
&\partial_tm_i+\partial_tR_{1i}=\kappa_1\partial_{xx}m_i+R_{2i},\quad i=2,3,\label{E3}\\
&\partial_tq+\partial_tR_{14}+\sqrt{\frac{2}{3}}\dx m_1=\kappa_2\partial_{xx}q+\sqrt{\frac{2}{3}}m_1\dx \Phi+R_{24},\label{E5}
\ema
where the viscosity coefficients $\kappa_1,\ \kappa_2>0$ and the remainder term $R_{ij}$,\ $i=1,2, j=1,2,3,4$ are defined by
\bma
&\kappa_1=-(L^{-1}P_1v_1\chi_2,v_1\chi_2),\quad \kappa_2=-(L^{-1}P_1v_1\chi_4,v_1\chi_4),\nnm\\
&R_{1j}=(v_1\dx L^{-1}P_1f,\chi_j),\quad  R_{2j}=-(v_1\dx L^{-1}[P_1(v_1\dx P_1f)-P_1H],\chi_j) .\nnm
\ema
\iffalse
\begin{lem}[\cite{18}]\label{mvpbpw1}
Let $|\beta|>0$. Then $\partial_v^{\beta}\nu(v)$ is uniformly bounded, and for any $f$,
\bma
\|\partial_v^{\beta}(Kf)\|_{L^2_v(L^2_x)}\leq C\sum_{|\beta'|=|\beta|-1}\|\partial_v^{\beta'}f\|_{L^2_v(L^2_x)}.
\ema
\end{lem}

\begin{lem}[\cite{20}]\label{mvpbpw2}
For any $\alpha\in[0,1], p\in[1,\infty]$ and $\beta\in\N^3$, we have
\bma
&\|\nu^{-\alpha}\partial_v^{\beta}\Gamma(f,g)\|_{L^2_v(L^p_x)}\nnm\\
&\leq C\sum_{\beta_1+\beta_2\leq\beta}(\|\partial_v^{\beta_1}f\|_{L^2_v(L^r_x)}\|\nu^{1-\alpha}\partial_v^{\beta_2}f\|_{L^2_v(L^s_x)}+\|\nu^{1-\alpha}\partial_v^{\beta_1}f\|_{L^2_v(L^r_x)}\|\partial_v^{\beta_2}f\|_{L^2_v(L^s_x)}),
\ema
where $\frac{1}{r}+\frac{1}{s}=\frac{1}{p}$.
\end{lem}
\fi

%%%%%%%%%%%%%%%%%%%%%%%%%%%%%%%%%%%%%%%%%%%%%%%%%%%%%%%%%%%%%%%%%%%%%%%%%%%%%%%%%%%%%%%%%%%%%%%%%%%%%%%%%%%%%%%%%%%%%%%%%%%%%%%%%%%%%%%%%%%%%%%%%%%%

%With the help of Lemma \ref{mvpbpw2},
The energy estimates of the mVPB system near a global Maxwellian have been well  established when
 $x\in \R^3$, cf.  \cite{LI-2}. Note that a key feature of 3-D in the energy estimate used  in  \cite{LI-2} comes from the Sobolev's inequality
$$\|u\|_{L^6_x}\le C\|\Tdx u\|_{L^2_x},\quad \forall u=u(x),\,\, x\in \R^3.$$

However, this Sobolev's inequality does not hold  when $x\in \R$. In fact, the 1-D Sobolev's inequality takes the form
$$\|u\|_{L^\infty_x}\le C\|u\|^{1/2}_{L^2_x}\|\dx u\|^{1/2}_{L^2_x},\quad \forall u=u(x),\,\, x\in \R.$$

 By using the above Sobolev's inequality  and a similar  argument as in \cite{LI-2}, we can obtain
 the following energy estimates in 1-D setting.% which is different to the 3-D case.
%First, we have the following energy estimates of the macroscopic parts of $f$.

\begin{lem}[Macroscopic dissipation]\label{mvpbpw3}
Let $N\geq2$ and $(n,m,q)$ be a strong solution to \eqref{E0j1}--\eqref{E0j5}. Then, there are constants $p_0>0$ and $C>0$ so that %it hold for $t>0$ that
\bma
&\quad p_0\Dt\sum_{\alpha\leq N-1}\(\|\partial^{\alpha}_x(n,m,q)\|^2_{L^2_x}+\|\partial^{\alpha}_x\Phi\|^2_{L^2_x}+\|\partial^{\alpha}_x\dx\Phi\|^2_{L^2_x}\)\nnm\\
&\quad+2p_0\Dt\sum_{\alpha\leq N-1}\bigg(\sum^3_{i=1}\int_{\R}\partial^{\alpha}_xR_{1i}\partial^{\alpha}_xm_idx +\int_{\R}\partial^{\alpha}_xR_{14}\partial^{\alpha}_xqdx\bigg)\nnm\\
&\quad+4\Dt\sum_{\alpha\leq N-1}\int_{\R}\partial^{\alpha}_xm_1\partial^{\alpha}_x\dx ndx+\sum_{\alpha\leq N-1}\(\|\partial^{\alpha}_x\dx (n,m,q)\|^2_{L^2_x}+\|\partial^{\alpha}_x\dx \Phi\|^2_{H^1_x}\)\nnm\\
&\leq C\sum_{\alpha\leq N-1}\|\partial^{\alpha}_{x}\dx P_1f\|^2_{L^2_{x,v}}+C\sqrt{E_N(f)}D_N(f)\nnm\\
&\quad+C\sqrt{E_N(f)D_N(f)}\|P_0f\|_{L^2_v(L^{\infty}_x)}. \label{EMD}
\ema
%with $0\leq k\leq N-1$.
\end{lem}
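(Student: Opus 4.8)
\noindent\emph{Proof proposal.} The plan is to carry out a weighted macroscopic energy estimate on the reformulated compressible Navier--Stokes--Poisson type system \eqref{E1}--\eqref{E5} (equivalently \eqref{E0j1}--\eqref{E0j5} together with \eqref{E0j7}), exploiting its parabolic structure in $q$ and in $m_i$ ($i=2,3$), the hyperbolic $n$--$m_1$ coupling, and the elliptic Poisson relation \eqref{mVPB9}, namely $(I-\partial_{xx})\Phi=-n+(e^{-\Phi}+\Phi-1)$ with $e^{-\Phi}+\Phi-1=O(|\Phi|^2)$. First I would fix $\alpha\le N-1$, apply $\partial_x^\alpha$ to \eqref{E1}--\eqref{E5}, test the resulting equations against $(\partial_x^\alpha n,\partial_x^\alpha m_1,\partial_x^\alpha m_2,\partial_x^\alpha m_3,\partial_x^\alpha q)$, and sum over $\alpha$. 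The first-order flux pairs $\partial_x^\alpha n\leftrightarrow\partial_x^\alpha m_1$ and $\partial_x^\alpha q\leftrightarrow\partial_x^\alpha m_1$ cancel by antisymmetry; the viscous terms yield $\tfrac43\kappa_1\|\partial_x^{\alpha+1}m_1\|^2+\kappa_1\sum_{i=2,3}\|\partial_x^{\alpha+1}m_i\|^2+\kappa_2\|\partial_x^{\alpha+1}q\|^2$ on the dissipative side; and the electric-force term $-\int\partial_x^{\alpha+1}\Phi\,\partial_x^\alpha m_1$ is recast, using $\partial_t n=-\partial_x m_1$ and the Poisson equation, as $\tfrac12\Dt\int(|\partial_x^\alpha\Phi|^2+|\partial_x^\alpha\partial_x\Phi|^2)\,dx$ plus a cubic remainder, which produces the $\|\partial_x^\alpha\Phi\|^2_{L^2_x}+\|\partial_x^\alpha\partial_x\Phi\|^2_{L^2_x}$ inside $\Dt$ in \eqref{EMD}. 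The time derivatives landing on the remainders $R_{1i}$, $R_{14}$ are moved inside $\Dt$, yielding precisely the correction terms $\Dt\sum_{i=1}^3\int\partial_x^\alpha R_{1i}\partial_x^\alpha m_i$ and $\Dt\int\partial_x^\alpha R_{14}\partial_x^\alpha q$ of \eqref{EMD}; since $R_{1j}=(v_1\partial_xL^{-1}P_1f,\chi_j)$ is first order in $\partial_xP_1f$, these are bounded by $CE_N(f)$ and are harmless compared with the positive-definite part of the functional.

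This step supplies the dissipation of $m$ and $q$ up to order $N$ but not of $n$. To recover $\|\partial_x^{\alpha+1}n\|^2$ I would add the density--momentum cross estimate: test $\partial_x^\alpha$ of the conservative momentum equation \eqref{E0j2} against $\partial_x^{\alpha+1}n$, and integrate by parts using $\partial_t n=-\partial_x m_1$ to write $\int\partial_x^\alpha\partial_t m_1\,\partial_x^{\alpha+1}n\,dx=\Dt\int\partial_x^\alpha m_1\,\partial_x^{\alpha+1}n\,dx+\|\partial_x^{\alpha+1}m_1\|^2$, so that $\|\partial_x^{\alpha+1}n\|^2$ appears with the correct sign at the price of the evolving quantity $\Dt\int\partial_x^\alpha m_1\partial_x^{\alpha+1}n\,dx$ (the term $4\Dt\sum\int\partial_x^\alpha m_1\partial_x^\alpha\partial_x n$ in \eqref{EMD}) and an auxiliary term $C\|\partial_x^{\alpha+1}m_1\|^2$; here using the conservative form \eqref{E0j2} rather than \eqref{E2} avoids an $(N+1)$-order occurrence of $R_{11}$. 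In this same cross estimate the electric-force term gives $-\int\partial_x^{\alpha+1}\Phi\,\partial_x^{\alpha+1}n\,dx=\|\partial_x^{\alpha+1}\Phi\|^2+\|\partial_x^{\alpha+2}\Phi\|^2+(\text{cubic})$ after substituting $\partial_x^{\alpha+1}n=-(I-\partial_{xx})\partial_x^{\alpha+1}\Phi+\partial_x^{\alpha+1}(e^{-\Phi}+\Phi-1)$, so that the full $\|\partial_x^\alpha\partial_x\Phi\|^2_{H^1_x}$ dissipation is produced directly. Finally I would form the combination: $p_0$ times (the basic energy together with the $R_{1i},R_{14}$-corrections) plus $4$ times the cross term, with $p_0$ chosen large; the auxiliary $\|\partial_x^{\alpha+1}m_1\|^2$ produced by the cross term is then absorbed into $\tfrac43 p_0\kappa_1\|\partial_x^{\alpha+1}m_1\|^2$, and $\sum_{\alpha\le N-1}(\|\partial_x^\alpha\partial_x(n,m,q)\|^2+\|\partial_x^\alpha\partial_x\Phi\|^2_{H^1_x})$ emerges with a fixed positive coefficient.

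It remains to absorb the nonlinear terms: $n\partial_x\Phi$ and $\sqrt{2/3}\,m_1\partial_x\Phi$ from \eqref{E2}, \eqref{E5}, the quadratic Poisson term $e^{-\Phi}+\Phi-1$ and its derivatives, and the terms coming from $R_{2j}=-(v_1\partial_xL^{-1}[P_1(v_1\partial_xP_1f)-P_1H],\chi_j)$ with $H=\tfrac12(v_1\partial_x\Phi)f-\partial_x\Phi\partial_{v_1}f+\Gamma(f,f)$. Here the one-dimensional Sobolev inequality $\|u\|_{L^\infty_x}\le C\|u\|_{L^2_x}^{1/2}\|\partial_xu\|_{L^2_x}^{1/2}$ replaces the 3-D embedding $\|u\|_{L^6_x}\le C\|\partial_xu\|_{L^2_x}$. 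In each product I would let the $L^\infty_x$-estimated factor carry no $x$-derivative; the linear second-order piece of $R_{2j}$ is handled by integrating one derivative onto the test function so that only $\|\partial_x^{\alpha+1}P_1f\|^2$, i.e. the first term $C\sum_{\alpha\le N-1}\|\partial_x^\alpha\partial_xP_1f\|^2$ on the right of \eqref{EMD}, appears; products in which a microscopic factor or a differentiated macroscopic factor is present are bounded by $C\sqrt{E_N(f)}\,D_N(f)$; and the genuinely quadratic macroscopic products — those coming from $P_0f\cdot P_0f$, $P_0f\cdot\partial_{v_1}f$, or $n\partial_x\Phi$, whose undifferentiated factor lies in $E_N(f)$ but not in $D_N(f)$ — are placed into the shape $C\sqrt{E_N(f)D_N(f)}\,\|P_0f\|_{L^2_v(L^\infty_x)}$ using $\|n\|_{L^\infty_x}\le C\|P_0f\|_{L^2_v(L^\infty_x)}$ and analogous bounds. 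Collecting all contributions yields \eqref{EMD}. I expect this last step to be the main obstacle: in one space dimension there is no self-contained dissipation bound for the undifferentiated macroscopic density, which forces the non-standard term $\|P_0f\|_{L^2_v(L^\infty_x)}$ to persist on the right-hand side (it is closed later via the pointwise Green's function estimates of Theorem \ref{mvpbth1}), and one must verify that every nonlinear term can be fitted into exactly one of the three admissible shapes above without ever requiring a bound on an undissipated norm or on an $(N+1)$-st order derivative.
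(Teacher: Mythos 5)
Your proposal is correct and follows essentially the same route as the paper: basic energy estimate on the Navier--Stokes--Poisson type equations \eqref{E2}--\eqref{E5} with the $R_{1j}$-correction terms absorbed into the time-derivative, the electric-force term converted to $\tfrac12\Dt(\|\partial_x^\alpha\Phi\|^2+\|\partial_x^{\alpha+1}\Phi\|^2)$ plus cubic remainders via the Poisson relation, a cross estimate pairing $\partial_x^{\alpha+1}n$ with the conservative momentum equation \eqref{E0j2} to recover the $n$- and $\Phi$-dissipation, and the linear combination $p_0\cdot(\text{energy})+4\cdot(\text{cross})$ with the nonlinear contributions sorted into the three admissible shapes $\|\partial_x^{\alpha+1}P_1f\|^2$, $\sqrt{E_N}D_N$, $\sqrt{E_ND_N}\|P_0f\|_{L^2_v(L^\infty_x)}$, exactly as in \eqref{E15}--\eqref{E26}. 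One sign slip in the exposition: the cross identity should read $\int_{\R}\partial_x^\alpha\partial_t m_1\,\partial_x^{\alpha+1}n\,dx=\Dt\int_{\R}\partial_x^\alpha m_1\,\partial_x^{\alpha+1}n\,dx-\|\partial_x^{\alpha+1}m_1\|^2_{L^2_x}$ (minus, not plus), which is what makes the auxiliary $\|\partial_x^{\alpha+1}m_1\|^2$ appear on the right as a cost --- this is consistent with your subsequent verbal description and with \eqref{E26}, so the conclusion is unaffected.
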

\begin{proof}
Let $\alpha$ is a non-negative integer and satisfies $\alpha\leq N-1$. Firstly,  taking the inner product between $\dxa \eqref{E2}$ and $\dxa m_1$,  we have
\bma
&\frac{1}{2}\Dt\(\|\dxa(n, m_1 )\|^2_{L^2_x} +\|\dxa \Phi\|^2_{L^2_x}+\|\dxa \dx \Phi\|^2_{L^2_x}\)+\Dt\int_{\R}\dxa R_{11}\dxa m_1dx \nnm\\
&\quad+ \sqrt{\frac{2}{3}}\int_{\R}\dxa \dx q \dxa m_1dx-\frac{4}{3}\kappa_1\|\dxa \dx m_1\|^2_{L^2_x}\nnm\\
&=\int_{\R}\dxa (n\dx \Phi)\dxa m_1dx+\int_{\R}\dxa R_{21} \dxa m_1dx+\int_{\R}\dxa R_{11}\partial_t\dxa m_1dx\nnm\\
&\quad-\int_{\R}\dxa \Phi\dxa \[(e^{-\Phi}-1)\partial_t\Phi\]dx\nnm\\
&=:I_1(\alpha)+I_2(\alpha)+I_3(\alpha)+I_4(\alpha),\label{E7}
\ema
where we have used
\bma
\int_{\R}\dxa \dx n \dxa m_1dx&=-\int_{\R}\dxa n \dxa\dx m_1dx=\frac{1}{2}\Dt\|\dxa n\|^2_{L^2_x},\label{E14j2}\\
-\int_{\R}\dxa \dx \Phi\dxa m_1 dx&=\frac{1}{2}\Dt \|\dxa (\Phi,\dx \Phi)\|^2_{L^2_x} +\int_{\R}\dxa \Phi\dxa \[(e^{-\Phi}-1)\partial_t\Phi\]dx.\nnm
\ema

We estimate $I_j(\alpha)$, $j=1,2,3,4$ as follows. For $I_1(\alpha)$, we have by Sobolev inequality that
\bma
I_1(0)&\leq \|n\|_{L^{\infty}_x}\|\dx \Phi\|_{L^2_x}\|m_1\|_{L^2_x}\leq C\sqrt{E_N(f)D_N(f)}\|n\|_{L^{\infty}_x}, \\
I_1(\alpha)&\leq C\sum_{\alpha'\leq\alpha}\|\dx^{\alpha-\alpha'}n\|_{L^2_x}\|\dx^{\alpha'}\dx \Phi\|_{L^{\infty}_x}\|\dxa m_1\|_{L^2_x}\nnm\\
&\leq C\sqrt{E_N(f)}D_N(f),\quad  1\le \alpha\le N-1.\label{E9}
\ema

For $I_2(\alpha)$, it holds that
\bma
I_2(\alpha)&\leq  C(\|\dxa \dx P_1f\|_{L^2_{x,v}}+\|\dxa (\dx \Phi f)\|_{L^2_{x,v}}+\|\nu^{-\frac{1}{2}}\dxa \Gamma(f,f)\|_{L^2_{x,v}})\| \dxa \dx m_1\|_{L^2_x}\nnm\\
%&\leq\frac{\kappa_1}{2}\|\dxa \dx m_1\|^2_{L^2_x}+C\sum_{\alpha'\leq\alpha}\|\dx^{\alpha'}\dx \Phi\|_{L^2_x}\|\dx^{\alpha-\alpha'}f\|_{L^2_v(L^{\infty}_{x})}\|\dxa \dx m_1\|_{L^2_x}\nnm\\
%&\quad+C\|\nu^{-\frac{1}{2}}\dxa \Gamma(f,f)\|_{L^2_{x,v}}\| \dxa \dx m_1\|_{L^2_x}+C\|\dxa \dx P_1f\|^2_{L^2_{x,v}}\nnm\\
&\leq \frac{\kappa_1}{2}\|\dxa \dx m_1\|^2_{L^2_x}+C\|\dxa \dx P_1f\|^2_{L^2_{x,v}}+C\sqrt{E_N(f)}D_N(f)\nnm\\
&\quad+C\sqrt{E_N(f)D_N(f)}\|P_0f\|_{L^2_v(L^{\infty}_x)},\label{E10}
\ema
where we have used
\bma
&\quad \|\nu^{-\frac{1}{2}}\Gamma(f,f)\|_{L^2_{x,v}}\nnm\\
 &\leq\|\nu^{-\frac{1}{2}}\Gamma(P_0f,P_0f)\|_{L^2_{x,v}}+\|\nu^{-\frac{1}{2}}\Gamma(P_1f,P_1f)\|_{L^2_{x,v}}
 +\|\nu^{-\frac{1}{2}}\Gamma(P_0f,P_1f)\|_{L^2_{x,v}}\nnm\\
%&\qquad\qquad\qquad\quad~\leq C\|\nu^{\frac{1}{2}}P_0f\|_{L^2_v(L^{\infty}_x)}\|P_0f\|_{L^2_{x,v}}+C\|\nu^{\frac{1}{2}}P_1f\|_{L^2_v(L^{\infty}_x)}\|P_1f\|_{L^2_{x,v}}\nnm\\
%&\qquad\qquad\qquad\qquad~+C\|P_0f\|_{L^2_v(L^{\infty}_x)}\|\nu^{\frac{1}{2}}P_1f\|_{L^2_{x,v}}+C\|\nu^{\frac{1}{2}}P_0f\|_{L^2_v(L^{\infty}_x)}\|P_1f\|_{L^2_{x,v}}\nnm\\
&\leq C\sqrt{E_N(f)D_N(f)}+C\sqrt{E_N(f)}\|P_0f\|_{L^2_v(L^{\infty}_x)}, \label{GA1}
\ema
and
\bma
 &\quad\|\nu^{-\frac{1}{2}}\dxa \Gamma(f,f)\|_{L^2_{x,v}}\nnm\\
 &\leq C\sum_{1\le \alpha'\leq\alpha-1}\Big(\|\dx^{\alpha'}f\|_{L^2_v(L^{\infty}_x)}\|\nu^{\frac{1}{2}}\dx^{\alpha-\alpha'}f\|_{L^2_{x,v}}
 +\|\nu^{\frac{1}{2}}\dx^{\alpha'}f\|_{L^2_v(L^{\infty}_x)}\|\dx^{\alpha-\alpha'}f\|_{L^2_{x,v}}\Big)\nnm\\
&\quad+C\Big(\|\dxa f\|_{L^2_{x,v}}\|\nu^{\frac{1}{2}}f\|_{L^2_v(L^{\infty}_x)}+\|\nu^{\frac{1}{2}}\dxa f\|_{L^2_{x,v}}\|f\|_{L^2_v(L^{\infty}_x)}\Big)\nnm\\
&\leq C\sqrt{E_{N}(f)D_N(f)},\quad 1\leq\alpha\leq N.\label{GA2}
\ema

For $I_3(\alpha)$, we have by Cauchy inequality and \eqref{E0j2} that
\bma
I_3(\alpha)&=\int_{\R}\dxa R_{11}\dxa \bigg[-\dx n-\sqrt{\frac{2}{3}}\dx q-\dx (v_1P_1f,\chi_1)+\dx \Phi+n\dx \Phi\bigg]dx\nnm\\
&\leq \frac{C}{\eps}\|\dxa \dx P_1f\|^2_{L^2_{x,v}}+\eps\(\|\dxa \dx n\|^2_{L^2_x}+\|\dxa \dx q\|^2_{L^2_x}+\|\dxa \dx \Phi\|^2_{L^2_x}\)\nnm\\
&\quad+C\sqrt{E_N(f)}D_N(f),\label{E11}
\ema
where $0<\eps<1$ is a constant determined later.

By \eqref{mVPB9} and \eqref{E0j1}, we obtain
\be
\partial_t\Phi-\partial_{xx}\partial_t\Phi =\dx m_1-(e^{-\Phi}-1)\partial_t\Phi.\label{E12}
\ee

Then, when $E_N(f)$ is small enough, we take the inner product of $\dxa \partial_t\Phi$ and $\dxa \eqref{E12}$ to get
\be
\sum_{\alpha\leq N-1}(\|\dxa \partial_t\Phi\|^2_{L^2_x}+\|\dxa \partial_t\dx \Phi\|^2_{L^2_x})\leq C\sum_{\alpha\leq N-1}\|\dxa \dx m_1\|^2_{L^2_x}.\label{E13}
\ee

Noting that
$$
e^{-\Phi}-1=n-\partial_{xx}\Phi,
$$
we obtain by \eqref{E13} that
\bma
I_4(0)%&\leq \|\Phi\|_{L^2_x}\|\dt \Phi\|_{L^2_x}\|e^{-\Phi}-1\|_{L^{\infty}_x}\nnm\\
&\leq C\|\Phi\|_{L^2_x}\|\partial_t\Phi\|_{L^2_x}\|n\|_{L^{\infty}_x}+C\|\Phi\|_{L^2_x}\|\partial_t\Phi\|_{L^2_x}\|\partial_{xx}\Phi\|_{L^{\infty}_x}\nnm\\
&\leq C\sqrt{E_N(f)}D_N(f)+C\sqrt{E_N(f)D_N(f)}\|n\|_{L^{\infty}_x},  \label{E14}
\\
I_4(\alpha)&\leq \sum_{\alpha'\leq\alpha}\|\dxa \Phi\|_{L^2_x}\|\dx^{\alpha'}(e^{-\Phi}-1)\|_{L^{\infty}_x}\|\dx^{\alpha-\alpha'}\partial_t\Phi\|_{L^2_x}\nnm\\
%&\leq C\sum_{1\leq\alpha'\leq \frac{N}{2}}\|\partial^{\alpha}_{x}\Phi\|_{L^2_x}\|\partial^{\alpha-\alpha'}_{x}\dx m_1\|_{L^{\infty}_x}\|\partial^{\alpha'}_{x}\Phi\|_{L^2_x}\nnm\\
%&\quad+C\sum_{\frac{N}{2}\leq\alpha'\leq N-1}\|\partial^{\alpha}_{x}\Phi\|_{L^2_x}\|\partial^{\alpha-\alpha'}_{x}\dx m_1\|_{L^{2}_x}\|\partial^{\alpha'}_{x}\Phi\|_{L^{\infty}_x}\nnm\\
&\leq C\sqrt{E_N(f)}D_N(f), \quad 1\le \alpha\le N-1.\label{E14j1}
\ema

Therefore, it follows from \eqref{E7}--\eqref{E10},and \eqref{E11}--\eqref{E14j1} that for $0<\eps\ll 1$,
\bma
&\quad\frac{1}{2}\Dt\(\|\dxa(n, m_1)\|^2_{L^2_x} +\|\dxa \Phi\|^2_{L^2_x}+\|\dxa \dx \Phi\|^2_{L^2_x}+2\int_{\R}\dxa R_{11}\dxa m_1dx\)\nnm\\
&\quad+\sqrt{\frac{2}{3}}\int_{\R}\dxa \dx q\dxa  m_1dx +\frac{5}{6}\kappa_1\|\dxa \dx m_1\|^2_{L^2_x}\nnm\\
&\leq C\|\dxa \dx P_1f\|^2_{L^2_{x,v}}+\eps\(\|\dxa \dx n\|^2_{L^2_x}+\|\dxa \dx q\|^2_{L^2_x}+\|\dxa \dx \Phi\|^2_{L^2_x}\)\nnm\\
&\quad+C\sqrt{E_N(f)}D_N(f)+C\sqrt{E_N(f)D_N(f)}\|P_0f\|_{L^2_v(L^{\infty}_x)}.\label{E15}
\ema

 By taking the inner product between $\dxa m_i$ $(i=2,3)$ and $\dxa \eqref{E3}$, we can obtain by a similar argument as proving \eqref{E15} that
\bma
&\frac{1}{2}\Dt\(\|\dxa m_i\|^2_{L^2_x}+2\int_{\R}\dxa R_{1i}\dxa m_idx\)+\frac{\kappa_1}{2}\|\dxa \dx m_i\|^2_{L^2_x}\nnm\\
&\leq C\|\dxa \dx P_1f\|^2_{L^2_{x,v}}+C\sqrt{E_N(f)}D_N(f)+C\sqrt{E_N(f)D_N(f)}\|P_0f\|_{L^2_v(L^{\infty}_x)}.\label{E19}
\ema

Similarly, we have
\bma
&\quad\frac{1}{2}\Dt\(\|\dxa q\|^2_{L^2_x}+2\int_{\R}\dxa R_{14}\dxa qdx\)+\sqrt{\frac{2}{3}}\int_{\R}\dxa \dx m_1 \dxa qdx+\frac{\kappa_2}{2}\|\dxa \dx q\|^2_{L^2_x}\nnm\\
&\leq C\|\dxa \dx P_1f\|^2_{L^2_{x,v}}+\eps\|\dxa \dx m_1\|^2_{L^2_x}+C\sqrt{E_N(f)}D_N(f)\nnm\\
&\quad+C\sqrt{E_N(f)D_N(f)}\|P_0f\|_{L^2_v(L^{\infty}_x)}.\label{E25}
\ema

Finally, taking the inner product between $\dxa \dx n$ and $\dxa \eqref{E0j2}$ with $\alpha\leq N-1$ and applying a similar argument as proving \eqref{E15}, we have
\bma
&\quad\Dt\int_{\R}\dxa m_1\dxa \dx ndx+\frac{1}{2}\|\dxa \dx n\|^2_{L^2_x}+\|\dxa \dx \Phi\|^2_{L^2_{x}}+\|\dxa \partial_{xx}\Phi\|^2_{L^2_x}\nnm\\
&\leq \|\dxa \dx m_1\|^2_{L^2_x}+C(\|\dxa \dx q\|^2_{L^2_x}+\|\dxa \dx P_1f\|^2_{L^2_{x,v}})+C\sqrt{E_N(f)}D_N(f).\label{E26}
\ema

Making the summation $p_0\sum_{\alpha\leq N-1}[\eqref{E15}+ \eqref{E19}+\eqref{E25}]+4\sum_{\alpha\leq N-1}\eqref{E26}$ with the constant $p_0>0$ large enough and $\eps>0$ small enough, we obtain \eqref{EMD}. The proof of the lemma is completed.
\end{proof}

%%%%%%%%%%%%%%%%%%%%%%%%%%%%%%%%%%%%%%%%%%%%%%%%%%%%%%%%%%%%%%%%%%%%%%%%%%%%%%%%%%%%%%%%%%%%%%%%%%%%%%%%%%%%%%%%%%%%%%%%%%%%%%%%%%%%%%%%%%%%%%%%%%%%%

In the following, we can estimate the microscopic terms to enclose the energy inequality of the solution $f$ to mVPB system \eqref{mVPB8}--\eqref{mVPB9}.

\begin{lem}\label{mvpbpw4}
Let $N\geq2$ and $(f,\Phi)$ be a strong solution to 1-D mVPB system \eqref{mVPB8}--\eqref{mVPB9}. Then, there are constants $p_k>0,\ 1\leq k\leq N$ and $C>0 $  such that
\bma
&\frac{1}{2}\Dt\sum_{\alpha\leq N}(\|\partial^{\alpha}_{x}f\|^2_{L^2_{x,v}}+\|\partial^{\alpha}_{x}\Phi\|^2_{L^2_x}+\|\partial^{\alpha}_{x}\dx\Phi\|^2_{L^2_x})
+\mu\sum_{\alpha\leq N}\|w^{\frac{1}{2}}\partial^{\alpha}_xP_1f\|^2_{L^2_{x,v}}\nnm\\
&\leq C\sqrt{E_N(f)}D_N(f)+C\sqrt{E_N(f)D_N(f)}\|P_0f\|_{L^2_v(L^{\infty}_x)},\label{MI1}\\
%&\Dt\|P_1f\|^2_{L^2_{x,v}}+\mu\|w^{\frac{1}{2}}P_1f\|^2_{L^2_{x,v}}\nnm\\
%&\leq C\|\dx P_0f\|^2_{L^2_{x,v}}+C\sqrt{E_N(f)}D_N(f)+C\sqrt{E_N(f)D_N(f)}\|P_0f\|_{L^2_v(L^{\infty}_x)},\label{MI2}\\
&\Dt\sum_{1\leq k\leq N}p_k\sum_{\beta=k, \alpha+\beta\leq N}\|\dxa \partial^{\beta}_{v_1}P_1f\|^2_{L^2_{x,v}}\nnm\\
&\quad+\mu\sum_{1\leq k\leq N}p_k\sum_{\beta=k,\alpha+\beta\leq N}\|w^{\frac{1}{2}}\dxa \partial^{\beta}_{v_1}P_1f\|^2_{L^2_{x,v}}\nnm\\
&\leq C\sum_{\alpha\leq N-1}\|\dxa \dx f\|^2_{L^2_{x,v}}+C\sqrt{E_N(f)}D_N(f)+C\sqrt{E_N(f)D_N(f)}\|P_0f\|_{L^2_v(L^{\infty}_x)}.\label{MI3}
\ema
\end{lem}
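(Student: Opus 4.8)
The plan is to run two coupled weighted energy estimates on the microscopic part $P_1f$, following the scheme of \cite{LI-2} but replacing every use of the $3$-D Sobolev embedding $\dot H^1_x\hookrightarrow L^6_x$ by the $1$-D interpolation $\|u\|_{L^\infty_x}\le C\|u\|^{1/2}_{L^2_x}\|\dx u\|^{1/2}_{L^2_x}$; the price of this substitution is precisely the factor $\|P_0f\|_{L^2_v(L^\infty_x)}$ appearing on the right-hand sides of \eqref{MI1}--\eqref{MI3}, the one lowest-order term that cannot be absorbed into $D_N(f)$ and must instead be carried along for the later pointwise analysis. For \eqref{MI1}, for each $\alpha\le N$ I would apply $\dxa$ to \eqref{mVPB8}, take the $L^2_{x,v}$ inner product with $\dxa f$, and sum over $\alpha\le N$. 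The streaming term $v_1\dx f$ integrates away; splitting $L=K-\nu(v)$ as in \eqref{LKF} and using coercivity \eqref{L_3} together with $\nu(v)\ge\nu_0(1+|v|)$ from \eqref{nuv}, the linear collision term produces a dissipation bounded below by $\mu\|w^{1/2}\dxa P_1f\|^2_{L^2_{x,v}}$ once the compact part $K$ is absorbed by an interpolation in $v$. The coupling term $-v_1\sqrt M\dx\Phi$ pairs only with $\dxa P^2_0f$; using \eqref{mVPB9} and the continuity relation $\dt n+\dx m_1=0$ one rewrites $\int_{\R}\dxa\dx\Phi\,\dxa m_1\,dx$ as a time derivative of $\|\dxa(\Phi,\dx\Phi)\|^2_{L^2_x}$ plus a term generated by the nonlinearity $e^{-\Phi}+\Phi-1$ already of the admissible form, exactly as in Lemma \ref{mvpbpw3}. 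The genuinely nonlinear terms $\frac12(v_1\dx\Phi)f$, $-\dx\Phi\partial_{v_1}f$, $\Gamma(f,f)$ are treated by the Leibniz rule, estimating at each order the top-order factor in $L^2_{x,v}$ and the remaining factor in $L^2_v(L^\infty_x)$ via the $1$-D interpolation and the standard bound on $\Gamma$ (cf. \cite{LI-2}).

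For \eqref{MI3} I would argue by induction on $k=\beta$, from $k=1$ up to $k=N$. Fixing $\alpha,\beta$ with $\beta=k$ and $\alpha+\beta\le N$, apply $\dxa\partial^{\beta}_{v_1}$ to the micro equation \eqref{E0j6} and pair with $\dxa\partial^{\beta}_{v_1}P_1f$ in $L^2_{x,v}$. Three structural points drive the argument. First, when $\partial^{\beta}_{v_1}$ hits $P_1(v_1\dx P_1f)$ it produces, besides the skew term $v_1\dx\dxa\partial^{\beta}_{v_1}P_1f$ which integrates to zero, a commutator term $\propto\dx\dxa\partial^{\beta-1}_{v_1}P_1f$ carrying one fewer $v_1$-derivative but one more $x$-derivative; since $\alpha+(\beta-1)\le N-1$, this is thrown onto the right-hand side as part of $\sum_{\alpha\le N-1}\|\dxa\dx f\|^2_{L^2_{x,v}}$. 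Second, writing $L=K-\nu(v)$, the term $\partial^{\beta}_{v_1}(\nu f)$ keeps the coercive piece $-\nu(v)\dxa\partial^{\beta}_{v_1}P_1f$ up to lower-order terms (as $\partial^{\beta}_{v_1}\nu$ is bounded for $\beta\ge1$), giving the dissipation $\mu\|w^{1/2}\dxa\partial^{\beta}_{v_1}P_1f\|^2_{L^2_{x,v}}$, while $\partial^{\beta}_{v_1}K$ gains one $v_1$-derivative and is absorbed into the dissipation at level $\beta-1$ — this is where the constants $p_k$, taken rapidly decreasing, make the induction close. Third, the macroscopic source $-P_1(v_1\dx P_0f)$ has polynomial-times-Maxwellian structure in $v$, so $\partial^{\beta}_{v_1}$ is harmless and it is controlled by $\|\dxa\dx P_0f\|_{L^2_{x,v}}$, while $\partial^{\beta}_{v_1}(P_1H)$ is bounded by the $\partial^\beta_v\Gamma$ estimate and the explicit $v$-dependence of $\frac12(v_1\dx\Phi)f-\dx\Phi\partial_{v_1}f$, split again into a top-order $L^2_{x,v}$ factor and an $L^2_v(L^\infty_x)$ factor. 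Summing $\Dt\sum_{1\le k\le N}p_k\sum_{\beta=k,\ \alpha+\beta\le N}\|\dxa\partial^{\beta}_{v_1}P_1f\|^2_{L^2_{x,v}}$ with $p_1\gg p_2\gg\cdots\gg p_N>0$ then gives \eqref{MI3}.

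The main obstacle is the bookkeeping around the commutator term in the first point: at level $\beta$ there is no spare fraction of $\mu$ to absorb $\dx\dxa\partial^{\beta-1}_{v_1}P_1f$, so it must be moved to the right-hand side, and one must verify that the resulting $\sum_{\alpha\le N-1}\|\dxa\dx f\|^2_{L^2_{x,v}}$ is genuinely dominated by the macroscopic dissipation $D_N(f)$ supplied by Lemma \ref{mvpbpw3} when the two families of estimates are later combined, i.e. that no derivative count exceeds what \eqref{energy1}--\eqref{energy3} permit. A secondary difficulty is calibrating $p_0$ from Lemma \ref{mvpbpw3} together with $p_1,\dots,p_N$ so that all the $\partial^{\beta}_{v_1}K$ cross terms and the $v_1$-coupling terms are simultaneously absorbed; this is a finite, explicitly orderable chain of smallness conditions, resolved exactly as in \cite{LI-2}.
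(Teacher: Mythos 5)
Your proposal matches the paper's proof in essentially every structural point: the $L^2_{x,v}$ pairing of $\dxa$\eqref{mVPB8} against $\dxa f$ for \eqref{MI1} with the $1$-D interpolation $\|u\|_{L^\infty_x}\le C\|u\|^{1/2}_{L^2_x}\|\dx u\|^{1/2}_{L^2_x}$ producing the $\|P_0f\|_{L^2_v(L^\infty_x)}$ term, the rewriting of $-\int\dxa\dx\Phi\,\dxa m_1$ as a time derivative via $\dt n+\dx m_1=0$ and \eqref{mVPB9}, and the induction in $\beta=k$ with $p_N=1$ and $\mu p_k\gtrsim\sum_{j>k}p_jC_j$ for \eqref{MI3} (the paper uses \eqref{IE7}, a trivial regrouping of \eqref{E0j6}). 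One small bookkeeping slip: the commutator $\beta\dx\dxa\partial^{\beta-1}_{v_1}P_1f$ lands in $\sum_{\alpha\le N-1}\|\dxa\dx f\|^2_{L^2_{x,v}}$ only when $\beta=1$; for $\beta\ge2$ it still carries $v$-derivatives and must be absorbed by the level-$(\beta-1)$ dissipation $\mu p_{\beta-1}\|w^{1/2}\dxa\partial^{\beta-1}_{v_1}P_1f\|^2$ — i.e. by the same $p_k$ chain you invoke for the $\partial^\beta_{v_1}K$ terms, not by the macroscopic $\|\dxa\dx f\|$ term — which is exactly how the paper's \eqref{IE26}--\eqref{IE27} are organized.
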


\begin{proof}
Taking the inner product between $\partial^{\alpha}_xf$ and $\dxa \eqref{mVPB8}$, we have
\bma
&\frac{1}{2}\Dt(\|\partial^\alpha_xf\|^2_{L^2_{x,v}}+\|\dxa\Phi\|^2_{L^2_x}+\|\dxa \dx  \Phi\|^2_{L^2_x})-\int_{\R\times\R^3}L(\partial^{\alpha}_xf)\partial^{\alpha}_xfdxdv\nnm\\
&=\frac12\int_{\R\times\R^3}v_{1}\partial^{\alpha}_x(\dx \Phi f)\partial^{\alpha}_xfdxdv-\int_{\R\times\R^3}\partial^{\alpha}_x(\dx  \Phi\partial_{v_{1}} f)\partial^{\alpha}_xfdxdv\nnm\\
&\quad+\int_{\R\times\R^3}\partial^{\alpha}_x\Gamma(f,f)\partial^{\alpha}_xP_1fdxdv+\int_{\R\times\R^3}\dxa \Phi\dxa [\partial_t\Phi(1-e^{-\Phi})] dxdv\nnm\\
&=:J_1(\alpha)+J_2(\alpha)+J_3(\alpha)+J_4(\alpha).\label{IE15}
\ema

We estimate $J_j(\alpha)$, $j=1,2,3,4$ as follows. For $J_1(\alpha)$, we have by Sobolev inequality that
\bma
J_1(0)&\leq C\|\dx \Phi\|_{L^2_x}\|\nu^{\frac{1}{2}}f\|_{L^2_v(L^{\infty}_{x})}\|\nu^{\frac{1}{2}}f\|_{L^2_{x,v}}\nnm\\
&\leq C\|\dx \Phi\|_{L^2_x}\|\nu^{\frac{1}{2}}f\|_{L^2_{x,v}}(\|\nu^{\frac{1}{2}}P_0f\|_{L^2_v(L^{\infty}_{x})}+\|\nu^{\frac{1}{2}}P_1f\|_{L^2_v(L^{\infty}_{x})})\nnm\\
&\leq C\sqrt{E_N(f)}D_N(f)+C\sqrt{E_N(f)D_N(f)}\|P_0f\|_{L^2_v(L^{\infty}_x)},\label{IE16j}\\
J_1(\alpha)&\leq C\sum_{\alpha'\leq \alpha-1}\|\dx^{\alpha'}\dx \Phi\|_{L^{\infty}_x}\|\nu^{\frac{1}{2}}\dx^{\alpha-\alpha'}f\|_{L^2_{x,v}}\|\nu^{\frac{1}{2}}\dxa f\|_{L^2_{x,v}}\nnm\\
&\quad+C \|\dxa \dx \Phi\|_{L^{2}_x}\|\nu^{\frac{1}{2}} f\|_{L^2_{v}(L^{\infty}_x)}\|\nu^{\frac{1}{2}}\dxa f\|_{L^2_{x,v}}\nnm\\
&\leq C\sqrt{E_N(f)}D_N(f),\quad 1\leq\alpha\leq N.\label{IE16}
\ema

For $J_2(\alpha)$, we have by Sobolev inequality that
\bma
J_2(0)&=\frac{1}{2}\int_{\R\times\R^3}\dx \Phi\partial_{v_1}(f)^2dxdv=0,\label{IE17j}\\
J_2(\alpha)&=\sum_{1\le \alpha'\le \alpha}\int_{\R\times\R^3}\dx^{\alpha'}\Phi\dx^{\alpha-\alpha'}\partial_{v_1}f\dxa fdxdv\nnm\\
&\leq C\sum_{1\leq\alpha'\leq\alpha-1}\|\partial^{\alpha'}_x\dx \Phi\|_{L^{\infty}_x}\|\partial^{\alpha-\alpha'}_x\partial_{v_1}f\|_{L^2_{x,v}}\|\partial^{\alpha}_xf\|_{L^2_{x,v}}\nnm\\
&\quad+C \|\dxa\dx \Phi\|_{L^2_x}\| \partial_{v_1}f\|_{L^2_{v}(L^{\infty}_x)}\|\partial^{\alpha}_xf\|_{L^2_{x,v}}\nnm\\
&\leq C\sqrt{E_N(f)}D_N(f),\quad 1\leq\alpha\leq N.\label{IE17}
\ema

For $J_3(\alpha)$, we have by \eqref{GA1}--\eqref{GA2} that
\bma
J_3(\alpha)&\leq \|\nu^{-\frac{1}{2}}\dxa \Gamma(f,f)\|_{L^2_{x,v}}\|\nu^{\frac{1}{2}}\dxa P_1f\|_{L^2_{x,v}}\nnm\\
&\leq C\sqrt{E_N(f)}D_N(f)+C\sqrt{E_N(f)D_N(f)}\|P_0f\|_{L^2_v(L^{\infty}_x)}.\label{IE18}
\ema

For $J_4(\alpha)$, we obtain by \eqref{E13} that
\bma
J_4(\alpha)&\leq \sum_{\alpha'\leq\alpha}\|\dxa \Phi\|_{L^2_x}\|\dx^{\alpha'}(e^{-\Phi}-1)\|_{L^{\infty}_x}\|\partial^{\alpha-\alpha'}_{x}\dt\Phi\|_{L^2_x}\nnm\\
%&\quad+\|\partial^{\alpha}_{x}\Phi\|_{L^2_x}\|\dx^{\alpha'}(e^{\Phi}-1)\|_{L^{\infty}_x}\|\partial^{\alpha-\alpha'}_{x}\dt\Phi\|_{L^2_x}\nnm\\
&\leq C\sqrt{E_N(f)}D_N(f). \label{IE19j5}
\ema

By combining \eqref{IE15}--\eqref{IE19j5}, we can obtain
\bma
&\quad\frac{1}{2}\Dt(\|\partial^\alpha_xf\|^2_{L^2_{x,v}}+\|\dxa \Phi\|^2_{L^2_x}+\|\dxa \dx \Phi\|^2_{L^2_x})+\mu\|w^{\frac{1}{2}}\partial^{\alpha}_xP_1f\|^2_{L^2_{x,v}}\nnm\\
&\leq C\sqrt{E_N(f)}D_N(f)+C\sqrt{E_N(f)D_N(f)}\|P_0f\|_{L^2_v(L^{\infty}_x)}.\label{IE20}
\ema

By taking the summation $\sum_{\alpha\le N}\eqref{IE20}$, we can obtain \eqref{MI1}.

In order to close the energy estimate, we need to estimate the term $\partial_{v_1}f$. For this, we rewrite \eqref{mVPB8} as
\bma
&\quad\partial_tP_1f+v_1\dx P_1f+\dx \Phi\partial_{v_1}P_1f-L(P_1f)\nnm\\
&=\Gamma(f,f)+\frac{1}{2}v_1\dx \Phi P_1f +P_0\(v_1\dx P_1f+\dx \Phi\partial_{v_1}P_1f-\frac{1}{2}v_1\dx \Phi P_1f\)\nnm\\
&\quad+P_1\(\frac{1}{2}v_1\dx \Phi P_0f-\dx \Phi\partial_{v_1}P_0f-v_1\dx P_0f\).\label{IE7}
\ema

Next, let $1\leq k\leq N$, and choose positive constants $\alpha,\beta$ with $\beta=k$ and $\alpha+\beta\leq N$. Taking the inner product between $\dxa \partial^{\beta}_{v_1}P_1f$ and $\dxa \partial^{\beta}_{v_1}\eqref{IE7}$, we have
\bma
&\quad\frac{1}{2}\Dt\|\dxa \partial^{\beta}_{v_1}P_1f\|^2_{L^2_{x,v}}+\frac{1}{2}\mu\|w^{\frac{1}{2}}\dxa \partial^{\beta}_{v_1}P_1f\|^2_{L^2_{x,v}}\nnm\\
&\leq C(\|\partial^{\alpha}_x\dx P_0f\|^2_{L^2_{x,v}}+\|\partial^{\alpha}_x\dx P_1f\|^2_{L^2_{x,v}})+C\sum_{\beta'=k-1}\|\dxa \partial^{\beta'}_{v_1}\dx P_1f\|^2_{L^2_{x,v}}\nnm\\
&\quad+C\sum_{\beta'\leq k-1}\|\dxa \partial^{\beta'}_{v_1}P_1f\|^2_{L^2_{x,v}}+C\sqrt{E_N(f)}D_N(f)\nnm\\
&\quad+C\sqrt{E_N(f)D_N(f)}\|P_0f\|_{L^2_v(L^{\infty}_x)}.\label{IE26}
\ema

Thus, by taking summation of \eqref{IE26} over $\{\beta=k,\ \alpha+\beta\leq N\}$, we obtain
\bma
&\Dt\sum_{\beta=k,\alpha+\beta\leq N}\|\dxa \partial^{\beta}_{v_1}P_1f\|^2_{L^2_{x,v}}+\mu\sum_{\beta=k,\alpha+\beta\leq N}\|w^{\frac{1}{2}}\dxa \partial^{\beta}_{v_1}P_1f\|^2_{L^2_{x,v}}\nnm\\
&\leq C\sum_{\alpha\leq N-k}(\|\partial^{\alpha}_x\dx P_0f\|^2_{L^2_{x,v}}+\|\partial^{\alpha}_x\dx P_1f\|^2_{L^2_{x,v}})+C_k\sum_{\beta=k-1,\alpha+\beta\leq N}\|\partial^{\alpha}_x\partial_{v_1}^{\beta}P_1f\|^2_{L^2_{x,v}}\nnm\\
&\quad+C\sqrt{E_N(f)}D_N(f)+C\sqrt{E_N(f)D_N(f)}\|P_0f\|_{L^2_v(L^{\infty}_x)}.\label{IE27}
\ema

Then, taking summation $\sum_{1\leq k\leq N}p_k\eqref{IE27}$ with constants $p_k>0$ chosen by
$$
\mu p_k\geq2\sum_{1\leq j\leq N-k}p_{k+j}C_{k+j},\quad 1\leq k\leq N-1,\,\ p_N=1,
$$
we obtain \eqref{MI3}. The proof of the Lemma is completed.
\end{proof}

%%%%%%%%%%%%%%%%%%%%%%%%%%%%%%%%%%%%%%%%%%%%%%%%%%%%%%%%%%%%%%%%%%%%%%%%%%%%%%%%%%%%%%%%%%%%%%%%%%%%%%%%%%%%%%%%%%%%%%%%%%%%%%%%%%%%%%%%%%%%%%%%%%%%%%%%%

With the help of Lemmas \ref{mvpbpw3}--\ref{mvpbpw4}, we have the following energy estimate.
\begin{lem}\label{mvpbpw5}
For $N\geq2$, there are two equivalent energy functionals $\mathcal{E}_{N}(\cdot)\sim E_{N}(\cdot)$ and $\mathcal{H}_{N}(\cdot)\sim H_{N}(\cdot)$ such that if $E_N(f)$ is sufficiently small, then the solution $f(t,x,v)$ to the 1-D mVPB system \eqref{mVPB8}--\eqref{mVPB9} satisfies
\bma
&\Dt\mathcal{E}_{N}(f)(t)+ D_{N}(f)(t)\leq CE_N(f)\|P_0f\|^2_{L^2_v(L^{\infty}_x)},\label{ENI1j1}\\
&\Dt\mathcal{H}_{N}(f)(t)+ D_{N}(f)(t)\leq C\|\dx P_0f\|^2_{L^2_{x,v}}+CE_N(f)\|P_0f\|^2_{L^2_v(L^{\infty}_x)}.\label{ENI2j2}
\ema
\end{lem}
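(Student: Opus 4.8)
The plan is to assemble the two Lyapunov inequalities from the three dissipative estimates already at hand — the macroscopic dissipation \eqref{EMD} of Lemma~\ref{mvpbpw3} and the microscopic estimates \eqref{MI1}, \eqref{MI3} of Lemma~\ref{mvpbpw4} — by a single weighted sum. For \eqref{ENI1j1} I would set
$$\mathcal{E}_N(f)=\mathcal{G}(f)+\kappa_3\,\mathcal{A}_2(f)+\kappa_2\,\mathcal{A}_1(f),$$
where $\mathcal{A}_1(f)=\tfrac12\sum_{\alpha\le N}\big(\|\dxa f\|^2_{L^2_{x,v}}+\|\dxa\Phi\|^2_{H^1_x}\big)$ is the functional under $\Dt$ in \eqref{MI1}, $\mathcal{A}_2(f)=\sum_{1\le k\le N}p_k\sum_{\beta=k,\,\alpha+\beta\le N}\|\dxa\partial^\beta_{v_1}P_1f\|^2_{L^2_{x,v}}$ is the one in \eqref{MI3}, and $\mathcal{G}(f)$ is the macroscopic functional (main part plus the $R_{1i}$- and $m_1\dx n$-cross terms) under $\Dt$ in \eqref{EMD}. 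The constants $p_0$ and $p_k$ are those already fixed in Lemmas~\ref{mvpbpw3}--\ref{mvpbpw4}; the new weights are chosen in the order: first $\kappa_3>0$ small, then $\kappa_2>0$ large (relative to $p_0$), and finally the smallness threshold for $E_N(f)$.

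The core step is the dissipation bookkeeping. Summing \eqref{EMD} with $\kappa_3$ times \eqref{MI3} and $\kappa_2$ times \eqref{MI1}, the left side carries $\kappa_2\mu\sum_{\alpha\le N}\|w^{1/2}\dxa P_1f\|^2_{L^2_{x,v}}$, the weighted $v_1$-derivative dissipation of $P_1f$ from \eqref{MI3}, and the macroscopic dissipation $\sum_{\alpha\le N-1}\big(\|\dxa\dx(n,m,q)\|^2_{L^2_x}+\|\dxa\dx\Phi\|^2_{H^1_x}\big)$ from \eqref{EMD}. The term $\kappa_3C\sum_{\alpha\le N-1}\|\dxa\dx P_0f\|^2_{L^2_{x,v}}$ on the right of \eqref{MI3} is absorbed into the macroscopic dissipation once $\kappa_3$ is small ($\|\dxa\dx P_0f\|$ being comparable to $\|\dxa\dx(n,m,q)\|$), while the leftover $C(1+\kappa_3)\sum_{\alpha\le N-1}\|\dxa\dx P_1f\|^2_{L^2_{x,v}}$ from \eqref{EMD} and \eqref{MI3} is absorbed into $\kappa_2\mu\sum_{\alpha\le N}\|w^{1/2}\dxa P_1f\|^2$ once $\kappa_2\mu$ is large (using $w\ge1$). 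With these absorptions the left side dominates $cD_N(f)$ for some $c>0$, and the remaining nonlinear terms give
$$\Dt\mathcal{E}_N(f)+cD_N(f)\le C\sqrt{E_N(f)}\,D_N(f)+C\sqrt{E_N(f)D_N(f)}\,\|P_0f\|_{L^2_v(L^\infty_x)}.$$
Now the smallness of $E_N(f)$ makes $C\sqrt{E_N(f)}\le c/4$, and Young's inequality gives $C\sqrt{E_N(f)D_N(f)}\,\|P_0f\|_{L^2_v(L^\infty_x)}\le\tfrac c4D_N(f)+CE_N(f)\|P_0f\|^2_{L^2_v(L^\infty_x)}$; absorbing the two $D_N$-terms and rescaling $\mathcal{E}_N$ so the dissipation coefficient becomes $1$ yields \eqref{ENI1j1}. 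The equivalence $\mathcal{E}_N\sim E_N$ is then immediate: $\mathcal{A}_1$ controls all $\dxa f$ and $\dxa\Phi$ in $H^1_x$ for $\alpha\le N$, $\kappa_3\mathcal{A}_2$ supplies the remaining $v_1$-derivatives of $P_1f$ (and $\|\partial^\beta_{v_1}P_0f\|\le C\|P_0f\|$ those of $P_0f$), and the cross terms in $\mathcal{G}$, bounded for $\alpha\le N-1$ by $CE_N(f)$ via Cauchy--Schwarz and the identity $R_{1j}=(v_1\dx L^{-1}P_1f,\chi_j)$, are a genuine perturbation once $\kappa_2$ is large relative to $p_0$.

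For \eqref{ENI2j2} I would repeat the scheme with a Lyapunov functional $\mathcal{H}_N(f)$ comparable to $H_N(f)$ instead of $E_N(f)$: it retains all derivatives of $P_1f$ but only the spatial-gradient-and-higher macroscopic and potential quantities, so the zeroth-order pieces $\|P_0f\|^2_{L^2_{x,v}}$ and $\|\Phi\|^2_{L^2_x}$ are excluded, which forces the macroscopic input to come from the spatial-gradient form of \eqref{EMD}. The one dissipative quantity that cannot then be placed on the left without reintroducing zeroth-order macroscopic energy into $\mathcal{H}_N$ is the lowest macroscopic gradient $\|\dx P_0f\|^2_{L^2_{x,v}}$, so it is kept on the right and produces exactly the extra term in \eqref{ENI2j2}; everything else goes through as before. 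I expect the main obstacle to be precisely this bookkeeping — ordering $p_0$, the $p_k$, $\kappa_3$, $\kappa_2$ and the $E_N$-threshold so that each dissipative term is absorbed once while the indefinite cross terms stay dominated, together with forcing the source into the closed form $CE_N(f)\|P_0f\|^2_{L^2_v(L^\infty_x)}$ rather than the un-absorbable $C\sqrt{E_N(f)D_N(f)}\,\|P_0f\|_{L^2_v(L^\infty_x)}$ or the too-weak $C\|P_0f\|^2_{L^2_v(L^\infty_x)}$; it is this precise right-hand side that makes the subsequent nonlinear closure via the Green's function estimates work.
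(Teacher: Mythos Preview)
Your argument for \eqref{ENI1j1} is essentially the paper's: the paper takes the weighted sum $A_1\eqref{EMD}+A_2\eqref{MI1}+\eqref{MI3}$ with $A_2\mu>CA_1$ large, which after a global rescaling is your combination with the same absorption hierarchy (macroscopic dissipation eats the $P_0$-part of the right of \eqref{MI3}; $A_2\mu\sum\|w^{1/2}\dxa P_1f\|^2$ eats the leftover $P_1$-gradients).

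For \eqref{ENI2j2} there is a concrete gap. You say $\mathcal{H}_N$ ``retains all derivatives of $P_1f$,'' but the three inputs \eqref{EMD}, \eqref{MI1}, \eqref{MI3} do not let you put the zeroth-order piece $\|P_1f\|^2_{L^2_{x,v}}$ of $H_N$ into the Lyapunov functional without dragging in $\|P_0f\|^2_{L^2_{x,v}}$ and $\|\Phi\|^2_{H^1_x}$: estimate \eqref{MI1} at $\alpha=0$ produces $\|f\|^2+\|\Phi\|^2_{H^1_x}$, not $\|P_1f\|^2$ alone, and \eqref{MI3} only covers $\beta\ge1$. The paper closes this with one extra estimate, obtained by pairing the microscopic equation \eqref{IE7} with $P_1f$ at zeroth order,
\[
\tfrac12\Dt\|P_1f\|^2_{L^2_{x,v}}+\tfrac{\mu}{2}\|w^{1/2}P_1f\|^2_{L^2_{x,v}}\le C\|\dx P_0f\|^2_{L^2_{x,v}}+C\sqrt{E_N}D_N+C\sqrt{E_ND_N}\,\|P_0f\|_{L^2_v(L^\infty_x)},
\]
namely \eqref{IE13}. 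This is the actual source of the $C\|\dx P_0f\|^2_{L^2_{x,v}}$ on the right of \eqref{ENI2j2}: it comes from the linear driving term $-P_1(v_1\dx P_0f)$ in \eqref{IE7}, not from an inability to absorb a macroscopic gradient into the left as you suggest. The paper's full combination is $A_1p_0\sum_{1\le\alpha\le N-1}[\eqref{E15}{+}\eqref{E19}{+}\eqref{E25}]+A_1\sum_{\alpha\le N-1}\eqref{E26}+A_2\sum_{1\le\alpha\le N}\eqref{IE20}+\eqref{MI3}+\eqref{IE13}$; your sketch handles everything except the last summand.
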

\begin{proof}
Assume that
$$
E(f)(t)\leq \delta
$$
for $\delta>0$ small. Taking the summation of $A_1\eqref{EMD}+A_2\eqref{MI1}+\eqref{MI3}$ with $A_2\mu>CA_1>0$ large enough, we have
\bma
&A_1p_0\Dt\sum_{\alpha\leq N-1}(\|\dxa (n,m,q)\|^2_{L^2_{x,v}}+\|\dxa \Phi\|^2_{H^1_x})+A_2\Dt\sum_{\alpha\leq N}(\|\dxa f\|^2_{L^2_{x,v}}+\|\dxa \Phi\|^2_{H^1_x})\nnm\\
&+2A_1p_0\Dt\sum_{\alpha\leq N-1}\(\sum^3_{i=1}\int_{\R}\partial^{\alpha}_xR_{1i}\partial^{\alpha}_xm_idx+\int_{\R}\dxa R_{14}\dxa qdx\)\nnm\\
&+4A_1\Dt\sum_{\alpha\leq N-1}\int_{\R}\dxa m_1\dxa \partial_xndx+\Dt\sum_{1\leq k\leq N}p_k\sum_{\beta=k, \alpha+\beta\leq N}\|\dxa \partial^{\beta}_{v_1}P_1f\|^2_{L^2_{x,v}}\nnm\\
&+A_2\mu\sum_{\alpha\leq N}\|w^{\frac{1}{2}}\dxa P_1f\|^2_{L^2_{x,v}}+\mu\sum_{\alpha+\beta\leq N}\|w^{\frac{1}{2}}\dxa \partial^{\beta}_{v_1}P_1f\|^2_{L^2_{x,v}}\nnm\\
&+\sum_{\alpha\leq N-1}(\|\dxa \partial_x(n,m,q)\|^2_{L^2_x}+\|\dxa \partial_x\Phi\|^2_{H^1_x})\leq CE_N(f)\|P_0f\|^2_{L^2_v(L^{\infty}_x)},\label{EI2}
\ema
which implies \eqref{ENI1j1}.

By taking the inner product between $P_1f$ and \eqref{IE7}, we have
\bma
&\frac{1}{2}\Dt\|P_1f\|^2_{L^2_{x,v}}+\frac{1}{2}\mu\|w^{\frac{1}{2}}P_1f\|^2_{L^2_{x,v}} \nnm\\
&\leq C\|\dx P_0f\|^2_{L^2_{x,v}}+C\sqrt{E_N(f)}D_N(f)+C\sqrt{E_N(f)D_N(f)}\|P_0f\|_{L^2_v(L^{\infty}_x)}.\label{IE13}
\ema

Taking the summation of
$$
A_1p_0\sum_{1\leq\alpha\leq N}[\eqref{E15}+\eqref{E19}+\eqref{E25}]+A_1\sum_{\alpha\leq N}\eqref{E26}+A_2\sum_{1\leq\alpha\leq N}\eqref{IE20}+\eqref{MI3}+\eqref{IE13}
$$
with $A_2\mu>CA_1>0$ large enough, we have
\bma
&A_1p_0\Dt\sum_{1\leq\alpha\leq N-1}(\|\dxa (n,m,q)\|^2_{L^2_{x,v}}+\|\dxa \Phi\|^2_{H^1_x})\nnm\\
&+2A_1p_0\Dt\sum_{1\leq\alpha\leq N-1}\(\sum^3_{i=1}\int_{\R}\partial^{\alpha}_xR_{1i}\partial^{\alpha}_xm_idx+\int_{\R}\dxa R_{14}\dxa qdx\)\nnm\\
&+4A_1\Dt\sum_{\alpha\leq N-1}\int_{\R}\dxa m_1\dxa \partial_xndx+A_2\Dt\sum_{1\leq\alpha\leq N}(\|\dxa f\|^2_{L^2_{x,v}}+\|\dxa \Phi\|^2_{H^1_x})\nnm\\
&+\Dt\|P_1f\|^2_{L^2_{x,v}}+\Dt\sum_{1\leq k\leq N}p_k\sum_{\beta=k,\alpha+\beta\leq N}\|\partial^{\alpha}_x\partial^{\beta}_{v_1}P_1f\|^2_{L^2_{x,v}}+\mu\sum_{\alpha\leq N}\|w^{\frac{1}{2}}\dxa P_1f\|^2_{L^2_{x,v}}\nnm\\
&+\mu\sum_{\alpha+\beta\leq N}\|w^{\frac{1}{2}}\dxa \partial^{\beta}_{v_1}P_1f\|^2_{L^2_{x,v}}+A_1\sum_{\alpha\leq N-1}(\|\dxa \partial_x(n,m,q)\|^2_{L^2_x}+\|\dxa \partial_x\Phi\|^2_{H^1_x})\nnm\\
&\leq C\|\dx P_0f\|^2_{L^2_{x,v}}+CE_N(f)\|P_0f\|^2_{L^2_v(L^{\infty}_x)}.\label{EI3}
\ema
which proves \eqref{ENI2j2}. The proof is completed.
\end{proof}

%%%%%%%%%%%%%%%%%%%%%%%%%%%%%%%%%%%%%%%%%%%%%%%%%%%%%%%%%%%%%%%%%%%%%%%%%%%%%%%%%%%%%%%%%%%%%%%%%%%%%%%%%%%%%%%%%%%%%%%%%%%%%%%%%%%%%%%%%%%%%%%%%%%%%%%
Repeating the proof of Lemmas \ref{mvpbpw3}--\ref{mvpbpw5}, we have the following weighted energy estimate.
\begin{lem}\label{mvpbpw6}
For $N\geq2$ and $k\ge 1$, there are two equivalent energy functionals $\mathcal{E}_{N,k}(\cdot)\sim E_{N,k}(\cdot)$ and $\mathcal{H}_{N,k}(\cdot)\sim H_{N,k}(\cdot)$ such that if $E_{N,k}(f)$ is sufficiently small, then the solution $f(t,x,v)$ to the 1-D mVPB system \eqref{mVPB8}--\eqref{mVPB9} satisfies
\bma
&\Dt\mathcal{E}_{N,k}(f)(t)+ D_{N,k}(f)(t)\leq CE_N(f)\|P_0f\|^2_{L^2_v(L^{\infty}_x)},\label{ENI1}\\
&\Dt\mathcal{H}_{N,k}(f)(t)+ D_{N,k}(f)(t)\leq C\|\dx P_0f\|^2_{L^2_{x,v}}+CE_N(f)\|P_0f\|^2_{L^2_v(L^{\infty}_x)}.\label{ENI2}
\ema
\end{lem}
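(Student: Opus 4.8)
The plan is to rerun the argument of Lemmas \ref{mvpbpw3}--\ref{mvpbpw5} after inserting the velocity weight $w^{2k}=(1+|v|^2)^k$ into every $L^2_{x,v}$ inner product taken against $P_1f$ and its $x,v_1$-derivatives. The macroscopic part comes essentially for free: the Euler--Poisson and Navier--Stokes--Poisson type systems \eqref{E0j1}--\eqref{E0j5} and \eqref{E1}--\eqref{E5} involve only the velocity moments $(n,m,q)$ of $f$ and the potential $\Phi=\Phi(t,x)$, none of which sees the weight, so the analogue of \eqref{EMD} holds verbatim, with the microscopic source term $\sum_{\alpha\le N-1}\|\dxa\dx P_1f\|^2_{L^2_{x,v}}$ now trivially dominated by $D_{N,k}(f)$ since $w^{k+1/2}\ge1$. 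Hence the only genuinely new work is the weighted versions of \eqref{MI1} and \eqref{MI3}.

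For the weighted microscopic estimate one applies $\dxa\partial_{v_1}^\beta$ to the $P_1f$ equation \eqref{IE7} and pairs it with $w^{2k}\dxa\partial_{v_1}^\beta P_1f$. The transport operator $v_1\dx$ commutes with $w^{2k}$ and with $\dxa$, so after integrating by parts in $x$ its principal term drops out; the commutator $[\partial_{v_1}^\beta,v_1\dx]$ yields only $\dx\partial_{v_1}^{\beta-1}$-type terms, absorbed by $\eps\|w^{k+1/2}\dxa\partial_{v_1}^\beta P_1f\|^2$ together with strictly lower $v_1$-order contributions already present in $D_{N,k}$ (this is the role of the weights $p_k$, as in the proof of Lemma \ref{mvpbpw4}). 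For the collision term one uses the weighted coercivity of $L$: writing $L=K-\nu$ as in \eqref{LKF}, the bound $\nu(v)\sim(1+|v|)$ from \eqref{nuv} gives, for all $g$, the estimate $-(w^{2k}Lg,g)\ge\mu\|w^{k+1/2}g\|^2-C\|g\|^2$, the correction coming from the smoothing of $K$ on weighted $L^2_v$ together with the local coercivity \eqref{L_3}; the commutators $[\partial_{v_1}^\beta,\nu]$ (controlled because $\partial_{v_1}^j\nu$ is bounded for $j\ge1$) and $[\partial_{v_1}^\beta,K]$ (controlled by the smoothing property of $K$, which trades a velocity derivative on the kernel for one fewer derivative on $f$) produce only lower weight/velocity-derivative order terms, absorbed the same way.

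The nonlinear and field terms require only the weighted analogues of the bilinear estimate for $\Gamma$ and of the electric-field contributions $\frac12 v_1\dx\Phi\,f$ and $\dx\Phi\,\partial_{v_1}f$, proved exactly as in Lemma \ref{mvpbpw4} with $w^k$ carried through and the one-dimensional interpolation $\|u\|_{L^\infty_x}\le C\|u\|^{1/2}_{L^2_x}\|\dx u\|^{1/2}_{L^2_x}$ used in place of the three-dimensional Sobolev inequality. The only point to watch is that each $v_1$-multiplication costs at most one half power of $w$, which is precisely what the dissipative norm $\|w^{k+1/2}\cdot\|$ in $D_{N,k}$ affords; moreover, since the macroscopic component $P_0f$ carries Gaussian decay in $v$, any weight falling on $P_0f$ is harmless, so the right-hand sides retain the unweighted quantities $E_N(f)$ and $D_N(f)$ wherever $P_0f$ appears. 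Summing over $\{\beta=k,\ \alpha+\beta\le N\}$ and then over $1\le k\le N$ with the hierarchy $p_k$ of Lemma \ref{mvpbpw4} gives the weighted counterparts of \eqref{MI1} and \eqref{MI3}.

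Finally one combines the weighted macroscopic and microscopic estimates exactly as in the proof of Lemma \ref{mvpbpw5}: a linear combination of the weighted analogue of \eqref{EMD}, the weighted analogue of \eqref{MI1} and the weighted analogue of \eqref{MI3}, with constants $A_1,A_2>0$ chosen so that $A_2\mu\gg CA_1\gg1$, produces a functional $\mathcal E_{N,k}\sim E_{N,k}$ satisfying $\Dt\mathcal E_{N,k}+D_{N,k}\le C\sqrt{E_{N,k}}\,D_{N,k}+C\sqrt{E_{N,k}D_{N,k}}\,\|P_0f\|_{L^2_v(L^\infty_x)}$; smallness of $E_{N,k}(f)$ absorbs the first term and Young's inequality on the second yields \eqref{ENI1}. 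For \eqref{ENI2} one replaces, as in the derivation of \eqref{ENI2j2}, the zeroth-order macroscopic piece by the estimate obtained from pairing \eqref{IE7} with $w^{2k}P_1f$, which introduces the extra term $C\|\dx P_0f\|^2_{L^2_{x,v}}$ on the right-hand side and gives $\mathcal H_{N,k}\sim H_{N,k}$ with \eqref{ENI2}. The main obstacle is essentially bookkeeping: propagating the weight through the collision operator — the weighted coercivity and the $\partial_{v_1}^\beta$-commutators — and through $\Gamma$, while checking that no step costs more than the half power of $w$ that $D_{N,k}$ supplies.
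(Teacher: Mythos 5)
Your proposal matches the paper's approach exactly: the paper's proof of Lemma \ref{mvpbpw6} is simply the remark that one repeats the arguments of Lemmas \ref{mvpbpw3}--\ref{mvpbpw5} with the weight $w^k$ inserted, which is precisely what you carry out, and your bookkeeping of the weighted coercivity of $L$, the $\partial_{v_1}^\beta$-commutators, and the allocation of half powers of $w$ to the dissipation is the standard (and correct) way to make that remark rigorous. The observation that the macroscopic estimate \eqref{EMD} is unchanged because it involves only moments and $\Phi$, with the microscopic source dominated by $D_{N,k}$ since $w\ge1$, is also in line with what the paper implicitly uses.
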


%%%%%%%%%%%%%%%%%%%%%%%%%%%%%%%%%%%%%%%%%%%%%%%%%%%%%%%%%%%%%%%%%%%%%%%%%%%%%%%%%%%%%%%%%%%%%%%%%%%%%%%%%%%%%%%%%%%%%%%%%%%%%%%%%%%%%%%%%%%%%%%%%%%%%
%
%                                                                4.2
%
%%%%%%%%%%%%%%%%%%%%%%%%%%%%%%%%%%%%%%%%%%%%%%%%%%%%%%%%%%%%%%%%%%%%%%%%%%%%%%%%%%%%%%%%%%%%%%%%%%%%%%%%%%%%%%%%%%%%%%%%%%%%%%%%%%%%%%%%%%%%%%%%%%%%%%%%%%%%%

\subsection{The pointwise estimate}

In this subsection, we prove Theorem \ref{mvpbth2} on the pointwise behaviors of the global solution to the nonlinear mVPB system with the help of the estimates of the Green's function given in Section \ref{sect3}.

%%%%%%%%%%%%%%%%%%%%%%%%%%%%%%%%%%%%%%%%%%%%%%%%%%%%%%%%%%%%%%%%%%%%%%%%%%%引理4.1%%%%%%%%%%%%%%%%%%%%%%%%%%%%%%%%%%%%%%%%%%%%%%%%%%%%%%%%%%%%%%%%%%%%%%%

%%%%%%%%%%%%%%%%%%%%%%%%%%%%%%%%%%%%%%%%%%%%%%%%%%%%%%%%%%%%%%%%%%%%%%%%%%引理4.7%%%%%%%%%%%%%%%%%%%%%%%%%%%%%%%%%%%%%%%%%%%%%%%%%%%%%%%%%%%%%%%%%%%%%%%%%%%%%%%%%%%%%%%%%%%%%%%%%%%%%%%%%%%%%%%%%%%%%%%%%%%%%%%%%%%%%
%Then, we are able to obtain estimate the bound of convolution of initial data and Diffusion waves.
First, we  give some basic estimates  of convolution of the initial data and different waves in order to analyze the pointwise behaviors of the solution. Indeed, we have

\begin{lem}[\cite{LI-6}]\label{mvpbpw8}
For any given $\lambda\in\R,\ \alpha\geq0,\ \gamma_0>\frac{1}{2}$, there exist a constant $C>0$ such that
\bma
\int_{\mathbb{R}}\frac{e^{-\frac{|x-y-\lambda t|^2}{D(1+t)}}}{(1+t)^{\alpha}}(1+|y|^2)^{-\gamma_0} dy\leq C(1+t)^{-\alpha}B_{\gamma_0}(t,x-\lambda t).\label{CI}
\ema
\end{lem}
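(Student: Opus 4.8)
The plan is to reduce the estimate to a one–dimensional Gaussian convolution bound and then split the spatial integral according to the size of $z:=x-\lambda t$ relative to $\sqrt{1+t}$. First I would pull the factor $(1+t)^{-\alpha}$ out of the integral (it does not depend on $y$), so that it suffices to prove
$$I:=\int_{\R}e^{-\frac{|z-y|^2}{D(1+t)}}(1+|y|^2)^{-\gamma_0}\,dy\le C\Big(1+\frac{|z|^2}{1+t}\Big)^{-\gamma_0}=CB_{\gamma_0}(t,z).$$
The crude bound $I\le\int_{\R}(1+|y|^2)^{-\gamma_0}\,dy=:C_0<\infty$ is available because $\gamma_0>\tfrac12$, and this already settles the case $|z|\le\sqrt{1+t}$: there $1+\frac{|z|^2}{1+t}\le 2$, hence $B_{\gamma_0}(t,z)\ge 2^{-\gamma_0}$ and $I\le C_0\le 2^{\gamma_0}C_0\,B_{\gamma_0}(t,z)$.

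For $|z|>\sqrt{1+t}$ I would split $I=I_{\mathrm{near}}+I_{\mathrm{far}}$ at $|y-z|=|z|/2$. On $\{|y-z|\le|z|/2\}$ one has $|y|\ge|z|/2$, so $(1+|y|^2)^{-\gamma_0}\le 4^{\gamma_0}|z|^{-2\gamma_0}$, while $\int_{\R}e^{-\frac{|z-y|^2}{D(1+t)}}\,dy=\sqrt{\pi D}\,\sqrt{1+t}$; thus $I_{\mathrm{near}}\le C\sqrt{1+t}\,|z|^{-2\gamma_0}\le C(1+t)^{\gamma_0}|z|^{-2\gamma_0}$, using $\gamma_0\ge\tfrac12$ and $1+t\ge1$, and since $1+t+|z|^2\le 2|z|^2$ on this region, $(1+t)^{\gamma_0}|z|^{-2\gamma_0}\le 2^{\gamma_0}B_{\gamma_0}(t,z)$, giving $I_{\mathrm{near}}\le CB_{\gamma_0}(t,z)$. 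On $\{|y-z|>|z|/2\}$ one has $e^{-\frac{|z-y|^2}{D(1+t)}}\le e^{-\frac{|z|^2}{4D(1+t)}}$, hence $I_{\mathrm{far}}\le C_0\,e^{-\frac{|z|^2}{4D(1+t)}}$, and writing $s=|z|^2/(1+t)$ and noting that $s\mapsto e^{-s/(4D)}(1+s)^{\gamma_0}$ is bounded on $[0,\infty)$, we get $e^{-\frac{|z|^2}{4D(1+t)}}\le CB_{\gamma_0}(t,z)$. Adding these and restoring the factor $(1+t)^{-\alpha}$ yields \eqref{CI}.

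The computation is elementary and I do not expect a genuine obstacle; the only point needing care is that \eqref{CI} is essentially sharp as $\gamma_0\downarrow\tfrac12$, so the hypothesis $\gamma_0>\tfrac12$ has to be used twice — once for the convergence of $\int_{\R}(1+|y|^2)^{-\gamma_0}\,dy$, and once to absorb the $\sqrt{1+t}$ produced by the total Gaussian mass into the factor $(1+t)^{\gamma_0}$ carried by $B_{\gamma_0}(t,z)$. This is exactly the mechanism used in \cite{LI-6}.
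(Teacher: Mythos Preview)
Your argument is correct and matches the paper's approach essentially line for line: the paper (in its commented-out proof, which is what is cited from \cite{LI-6}) also distinguishes $|x-\lambda t|\lessgtr\sqrt{1+t}$, uses the integrability of $(1+|y|^2)^{-\gamma_0}$ in the near case, and in the far case splits the integral in two to exploit either the Gaussian decay or the weight decay. The only cosmetic difference is that the paper splits on $|y|\lessgtr\tfrac12|x-\lambda t|$ while you split on $|y-z|\lessgtr\tfrac12|z|$; these are dual versions of the same trick and lead to the same estimates.
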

\iffalse
\begin{proof}
By direct computation, we have for $|x-\lambda t|<\sqrt{1+t}$,
\bma
&\int_{\mathbb{R}}\frac{e^{-\frac{|x-y-\lambda t|^2}{C_0(1+t)}}}{(1+t)^{\alpha}}(1+|y|^2)^{-\gamma_0} dy\nnm\\
&\leq C(1+t)^{-\alpha}\leq C(1+t)^{-\alpha}B_{\gamma_0}(t,x-\lambda t).\label{CI1}
\ema
For $|x-\lambda t|>\sqrt{1+t}$, we decompose $y$ into $|y|\leq\frac{|x-\lambda t|}{2}$ and $|y|\geq\frac{|x-\lambda t|}{2}$.
Then
\bma
&\(\int_{|y|\leq\frac{|x-\lambda t|}{2}}+\int_{|y|\geq\frac{|x-\lambda t|}{2}}\)\frac{e^{-\frac{|x-y|^2}{C_0(1+t)}}}{(1+t)^{\alpha}}(1+|y|^2)^{-\gamma_0}dy\nonumber\\
&\leq C(1+t)^{-\alpha}e^{-\frac{|x-\lambda t|^2}{4C_0(1+t)}}\int_{|y|\leq\frac{|x-\lambda t|}{2}}(1+|y|)^{-\gamma_0}dy\nonumber\\
&\quad +C(1+t)^{-\alpha}(1+|x-\lambda t|^2)^{-\gamma_0}\int_{|y|\geq\frac{|x-\lambda t|}{2}}e^{-\frac{|x-y|^2}{C_0(1+t)}}dy\nonumber\\
&\leq C(1+t)^{-\alpha}B_{\gamma_0}(t,x-\lambda t).\label{CI2}
\ema
This together with \eqref{CI1} and \eqref{CI2} lead to \eqref{CI}. The proof of the Lemma is completed.
\end{proof}
\fi

%%%%%%%%%%%%%%%%%%%%%%%%%%%%%%%%%%%%%%%%%%%%%%%%%%%%%%%%%%%%%%%%引理4.4%%%%%%%%%%%%%%%%%%%%%%%%%%%%%%%%%%%%%%%%%%%%%%%%%%%%%%%%%%%%%%%

%%%%%%%%%%%%%%%%%%%%%%%%%%%%%%%%%%%%%%%%%%%%%%%%%%%%%%%%%%%%%%%%%%引理4.8%%%%%%%%%%%%%%%%%%%%%%%%%%%%%%%%%%%%%%%%%%%%%%%%%%%%%%%%%%%%%%%%%%%%%%%%%%%%%%%%

Set
\bma
&\quad I^{\alpha,\beta,\gamma}(t,x;t_1,t_2;\lambda,\mu,D)\nnm\\
&=\int^{t_2}_{t_1}\int_{\R}(1+t-s)^{-\alpha}e^{-\frac{|x-y-\lambda(t-s)|^2}{D(1+t-s)}}(1+s)^{-\beta}B_{\gamma}(y-\mu s,s)dyds,\label{CAI1}\\
&\quad J^{\beta,\gamma}(t,x;t_1,t_2;\lambda,D)\nnm\\
&=\int^{t_2}_{t_1}\int_{\R}e^{-\frac{|x-y|+t-s}{D}}(1+s)^{-\beta}B_{\gamma}(s,y-\lambda s)dyds.\label{CJ0}
\ema
%Then, similarly to the proofs of the Lemma \ref{mvpbpw8}, we are able to show the following Lemma.

\begin{lem}[\cite{LI-6}]\label{mvpbpw9}
For any given $\beta,\gamma\ge 1$, $\lambda,\mu\in \R$, $D>0$ and  $\gamma>\frac{1}{2}$, there exists  a constant $C>0$ such that the following holds. \\
(1) For $\lambda=\mu$,
\bma
&I^{\alpha,\beta,\gamma}\(t,x;0,\frac{t}{2};\lambda,\mu,D\)\leq C(1+t)^{-\alpha}B_{\gamma}(t,x-\lambda t)\mathrm{T}^{\beta-\frac{1}{2}}(t),\label{nlj1}\\
&I^{\alpha,\beta,\gamma}\(t,x;\frac{t}{2},t;\lambda,\mu,D\)\leq C(1+t)^{-\beta}B_{\gamma}(t,x-\lambda t)\mathrm{T}^{\alpha-\frac{1}{2}}(t).\label{nlj2}
\ema
(2) For $\lambda\neq\mu$,
\bma
&I^{\alpha,\beta,\gamma}\(t,x;0,\frac{t}{2};\lambda,\mu,D\)\nnm\\
&\leq C(1+t)^{-\alpha}B_{\gamma}(t,x-\lambda t)\mathrm{T}^{\beta-\frac{1}{2}}(t)\nonumber\\
&\quad+C(1+t)^{-\alpha+\frac{1}{2}}(1+x-\lambda t)^{-\beta+\frac{1}{2}}1_{\lambda t+\sqrt{1+t}\leq x\leq\mu t-\sqrt{1+t}},\label{45j1}\\
&I^{\alpha,\beta,\gamma}\(t,x;\frac{t}{2},t;\lambda,\mu,D\)\nnm\\
&\leq C(1+t)^{-\beta}B_{\gamma}(x-\mu t,t)\mathrm{T}^{\alpha-\frac{1}{2}}(t)\nonumber\\
&\quad+C(1+t)^{-\beta+\frac{1}{2}}(1+\mu t-x)^{-\alpha+\frac{1}{2}}1_{\lambda t+\sqrt{1+t}\leq x\leq\mu t-\sqrt{1+t}},\label{45j2}
\ema
where
$$
\mathrm{T}^{k}(t)=\int^{t}_0\frac{1}{(1+s)^k}ds=
\left\{\bal
1-\frac{1}{(1+t)^{k-1}}, & k>1,\\
\mathrm{ln}(1+t), & k=1,\\
\frac{1}{(1+t)^{k-1}}-1, & k<1.
\ea\right.
$$
%and $I^{\alpha,\gamma_1,\gamma}\(t,x;0,t;\lambda,\mu,D\)$ is defined by \eqref{CAI1}.
\end{lem}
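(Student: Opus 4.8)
\emph{Proof proposal for Lemma \ref{mvpbpw9}.} The plan is to carry out the two integrations in \eqref{CAI1} one at a time: first the spatial integral in $y$ for fixed $s$, which is a convolution of a Gaussian heat kernel of width $\sqrt{1+t-s}$ travelling with speed $\lambda$ against the algebraic profile $B_\gamma(\cdot-\mu s,s)$ of width $\sqrt{1+s}$, and afterwards the remaining integral in $s$, which is split at $s=t/2$ exactly as in the statement.

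The first ingredient is a scalar convolution estimate: for $\gamma>\tfrac12$, $\tau,\sigma>0$, $m\in\R$ and $D>0$,
\bq
\int_{\R}e^{-\frac{(m-z)^2}{D\tau}}\Big(1+\frac{z^2}{\sigma}\Big)^{-\gamma}dz\le C\,\min\{\sqrt{\tau},\sqrt{\sigma}\}\,\Big(1+\frac{m^2}{\tau+\sigma}\Big)^{-\gamma},\qquad C=C(\gamma,D).
\eq
When $\sigma\le\tau$ this is precisely Lemma \ref{mvpbpw8} (taken with $\alpha=0$, $\lambda=0$ and $1+t=\tau/\sigma\ge1$) after the rescaling $z=\sqrt{\sigma}\,z'$, which replaces $\sigma$ by $1$ and $\tau$ by $\tau/\sigma$; when $\sigma\ge\tau$ one argues directly, splitting $\{|z-m|\le|m|/2\}$ from its complement, estimating the Gaussian on the far set by $e^{-m^2/(4D\tau)}$ and converting it via the super-polynomial decay $e^{-u}\le C_\gamma u^{-\gamma}$. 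Applying this with $\tau=1+t-s$, $\sigma=1+s$, $m=(x-\lambda t)-(\mu-\lambda)s$ (after the substitution $z=y-\mu s$) and using $\tau+\sigma=2+t\sim1+t$, the inner $y$-integral of \eqref{CAI1} is dominated by
\bq
C(1+t-s)^{-\alpha}(1+s)^{-\beta}\min\{\sqrt{1+s},\sqrt{1+t-s}\}\,\Big(1+\tfrac{|(x-\lambda t)-(\mu-\lambda)s|^{2}}{1+t}\Big)^{-\gamma}.
\eq

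Now split the $s$-integral at $s=t/2$. On $[0,t/2]$ one has $\min\{\sqrt{1+s},\sqrt{1+t-s}\}=\sqrt{1+s}$ and $(1+t-s)^{-\alpha}\le C(1+t)^{-\alpha}$, so $I^{\alpha,\beta,\gamma}(t,x;0,\tfrac t2;\lambda,\mu,D)$ is bounded by $C(1+t)^{-\alpha}$ times $\int_{0}^{t/2}(1+s)^{-\beta+\frac12}\big(1+\tfrac{|\xi-(\mu-\lambda)s|^2}{1+t}\big)^{-\gamma}ds$, where $\xi:=x-\lambda t$; on $[t/2,t]$, after $s\mapsto t-s$, it is $C(1+t)^{-\beta}$ times $\int_{0}^{t/2}(1+s)^{-\alpha+\frac12}\big(1+\tfrac{|x-\mu t+(\mu-\lambda)s|^{2}}{1+t}\big)^{-\gamma}ds$. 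When $\lambda=\mu$ the algebraic factor is independent of $s$ and equals, up to a constant, $B_\gamma(t,x-\lambda t)$ in the first integral and $B_\gamma(x-\mu t,t)=B_\gamma(t,x-\lambda t)$ in the second, while $\int_{0}^{t/2}(1+s)^{-\beta+\frac12}ds\le C\,\mathrm{T}^{\beta-\frac12}(t)$ and $\int_{0}^{t/2}(1+s)^{-\alpha+\frac12}ds\le C\,\mathrm{T}^{\alpha-\frac12}(t)$; this yields \eqref{nlj1}--\eqref{nlj2}. When $\lambda\ne\mu$ (WLOG $\lambda<\mu$) the algebraic factor is a bump in $s$ centered at the crossing time $s_{*}:=\xi/(\mu-\lambda)$ of width $\sim\sqrt{1+t}/(\mu-\lambda)$, and the argument splits according to the location of $s_{*}$. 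If $s_{*}$ lies outside (or on the far side of) the interval under consideration, or if $\xi\le0$, then $|\xi-(\mu-\lambda)s|\ge c|\xi|$ (resp. $|x-\mu t+(\mu-\lambda)s|\ge c|x-\mu t|$) throughout, the bump is $\le C\,B_\gamma(t,x-\lambda t)$ (resp. $\le C\,B_\gamma(x-\mu t,t)$), and one is back to the $\lambda=\mu$ case. If instead $s_{*}$ lies inside the interval, then the constraint $s_{*}\in[0,t/2]$ forces $s_{*}\ge\sqrt{1+t}/(\mu-\lambda)$ and $s_{*}\in[t/2,t]$ forces $s_{*}\le t-\sqrt{1+t}/(\mu-\lambda)$: combined with $s_{*}>0$ these are precisely the condition $\lambda t+\sqrt{1+t}\le x\le\mu t-\sqrt{1+t}$, and there the part of the integral near $s_{*}$ contributes (up to the $(\mu-\lambda)$-constant) the bump's total mass $\sqrt{1+t}$ times the surviving weight evaluated at $s_{*}$, i.e. $(1+s_{*})^{-\beta+\frac12}\sim(1+x-\lambda t)^{-\beta+\frac12}$ in the first case and $(1+t-s_{*})^{-\alpha+\frac12}\sim(1+\mu t-x)^{-\alpha+\frac12}$ in the second; multiplying by the prefactors $(1+t)^{-\alpha}$ and $(1+t)^{-\beta}$ produces exactly the extra indicator terms of \eqref{45j1}--\eqref{45j2}, while the portion of the integral away from the bump is again controlled by $B_\gamma$ times a $\mathrm{T}$-factor (here one uses $\int_{0}^{t/2}(1+s)^{-\beta+\frac12}\,ds\le C\,\mathrm{T}^{\beta-\frac12}(t)$ together with, when $|\xi|\le\sqrt{1+t}$, $B_\gamma(t,\xi)\sim1$).

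The routine parts are the scalar convolution inequality and the reduction to the two one-dimensional $s$-integrals. The main obstacle is the case analysis in the $\lambda\ne\mu$ situation: one must organize the splitting of $[0,t/2]$ and $[t/2,t]$ so that simultaneously the near-constant time weight is extracted with the correct power of $1+t$, the bump is either captured inside the interval (yielding the indicator term with the precise exponents $-\alpha+\tfrac12$, $-\beta+\tfrac12$ on $1+t$ and on $1+|x-\lambda t|$ or $1+|\mu t-x|$) or is shown to be subordinate to the appropriate $B_\gamma$, and the transition regime — when $s_{*}$ falls within one bump-width of an endpoint, or when $|\xi|\lesssim\sqrt{1+t}$ so that several of the above sub-regions overlap — is handled without degrading these exponents. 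Verifying the comparabilities $1+s_{*}\sim1+|x-\lambda t|$ and $1+t-s_{*}\sim1+|\mu t-x|$ on the relevant ranges, and that the tail contributions are absorbed into $\mathrm{T}^{\beta-\frac12}(t)$ (which need not be bounded when $\beta\le\tfrac32$), is the technical heart of the proof.
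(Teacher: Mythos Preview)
The paper does not prove this lemma at all: it is quoted verbatim from \cite{LI-6} and no argument is reproduced. Your proposal is therefore not being compared against a proof in the present paper but against the (unreproduced) argument of \cite{LI-6}; the approach you outline---first collapse the $y$-convolution of the Gaussian heat kernel with $B_\gamma$ into a single profile of width $\sqrt{1+t}$ via a scalar estimate of Lemma~\ref{mvpbpw8} type, then analyze the remaining $s$-integral by locating the crossing time $s_*=(x-\lambda t)/(\mu-\lambda)$ relative to the interval---is exactly the standard method used in \cite{LI-6} and in the earlier Liu--Yu work, so your route is the intended one.

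One small imprecision worth fixing: the sentence ``the constraint $s_{*}\in[0,t/2]$ forces $s_{*}\ge\sqrt{1+t}/(\mu-\lambda)$'' is not literally correct as stated. What you need is the observation that when $|\xi|\le\sqrt{1+t}$ one has $B_\gamma(t,\xi)\sim 1$, so the whole integral is already dominated by the first term on the right of \eqref{45j1}; only when $\xi>\sqrt{1+t}$ does the indicator term arise, and \emph{then} $s_*=\xi/(\mu-\lambda)\ge\sqrt{1+t}/(\mu-\lambda)$ follows. With that clarification (and the symmetric one for \eqref{45j2}) your sketch is correct and complete.
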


%%%%%%%%%%%%%%%%%%%%%%%%%%%%%%%%%%%%%%%%%%%%%%%%%%%%%%%%%%%%%%%%%%%%%%%%%%%%%%%%%%%%%%%%%%%%%%%%%%%%%%%%%%%%%%%%%%%%%%%%%%%%%%%%%%%%%%%%%%%%%%%%%%%%%%%%

\begin{lem}[\cite{LI-6}]\label{mvpbpw10}
For any given $\beta,\gamma\ge 0$ and $D>0$, there exists a constant $C>0$ such that
\bma
J^{\beta,\gamma}(t,x;0,t;\lambda,D)\leq C(1+t)^{-\beta}B_{\gamma}(t,x-\lambda t).\label{CJ1}
\ema
%where $J^{\beta,1}(t,x;0,t;\lambda,D)$ is defined by \eqref{CJ0}.

In particular, for any $\alpha\geq0$,
\bq
J^{\beta,\gamma}(t,x;0,t;\lambda,D)\leq CI^{\alpha,\beta,\gamma}\(t,x;0,t;\lambda,\mu,C_0\).\label{CJ1j1}
\eq
\end{lem}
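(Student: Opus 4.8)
The plan is to prove the first inequality by direct estimation and then deduce the second. Throughout I write $\chi=x-\lambda t$, so that $x-\lambda s=\chi+\lambda(t-s)$, and I use the factorization $e^{-(|x-y|+t-s)/D}=e^{-|x-y|/D}\,e^{-(t-s)/D}$. The first step is an elementary spatial convolution bound: for all $D>0$, $\gamma\ge0$ and $M\ge1$,
\begin{equation*}
\int_{\R}e^{-|z-y|/D}\Big(1+\tfrac{|y|^{2}}{M}\Big)^{-\gamma}dy\le C_{D,\gamma}\Big(1+\tfrac{|z|^{2}}{M}\Big)^{-\gamma},
\end{equation*}
which is proved by splitting into $|z|\le\sqrt{M}$ (there the right side is $\ge2^{-\gamma}$ while the left side is $\le\int_{\R}e^{-|z-y|/D}dy=2D$) and $|z|>\sqrt{M}$ (split the $y$-integral at $|y|=|z|/2$: on $|y|\le|z|/2$, $e^{-|z-y|/D}\le e^{-|z|/(2D)}$ dominates the linearly growing volume since $r\mapsto r^{2\gamma+1}e^{-r/(2D)}$ is bounded for $r\ge1$; on $|y|>|z|/2$, $(1+|y|^{2}/M)^{-\gamma}\le4^{\gamma}(1+|z|^{2}/M)^{-\gamma}$). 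Applying it with $z=x-\lambda s$, $M=1+s$ to the inner $y$-integral in $J^{\beta,\gamma}$ reduces the first inequality to the one-dimensional estimate $\int_{0}^{t}e^{-(t-s)/D}(1+s)^{-\beta}B_{\gamma}(s,x-\lambda s)\,ds\le C(1+t)^{-\beta}B_{\gamma}(t,\chi)$.

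To prove the latter I substitute $\sigma=t-s$ and split $\sigma\in[0,t/2]$ and $\sigma\in[t/2,t]$. On $[0,t/2]$, $\tfrac12(1+t)\le1+t-\sigma\le1+t$, so $(1+t-\sigma)^{-\beta}\le2^{\beta}(1+t)^{-\beta}$ and $B_{\gamma}(t-\sigma,\chi+\lambda\sigma)\le(1+\tfrac{|\chi+\lambda\sigma|^{2}}{1+t})^{-\gamma}$, and it remains to bound $\int_{0}^{t/2}e^{-\sigma/D}(1+\tfrac{|\chi+\lambda\sigma|^{2}}{1+t})^{-\gamma}d\sigma$ by $CB_{\gamma}(t,\chi)$: if $|\chi|\le\sqrt{1+t}$ the right side is $\ge2^{-\gamma}$ and the integral is $\le D$; if $|\chi|>\sqrt{1+t}$ I sub-split at $\sigma=|\chi|/(2|\lambda|)$ (for $\lambda\ne0$), where $\sigma\le|\chi|/(2|\lambda|)$ forces $|\chi+\lambda\sigma|\ge|\chi|/2$, hence the profile factor is $\le4^{\gamma}B_{\gamma}(t,\chi)$, while $\sigma>|\chi|/(2|\lambda|)$ forces $e^{-\sigma/D}\le e^{-|\chi|/(2|\lambda|D)}$, which is super-polynomially small in $1+t$ and hence absorbable since there $B_{\gamma}(t,\chi)\ge(1+\lambda^{2})^{-\gamma}(1+t)^{-\gamma}$. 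On $[t/2,t]$ one uses $e^{-\sigma/D}\le e^{-t/(2D)}\le C_{N}(1+t)^{-N}$ for $N$ large, which outweighs both $(1+t)^{-\beta}$ and the volume $\int_{t/2}^{t}d\sigma\le t$, and which already produces $B_{\gamma}(t,\chi)$ when $|\chi|\le1+t$ (since $(1+t)^{-\gamma}\le2^{\gamma}B_{\gamma}(t,\chi)$ there); for $|\chi|>1+t$ one instead retains the profile factor and uses $|\chi+\lambda\sigma|\ge|\chi|-|\lambda|t$, which is $\ge|\chi|/2$ if $|\chi|\ge2|\lambda|t$ and otherwise again leaves $B_{\gamma}(t,\chi)\ge(1+\lambda^{2})^{-\gamma}(1+t)^{-\gamma}$.

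The main obstacle --- and the reason for these case distinctions --- is the far-field regime $|x-\lambda t|\gg\sqrt{1+t}$, where $B_{\gamma}(t,\chi)$ is only polynomially small: there the kernel's temporal decay $e^{-(t-s)/D}$, though super-polynomially small in $t$, carries no spatial gain by itself, so one must exploit either that $x-\lambda s$ stays comparably far from the front over the relevant range of $s$, or that $t-s$ is so large that $e^{-(t-s)/D}$ itself furnishes the missing spatial decay. Finally, the second inequality follows from the first together with the elementary lower bound $I^{\alpha,\beta,\gamma}(t,x;0,t;\lambda,\lambda,C_{0})\ge c\min\{t,1\}(1+t)^{-\beta}B_{\gamma}(t,x-\lambda t)$, valid for every $\alpha\ge0$, obtained by restricting the defining integral of $I$ to $s\in[\max(t/2,t-1),t]$ and to the Gaussian core $|x-y-\lambda(t-s)|\le\sqrt{1+t-s}$, on which $|t-s|\le1$, $1+s\ge\tfrac12(1+t)$ and $y-\lambda s$ differs from $x-\lambda t$ by an amount of order one, so that $B_{\gamma}(y-\lambda s,s)\ge cB_{\gamma}(t,x-\lambda t)$; since the first inequality supplies the matching upper bound $J^{\beta,\gamma}(t,x;0,t;\lambda,D)\le C\min\{t,1\}(1+t)^{-\beta}B_{\gamma}(t,x-\lambda t)$ (for $t\le1$ one has $J^{\beta,\gamma}\le C\int_{0}^{t}B_{\gamma}(s,x-\lambda s)\,ds\le CtB_{\gamma}(t,x-\lambda t)$), the desired bound $J^{\beta,\gamma}\le CI^{\alpha,\beta,\gamma}$ follows.
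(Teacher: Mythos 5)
Your proof is correct. The paper defers this lemma to \cite{LI-6}, and the proof sketch for the first inequality that survives in the source (as commented-out text) is organized differently: it splits first into the cases $|x-\lambda t|\le\sqrt{1+t}$ and $|x-\lambda t|>\sqrt{1+t}$, and in the far-field case further splits the $y$-integral at $|y-\lambda s|=|x-\lambda t|/2$, extracting an exponential factor $e^{-c|x-\lambda t|}$ from the kernel on the inner piece via the weighted decomposition $|x-y|+(t-s)\ge\frac{E-1}{E}|x-y|+\frac1E\big(|x-\lambda t|-|y-\lambda s|\big)+\frac{E-|\lambda|}{E}(t-s)$, $E>\max(1,|\lambda|)$. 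You instead integrate out $y$ against the Laplace kernel first, reducing everything to the one-dimensional time integral of $e^{-(t-s)/D}(1+s)^{-\beta}B_\gamma(s,x-\lambda s)$, which you then split at $t/2$; this is a more modular packaging of the same mechanism (exponential tails beating polynomial growth). For \eqref{CJ1j1} the paper gives no argument at all, so your route via a matching lower bound on $I^{\alpha,\beta,\gamma}(t,x;0,t;\lambda,\lambda,C_0)$ — restricting to $s\in[\max(t/2,t-1),t]$ and $y$ in the Gaussian core — is a genuine, self-contained contribution, and it works: on that set the integrand is $\ge c(1+t)^{-\beta}B_\gamma(t,x-\lambda t)$ over a region of measure $\gtrsim\min(t,1)$, and the matching upper bound on $J$ (plus the easy $t\le1$ case) closes the comparison. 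Two small points worth flagging explicitly in a write-up: the parameter $\mu$ in \eqref{CJ1j1} must be read as $\mu=\lambda$, as you assume — otherwise the inequality already fails at $x=\lambda t$ for $\alpha>\beta$ and $t$ large, since then $I\lesssim(1+t)^{-\alpha}$ while $J\gtrsim(1+t)^{-\beta}$; and the intermediate bound $B_\gamma(t,\chi)\ge(1+\lambda^2)^{-\gamma}(1+t)^{-\gamma}$ you invoke holds only under the constraint $|\chi|\lesssim|\lambda|t$ furnished by the non-emptiness of the relevant $\sigma$-range, which should be stated since the bound is false for general $\chi$.
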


\begin{lem}\label{mvpbpw11}
For any given $\beta\geq0$, there exists  constant $C>0$ such that
\bq\label{C4}
\|S^tg_0(x)\|_{L^\infty_{v,\beta}}\leq Ce^{-\frac{2\nu_0t}{3}}\max_{y\in\mathbb{R}}e^{-\frac{\nu_0|x-y|}{3}}\|g_0(y)\|_{L^\infty_{v,\beta}},
\eq
where $S^t$ and $\nu_0$ are defined by \eqref{St} and \eqref{nuv}. In particular, if $g_0(x,v)$ satisfies
$$
\|g_0(x)\|_{L^\infty_{v,\beta}}\leq C(1+|x|^2)^{-\gamma},\quad \gamma>0,
$$
then
\bq\label{Cb41}
\|W_{k}(t)\ast g_0(x)\|_{L^\infty_{v,\beta}}\leq Ce^{-\frac{\nu_0t}{2}}(1+|x|^2)^{-\gamma},
\eq
where $W_k(t,x),\ k\geq0$ is defined by \eqref{28}.
\end{lem}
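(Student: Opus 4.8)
The plan is to first establish the pointwise decay estimate \eqref{C4} for the free streaming semigroup $S^t$ and then bootstrap it along the iteration defining $W_k$. Recall from \eqref{St} that $S^tg_0(x,v)=e^{-\nu(v)t}g_0(x-v_1t,v)$, so that
$$
\|S^tg_0(x,v)\|_{L^\infty_{v,\beta}}
=\sup_{v}w(v)^{\beta}e^{-\nu(v)t}|g_0(x-v_1t,v)|
\le \sup_{v}w(v)^{\beta}e^{-\nu(v)t}\|g_0(x-v_1t)\|_{L^\infty_{v,\beta}}.
$$
The idea is to split $e^{-\nu(v)t}=e^{-\frac{2\nu_0}{3}t}\,e^{-(\nu(v)-\frac{2\nu_0}{3})t}$, use $\nu(v)\ge\nu_0$ from \eqref{nuv} so the second exponential is at most $e^{-\frac{\nu_0}{3}t}$, and absorb any polynomial weight $w(v)^\beta$ against a fraction of that exponential decay (since $\nu(v)\ge\nu_0(1+|v|)$ grows linearly, $w(v)^\beta e^{-\frac{\nu_0}{6}|v|t}$ is bounded for $t$ bounded away from $0$, and for small $t$ one just keeps the full weight on the right-hand side). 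The key geometric point is that, writing $y=x-v_1t$, we have $|v_1|\ge |x-y|/t$, hence $e^{-\frac{\nu_0}{6}\nu(v)t}\le e^{-\frac{\nu_0}{6}|v_1|t}\le e^{-\frac{\nu_0}{6}|x-y|}$ modulo constants; choosing the constants carefully yields the claimed factor $e^{-\frac{2\nu_0 t}{3}}\max_{y}e^{-\frac{\nu_0}{3}|x-y|}$. This proves \eqref{C4}.

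Next I would prove \eqref{Cb41}. By definition \eqref{28}, $W_k(t,x)=\sum_{i=0}^{3k}J_i(t,x)$, and by the recursion in Theorem \ref{mvpbth1}, $J_0(t,x)=S^t\delta(x)I_v$ and $J_i(t,x)=\int_0^t S^{t-s}(K+v_1\dx(I-\partial_{xx})^{-1}P^1_0)J_{i-1}\,ds$. The strategy is an induction on $i$: the statement I propagate is that for $g_0$ with $\|g_0(x)\|_{L^\infty_{v,\beta}}\le C(1+|x|^2)^{-\gamma}$ one has $\|J_i(t)\ast g_0(x)\|_{L^\infty_{v,\beta}}\le C_i (1+t)^{i}e^{-\frac{3\nu_0}{4}t}(1+|x|^2)^{-\gamma}$ (any exponent strictly between $\frac{\nu_0}{2}$ and $\frac{2\nu_0}{3}$ would do, but $\tfrac34\nu_0$ is convenient since summing finitely many such terms gives $e^{-\frac{\nu_0}{2}t}$). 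The base case $i=0$ is exactly \eqref{C4} together with the elementary convolution fact that $\max_y e^{-c|x-y|}(1+|y|^2)^{-\gamma}\le C(1+|x|^2)^{-\gamma}$. For the inductive step I apply \eqref{C4} to the outer $S^{t-s}$, then I need bounds in $L^\infty_{v,\beta}$ for $K$ and for $v_1\dx(I-\partial_{xx})^{-1}P^1_0$ acting on $J_{i-1}\ast g_0$; the operator $K$ is bounded on the weighted $L^\infty_v$ spaces in the standard way (using the kernel bound in \eqref{LKF}, e.g. via the Grad estimate $\int k(v,v_*)w(v_*)^\beta w(v)^{-\beta}dv_*\le C$), while $\dx(I-\partial_{xx})^{-1}$ convolves against the integrable kernel $\pm\tfrac12\mathrm{sgn}(x)e^{-|x|}$ and $P^1_0$ is a bounded projection, so the whole operator preserves the spatial decay $(1+|x|^2)^{-\gamma}$ up to a constant. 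Carrying out the time integral $\int_0^t (1+t-s)^{\beta}e^{-\frac{2\nu_0}{3}(t-s)}(1+s)^{i-1}e^{-\frac{3\nu_0}{4}s}ds$ produces the extra factor $(1+t)^{i}$ against $e^{-\frac{3\nu_0}{4}t}$, closing the induction. Summing over $0\le i\le 3k$ and using $(1+t)^{3k}e^{-\frac{3\nu_0}{4}t}\le C_k e^{-\frac{\nu_0}{2}t}$ gives \eqref{Cb41}.

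The main obstacle I anticipate is the careful handling of the velocity weight $w(v)^\beta$ together with the singular-in-time streaming estimate: one must split the collision frequency's exponential decay into a part that dominates the weight and a part that converts the velocity decay into the spatial factor $e^{-c|x-y|}$, and this requires $t$ not too small; for small $t$ the estimate \eqref{C4} is still fine because the weight stays on the right-hand side, but in the iteration one must make sure the constants $C_i$ do not blow up as the time integral accumulates the regions $s\approx 0$ and $s\approx t$. The bound on $K$ in the weighted sup norm and the observation that $\dx(I-\partial_{xx})^{-1}$ has an $L^1$ convolution kernel are routine but must be invoked explicitly. Apart from these bookkeeping points, the argument is a direct induction, mirroring the treatment of the kinetic waves in \cite{LIU-2,LI-5}.
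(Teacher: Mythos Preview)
Your approach is essentially the paper's: cite/derive \eqref{C4} via $\nu(v)\ge\nu_0(1+|v_1|)$ and $|v_1|t=|x-y|$, then induct on $i$ to get $\|J_i(t)\ast g_0(x)\|_{L^\infty_{v,\beta}}\le C_it^ie^{-ct}(1+|x|^2)^{-\gamma}$, using that $K$ is bounded on weighted $L^\infty_v$ and that $\dx(I-\partial_{xx})^{-1}$ is convolution against $\pm\tfrac12\mathrm{sgn}(x)e^{-|x|}\in L^1$. Two small corrections: first, in \eqref{C4} the weight $w(v)^\beta$ simply cancels (since $w(v)^\beta|g_0(x-v_1t,v)|\le\|g_0(x-v_1t)\|_{L^\infty_{v,\beta}}$), so your paragraph on absorbing it against $e^{-\frac{\nu_0}{6}|v|t}$ is unnecessary; second, your inductive rate $\tfrac{3\nu_0}{4}$ lies \emph{above} $\tfrac{2\nu_0}{3}$, so after one application of \eqref{C4} you only recover $e^{-\frac{2\nu_0}{3}t}$ and cannot close at $\tfrac{3}{4}$---just run the induction with $e^{-\frac{2\nu_0}{3}t}$ throughout (as the paper does), giving $\int_0^te^{-\frac{2\nu_0}{3}(t-s)}s^{i-1}e^{-\frac{2\nu_0}{3}s}ds=\tfrac{t^i}{i}e^{-\frac{2\nu_0}{3}t}$ and then $t^{3k}e^{-\frac{2\nu_0}{3}t}\le C_ke^{-\frac{\nu_0}{2}t}$.
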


\begin{proof}
\eqref{C4} was proved in Lemma 4.2 of \cite{LI-5}.
From \eqref{28}, we have
$$
W_k(t)\ast g_0(x)=\sum^{3k}_{i=0}J_{i}(t,x),
$$
where
\bq
\begin{split}
&J_0(t,x)=S^tg_0(x,v)=e^{-\nu(v)t}g_0(x-vt,v),\\
&J_k(t,x)=\int^t_0S^{t-s}(K+v_1\dx (I-\partial_{xx})^{-1}P^1_0)J_{k-1}ds,\quad k\geq1.
\end{split}
\eq

By \eqref{C4}, we have
\bq
\|J_0(t,x)\|_{L^{\infty}_{v,\beta}}\leq Ce^{-\frac{2\nu_0t}{3}}(1+|x|^2)^{-\gamma}.\label{C4j1}
\eq

Noting that the Green's function of $(I-\partial_{xx})^{-1}$ is $\frac{1}{2}e^{-|x|}$, we obtain
\bma
&\quad |\dx (I-\partial_{xx})^{-1}(J_0(t,x),\chi_0)| \nnm\\
&\leq Ce^{-\frac{2\nu_0t}{3}}\int_{\mathbb{R}}e^{-|y|}(1+|x-y|^2)^{-\gamma}dy \nnm\\
&\leq Ce^{-\frac{2\nu_0t}{3}}(1+|x|^2)^{-\gamma}. \label{C4j2}
\ema

Moreover, it follows from \eqref{C4}, \eqref{C4j1} and \eqref{C4j2} that
\bmas
\|J_1(t,x)\|_{L^{\infty}_{v,\beta}}&=\bigg\|\int_{0}^{t}S^{t-s}(KJ_0+v_1\dx (I-\partial_{xx})^{-1}P^1_0J_0)ds\bigg\|_{L^{\infty}_{v,\beta}}\\
&\leq C(1+|x|^2)^{-\gamma}\int_0^te^{-\frac{2\nu_0(t-s)}{3}}e^{-\frac{2\nu_0s}{3}} ds\\
&\leq Cte^{-\frac{2\nu_0t}{3}}(1+|x|^2)^{-\gamma}.
\emas

By a similar argument as  above, we can obtain
$$
\|J_k(t,x)\|_{L^{\infty}_{v,\beta}}\leq Ct^ke^{-\frac{2\nu_0t}{3}}(1+|x|^2)^{-\gamma},\quad \forall k\ge 1,
$$
which proves \eqref{Cb41}. The proof of the lemma is completed.
\end{proof}

%%%%%%%%%%%%%%%%%%%%%%%%%%%%%%%%%%%%%%%%%%%%%%%%%%%%%%%%%%引理4.8%%%%%%%%%%%%%%%%%%%%%%%%%%%%%%%%%%%%%%%%%%%%%%%%%%%%%%%%%%%%%%%%%%%%%%%%%%%%%%%%%%%%%%%5
%With the help of Lemma \ref{mvpbpw11}, we have the Singular kinetic wave interaction as follows.

\begin{lem}\label{mvpbpw12}
For given $\alpha,\gamma\geq0$ and $\lambda\in\R$, if the function $F(t,x,v)$ satisfies
$$
\|F(t,x,v)\|_{L^{\infty}_{v,\beta-1}}\leq C(1+t)^{-\alpha}B_{\gamma}(t,x-\lambda t),
$$
then we have
\bma
&\bigg\|\int^t_0S^{t-s}F(s,x)ds\bigg\|_{L^{\infty}_{v,\beta}}\leq C(1+t)^{-\alpha}B_{\gamma}(t,x-\lambda t),\label{CbS1}\\
&\bigg\|\int^t_0W_{k}(t-s)\ast F(s,x)ds\bigg\|_{L^{\infty}_{v,\beta}}\leq C(1+t)^{-\alpha}B_{\gamma}(t,x-\lambda t),\label{CbS2}
\ema
where $W_k(t,x),\ k\geq0$ is defined by \eqref{28}.
\end{lem}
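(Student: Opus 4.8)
The plan is to establish \eqref{CbS1} by a direct computation and then deduce \eqref{CbS2} from it by unwinding the Duhamel iteration that defines $W_k$. The structural feature to keep in mind throughout is the gain of one power of the velocity weight between the hypothesis ($F$ bounded in $L^\infty_{v,\beta-1}$, i.e. $\|g\|_{L^\infty_{v,\beta-1}}=\|w^{\beta-1}g\|_{L^\infty_v}$) and the conclusion (in $L^\infty_{v,\beta}$): this is produced by the damping factor $e^{-\nu(v)(t-s)}$ carried by $S^{t-s}$, since $\nu(v)\ge\nu_0(1+|v|)$ and $\int_0^t(1+|v|)e^{-\nu_0(1+|v|)(t-s)}\,ds\le\nu_0^{-1}$; in other words $(1+|v|)e^{-\nu_0(1+|v|)(t-s)}\,ds$ is a measure of mass $\le\nu_0^{-1}$ that concentrates near $s=t$ at scale $1/(1+|v|)$, and this compensates exactly the missing weight in $F$.

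For \eqref{CbS1}, I would write $S^{t-s}F(s,\cdot)(x)(v)=e^{-\nu(v)(t-s)}F\big(s,x-v_1(t-s),v\big)$, insert the hypothesis together with $w(v)\le 1+|v|$ and $\nu(v)\ge\nu_0(1+|v|)$, and split the exponent as
\[
\nu_0(1+|v|)(t-s)\ \ge\ \tfrac{\nu_0}{3}(t-s)+\tfrac{\nu_0}{3}|v_1|(t-s)+\tfrac{\nu_0}{3}(1+|v|)(t-s),
\]
valid since $|v_1|\le 1+2|v|$. The middle slice absorbs the transport shift in the diffusion profile through the elementary inequality
\[
e^{-\frac{\nu_0}{3}|v_1|(t-s)}\,B_\gamma\big(s,x-v_1(t-s)-\lambda s\big)\ \le\ C_\gamma\,B_\gamma(s,x-\lambda s),
\]
checked by distinguishing $|v_1|(t-s)\le|x-\lambda s|/2$ (the two profiles are comparable) from $|v_1|(t-s)>|x-\lambda s|/2$ (the exponential dominates the polynomial). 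This reduces the $v$-integrand to $C(1+|v|)e^{-\frac{\nu_0}{3}(1+|v|)(t-s)}\phi(s)$ with $\phi(s):=e^{-\frac{\nu_0}{3}(t-s)}(1+s)^{-\alpha}B_\gamma(s,x-\lambda s)$, and $\phi$ varies by at most a fixed multiplicative constant over unit $s$-intervals since $\big|\tfrac{d}{ds}\log B_\gamma(s,x-\lambda s)\big|\le C(\gamma,|\lambda|)$ uniformly in $x$ and $(1+s)^{-\alpha}$ is slowly varying. Splitting $\int_0^t\,ds$ at $s=(t-1)_+$: on $(t-1)_+<s<t$ I replace $\phi(s)$ by $C\phi(t)$ and use $\int(1+|v|)e^{-\frac{\nu_0}{3}(1+|v|)(t-s)}\,ds\le 3\nu_0^{-1}$, obtaining $\le C\phi(t)=C(1+t)^{-\alpha}B_\gamma(t,x-\lambda t)$; on $0<s<t-1$ I use $t-s\ge1$ to bound $\sup_v(1+|v|)e^{-\frac{\nu_0}{6}(1+|v|)(t-s)}\le C$ uniformly, leaving $C\int_0^t e^{-\frac{\nu_0}{6}(t-s)}(1+s)^{-\alpha}B_\gamma(s,x-\lambda s)\,ds$, which is $\le C\,J^{\alpha,\gamma}(t,x;0,t;\lambda,D)$ for $D\ge6\nu_0^{-1}$ (because $B_\gamma(s,x-\lambda s)\le C\int_\R e^{-|x-y|/D}B_\gamma(s,y-\lambda s)\,dy$) and hence $\le C(1+t)^{-\alpha}B_\gamma(t,x-\lambda t)$ by Lemma \ref{mvpbpw10}. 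Taking the supremum over $v$ yields \eqref{CbS1}.

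For \eqref{CbS2}, recall $W_k(t,x)=\sum_{i=0}^{3k}J_i(t,x)$, where $J_0(t)=S^t$ (with kernel $J_0(t,x)=S^t\delta(x)I_v$) and $J_i(t)=\int_0^tS^{t-\sigma}\mathcal K J_{i-1}(\sigma)\,d\sigma$ with $\mathcal K:=K+v_1\dx(I-\partial_{xx})^{-1}P^1_0$. Setting $\Psi_i(t):=\int_0^tJ_i(t-s)\ast F(s,\cdot)\,ds$, the substitution $\rho=s+\sigma$ followed by Fubini ($\int_0^t\!ds\int_s^t\!d\rho=\int_0^t\!d\rho\int_0^\rho\!ds$) gives the Duhamel identity $\Psi_i(t)=\int_0^t S^{t-\rho}\mathcal K\Psi_{i-1}(\rho)\,d\rho$ for $i\ge1$, while $\Psi_0(t)=\int_0^tS^{t-s}F(s,\cdot)\,ds$. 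I would then prove by induction on $i$ that $\|\Psi_i(t)(x)\|_{L^\infty_{v,\beta}}\le C_i(1+t)^{-\alpha}B_\gamma(t,x-\lambda t)$: the base case $i=0$ is \eqref{CbS1}, and for the step it suffices to verify $\|\mathcal K\Psi_{i-1}(\rho)(x)\|_{L^\infty_{v,\beta-1}}\le C(1+\rho)^{-\alpha}B_\gamma(\rho,x-\lambda\rho)$ and apply \eqref{CbS1} with $F$ replaced by $\mathcal K\Psi_{i-1}$. For the $K$-term this uses the standard kernel estimate $\int|k(v,v_*)|w(v_*)^{-\beta}\,dv_*\le Cw(v)^{-\beta}$; for the other term $v_1\dx(I-\partial_{xx})^{-1}P^1_0g(x)=v_1\sqrt{M}(v)\,\dx(I-\partial_{xx})^{-1}(g(\cdot),\chi_0)(x)$, with $(1+|v|)^{\beta-1}|v_1|\sqrt{M}(v)\le C$ and the $x$-factor bounded by $C\int_\R e^{-|x-y|}|(g(y),\chi_0)|\,dy\le C(1+\rho)^{-\alpha}B_\gamma(\rho,x-\lambda\rho)$ via the kernel $\tfrac12 e^{-|x|}$ of $(I-\partial_{xx})^{-1}$ and a convolution estimate in the spirit of Lemma \ref{mvpbpw8}. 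Summing over $i=0,\dots,3k$ gives \eqref{CbS2}.

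I expect the main obstacle to be the handling of $(1+|v|)e^{-\nu_0(1+|v|)(t-s)}$ in \eqref{CbS1}: one cannot estimate it pointwise in $s$ (its supremum over $v$ behaves like $(t-s)^{-1}$, which is not integrable near $s=t$), so one must treat it as a finite-mass measure and exploit both its concentration near $s=t$ and the slow variation of the target profile; the companion difficulty is transferring the transport shift $v_1(t-s)$ through $B_\gamma$ using only a fraction of the exponential. Once \eqref{CbS1} is in place, the passage to \eqref{CbS2} through the Duhamel identity for $W_k$ and the mapping properties of $\mathcal K$ is routine.
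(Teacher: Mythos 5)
Your proposal is correct, and both of your sub-arguments reach the paper's conclusions, but the organization differs from the paper's in ways worth noting. For \eqref{CbS1} the paper does the same basic thing you do (write out $S^{t-s}F$ explicitly, gain one velocity weight from $\nu(v)e^{-c\nu(v)(t-s)}$, absorb the transport shift $v_1(t-s)$ via a fraction of the exponential), but it packages steps (b) and (c) of your argument into a single $s$-- and $y$--uniform pointwise inequality
\[
e^{-\frac{\nu_0(t-s)}{3}}e^{-\frac{\nu_0|x-y|}{3}}B_\gamma(s,y-\lambda s)\ \le\ e^{-\frac{|x-\lambda t|}{6D}}+C\,B_\gamma(t,x-\lambda t),\qquad D=\nu_0^{-1}\max\{1,|\lambda|\},
\]
after which the $s$--integral $\int_0^t\nu(v)e^{-\nu(v)(t-s)/3}(1+s)^{-\alpha}\,ds\le C(1+t)^{-\alpha}$ is decoupled and immediate; this makes your split at $s=(t-1)_+$, the slow-variation observation, and the detour through $J^{\alpha,\gamma}$ and Lemma~\ref{mvpbpw10} unnecessary. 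Your treatment is somewhat longer but makes fully explicit the point you correctly flag as the main obstacle --- that $\sup_v\nu(v)e^{-\nu(v)(t-s)}$ behaves like $(t-s)^{-1}$ and must be handled as a finite measure, not pointwise. For \eqref{CbS2} the paper offers no argument at all (``by a similar argument''), whereas your Duhamel identity $\Psi_i(t)=\int_0^t S^{t-\rho}\mathcal K\,\Psi_{i-1}(\rho)\,d\rho$, obtained by the change of variables $\rho=s+\sigma$ and Fubini, plus an induction on $i$ using the mapping properties of $K$ (one weight gained) and of $v_1\dx(I-\pt_{xx})^{-1}P^1_0$ (exponentially localized $x$--kernel together with the harmless $v_1\sqrt{M}$ factor) is a clean and complete route. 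The only implicit hypothesis you use is $\beta\ge1$, needed for the kernel bound on $K$; this is how the lemma is applied in the paper, so it is harmless.
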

\begin{proof}
By Lemma \ref{mvpbpw11}, we have
\bmas
&\quad\nu(v)^{\beta}\int^t_0|S^{t-s}F(s,x,v)|ds\\
&\leq C\int^t_0e^{-\frac{2\nu(v)(t-s)}{3}}\nu(v)\max_{y\in\mathbb{R}}e^{-\frac{\nu_0|x-y|}{3}}\|F(s,y)\|_{L^{\infty}_{v,\beta-1}}ds\\
&\leq C\int^t_0e^{-\frac{2\nu(v)(t-s)}{3}}\nu(v)(1+s)^{-\alpha}\max_{y\in\mathbb{R}}e^{-\frac{\nu_0|x-y|}{3}}B_{\gamma}(s,y-\lambda s)ds.
\emas

Since
\bmas
 e^{-\frac{\nu_0(t-s)}3}e^{-\frac{\nu_0|x-y|}{3}}B_{\gamma}(s,y-\lambda s)&\le e^{-\frac{|x-y-\lambda(t- s)|}{3D}}B_{\gamma}(s,y-\lambda s)\\
 &\le e^{-\frac{|x- \lambda t|}{6D}}+CB_{\gamma}(t,x-\lambda t),
 \emas
with $D=\nu_0^{-1}\max\{1,|\lambda|\}$, we have
\bmas
&\quad\nu(v)^{\beta}\intt | S^{t-s}F_1(s,x,v)|ds\nnm\\
&\le C\intt e^{-\frac{ \nu(v)(t-s)}3}\nu(v)(1+s)^{-\frac{\alpha}2}ds \(e^{-\frac{|x-\lambda t|}{6D}}+B_{\gamma}(t,x-\lambda t)\) \\
&\le C(1+t)^{-\frac{\alpha}2} \(e^{-\frac{|x-\lambda t|}{6D}}+B_{\gamma}(t,x-\lambda t)\),
\emas
which implies \eqref{CbS1}. By a similar argument as  above, we can obtain \eqref{CbS2}.
\end{proof}

%%%%%%%%%%%%%%%%%%%%%%%%%%%%%%%%%%%%%%%%%%%%%%%%%%%%%%%%%%引理4.9%%%%%%%%%%%%%%%%%%%%%%%%%%%%%%%%%%%%%%%%%%%%%%%%%%%%%%%%%%%%%%%%%%%%%%%%%%%

%%%%%%%%%%%%%%%%%%%%%%%%%%%%%%%%%%%%%%%%%%%%%%%%%%%%%%%%%%4.2jie%%%%%%%%%%%%%%%%%%%%%%%%%%%%%%%%%%%%%%%%%%%%%%%%%%%%%%%%%%%%%%%%%%%%%%%%%%%%%%%%%%

%%%%%%%%%%%%%%%%%%%%%%%%%%%%%%%%%%%%%%%%%%%%%%%%%%%%%%%%%%%%%%%%%%%%%%%%%%%%%%%%%%%%%%%%%%%%%%%%%%%%%%%%%%%%%%%%%%%%%%%%%%%%%%%%%%%%%%%%%
%%%%%%%%%%%%%%%%%%%%%%%%%%%%%%%%%%%%%%%%%%%%%%%%%%%%%%%%%%%%%%%%定理1.2证明%%%%%%%%%%%%%%%%%%%%%%%%%%%%%%%%%%%%%%%%%%%%%%%%%%%%%%%%%%%%%%
%%%%%%%%%%%%%%%%%%%%%%%%%%%%%%%%%%%%%%%%%%%%%%%%%%%%%%%%%%%%%%%%%%%%%%%%%%%%%%%%%%%%%%%%%%%%%%%%%%%%%%%%%%%%%%%%%%%%%%%%%%%%%%%%%%%%%%%%%

With the help of Theorem \ref{mvpbth1} and Lemma \ref{mvpbpw5}--\ref{mvpbpw12}, we can perform the following proof for Theorem \ref{mvpbth2}.

\medskip

\noindent\emph{Proof of Theorem 1.2.} Let $f$ be a solution to the IVP problem \eqref{mVPB8}--\eqref{mVPB10} for $t>0$. According to the Duhamel's principle, the solution can be expressed as
\bq\label{H0}
f(t,x)=G(t)\ast f_0+\int^t_0G(t-s)\ast H(s)ds+\int^t_0G(t-s)\ast \Gamma(f,f)ds,
\eq
where $H$ is the nonlinear term containing the electric potential $\Phi(t,x)$  defined by
\be\label{H01}
H =\frac{1}{2}(v_1\dx \Phi)f-\dx \Phi\partial_{v_1}f+v_1\sqrt{M}\dx (I-\partial_{xx})^{-1}(e^{-\Phi}+\Phi-1).
\ee

Define
\bq
\begin{split}
Q(t)=\sup_{x\in\mathbb{R},0\leq s\leq t}\bigg\{%&\bigg(\sum^3_{k=1}\|P_{0k}f\|+|\Phi| \bigg)\bigg((1+s)^{-\frac{1}{2}}\sum^{1}_{i=-1}B_{\frac{1}{2}}(s,x-\beta_is)\bigg)^{-1}\\
&(\|f\|_{L^{\infty}_{v,3}}+\|\partial_{v_1}f\|_{L^{\infty}_{v,2}}+|\Phi|)\bigg((1+s)^{-\frac{1}{2}}\sum^{1}_{i=-1}B_{\frac{1}{2}}(s,x-\beta_is)\bigg)^{-1}\\
&+(\|\dx f\|_{L^{\infty}_{v,3}})\bigg((1+s)^{-1}\sum^{1}_{i=-1}B_{\frac{1}{2}}(s,x-\beta_is)\bigg)^{-1}\\
&+(|\dx \Phi|+|\partial_t\Phi|)\bigg((1+s)^{-1}\sum^{1}_{i=-1}B_{\frac{1}{2}}(s,x-\beta_is)\bigg)^{-1}\\
& +(1+s)^{\frac{3}{4}}\sqrt{H_{4,3}(f)}(s)\bigg\}.\nonumber
\end{split}
\eq

%%%%%%%%%%%%%%%%%%%%%%%%%%%%%%%%%%%%%%%%%%%%%%%%%%%%%%%%%%%%%%%%%%%%%%%%%%%%%%%%%%%%%%%%%%%%%%%%%%%%%%%%%%%%%%%%%%%%%%%%%%%%%%%%%%%%%%%%
From \cite{LI-5}, it holds that for any $\beta\in \N^3$ and $\gamma\ge 0$,
\bq\label{gamma1}
\|\partial_v^{\beta}\Gamma(f,g)\|_{L^{\infty}_{v,\gamma-1}}\leq C\sum_{|\beta_1|+|\beta_2|\leq|\beta|}\|\partial_v^{\beta_1}f\|_{L^{\infty}_{v,\gamma}}\|\partial_v^{\beta_2}g\|_{L^{\infty}_{v,\gamma}}.
\eq
%%%%%%%%%%%%%%%%%%%%%%%%%%%%%%%%%%%%%%%%%%%%%%%%%%%%%%%%%%%%%%%%%%%%%%%%%%%%%%%%%%%%%%%%%%%%%%%%%%%%%%%%%%%%%%%%%%%%%%%%%%%%%%%%%%%%%%%%%%%%%%%%%%

By \eqref{gamma1}, we obtain that  for $0\le s\le t$ and $\alpha=0,1,$
\bma
\|\dxa \Gamma(f,f)(s,x)\|_{L^{\infty}_{v,2}}&\leq C\sum_{\alpha'\leq\alpha}\|\dx^{\alpha'}f(s,x)\|_{L^{\infty}_{v,3}}\|\dx^{\alpha-\alpha'}f(s,x)\|_{L^{\infty}_{v,3}} \nnm\\
&\leq CQ^2(t)(1+s)^{-\frac{2+\alpha}{2}}\sum^{1}_{i=-1}B_{1}(s,x-\beta_is) . \label{H2}
\ema

Moreover, noting that for $0\le s\le t$ and $\alpha=0,1,$
\bma
 &|\dxa\dx (I-\partial_{xx})^{-1}(e^{-\Phi}+\Phi-1)|=|\dxa (I-\partial_{xx})^{-1}(\dx \Phi(e^{-\Phi}-1))|\nnm\\
 &\leq C|\dxa e^{-|x|}|\ast(e^{\|\Phi\|_{L^{\infty}_x}}|\dx \Phi| |\Phi|)\leq CQ^2(t)(1+s)^{-\frac{3}{2}}\sum^{1}_{i=-1}B_{1}(s,x-\beta_is),\label{ph2}
\ema
we have
\bma
%\|H(s,x)\|_{L^{\infty}_{v,2}}&\leq C|\dx \Phi| \|f\|_{L^{\infty}_{v,3}}+C|\dx \Phi| \|\partial_{v_1}f\|_{L^{\infty}_{v,2}}+C|(I-\partial_{xx})^{-1}(\dx \Phi(e^{-\Phi}-1))|\nnm\\
%&\leq C|\dx \Phi| \|f\|_{L^{\infty}_{v,3}}+C|\dx \Phi| \|\partial_{v_1}f\|_{L^{\infty}_{v,2}}+C e^{-|x|} \ast(e^{\|\Phi\|_{L^{\infty}_x}}|\dx \Phi| |\Phi|)\nnm\\
%&\leq CQ^2(t)(1+s)^{-\frac{3}{2}}\sum^{1}_{i=-1}B_{1}(s,x-\beta_is),\label{MH0}\\
\|\dxa H(s,x)\|_{L^{\infty}_{v,2}}&\leq C\sum_{\alpha'\le \alpha}(|\dx^{\alpha'}\dx \Phi| \|\dx^{\alpha-\alpha'}f\|_{L^{\infty}_{v,3}}+|\dx^{\alpha'}\dx \Phi| \|\dx^{\alpha-\alpha'}\partial_{v_1}f\|_{L^{\infty}_{v,2}})\nnm\\
&\quad+C|\dxa e^{-|x|}|\ast(e^{\|\Phi\|_{L^{\infty}_x}}|\dx \Phi| |\Phi|)\nnm\\
&\leq CQ^2(t)(1+s)^{-\frac{3}{2}-\frac{5\alpha}{12}}\sum^{1}_{i=-1}B_{1-\frac{\alpha}{6}}(s,x-\beta_is), \label{MH1}
\ema
where we have used (Gagliardo-Nirenberg interpolation inequality)
\bma
 \|\dx \partial_{v_1}f(s,x)\|_{L^{\infty}_{v,2}}&\leq C\|\dx \Tdv^3(w^2f)\|^{\frac{1}{3}}_{L^2_v}\|\dx (w^2f)\|^{\frac{2}{3}}_{L^{\infty}_v}+C\|\dx (wf)\|_{L^{\infty}_v}\nnm\\
& \leq CQ(t)(1+s)^{-\frac{11}{12}}\sum^3_{i=-1}B_{\frac{1}{3}}(s,x-\beta_is), \\
 |\partial_{xx}\Phi(s,x)|&=|\dx (I-\partial_{xx})^{-1}\dx n+\dx (I-\partial_{xx})^{-1}(\dx \Phi(e^{-\Phi}-1))|\nnm\\
& \leq |\dx e^{-|x|}|\ast|\dx n|+|\dx e^{-|x|}|\ast(e^{\|\Phi\|_{L_x^{\infty}}}|\dx \Phi| |\Phi|)\nnm\\
& \leq CQ(t)(1+s)^{-1}\sum^{1}_{i=-1}B_{\frac{1}{2}}(s,x-\beta_is).
\ema

First, we show the pointwise estimate of $\dxa f$. By \eqref{H0}, we have
\bma
\dxa f&=\dxa G(t)\ast f_0+\int^t_0\dxa G(t-s)\ast H(s)ds+\int^t_0\dxa G(t-s)\ast \Gamma(f,f)ds\nnm\\
&=:I_1+I_2+I_3, \quad \alpha=0,1.
\ema

We estimate $I_j$, $j=1,2,3$ as follows. By Theorem \ref{mvpbth1},  $I_1$ can be decomposed into
\bma
I_1&=\dxa G_1(t)\ast f_0+G_2(t)\ast \dxa f_0+W_2(t)\ast \dxa f_0\nnm\\
&=:I_1^1+I_1^2+I_1^3\nnm.
\ema

For $I_1^1$, it follows from Theorem \ref{mvpbth1} and Lemma \ref{mvpbpw8} that
\bma
\|I_1^1\|&\leq C\delta_0\sum^{1}_{i=-1}\int_{\R}(1+t)^{-\frac{1+\alpha}{2}}e^{-\frac{|x-y-\beta_it|^2}{D(1+t)}}(1+|y|^2)^{-\gamma_0}dy\nnm\\
&\quad+C\delta_0\int_{\R}e^{-\frac{|x-y|+t}{D}}(1+|y|^2)^{-\gamma_0}dy\nnm\\
&\leq C\delta_0(1+t)^{-\frac{1+\alpha}{2}}\sum^{1}_{i=-1}B_{\frac{1}{2}}(t,x-\beta_it).\label{FI1}
\ema

For $I_1^2$ and $I_1^3$, we can obtain by Theorem \ref{mvpbth1} and Lemma \ref{mvpbpw11} that
\be
 \|I_1^2\|+\|I_1^3\|\leq C\(e^{- \frac{t}{D}}+e^{-\frac{\nu_0t}{2}}\)(1+|x|^2)^{-\gamma_0}.\label{FI2}
\ee

Thus, it follows from \eqref{FI1}--\eqref{FI2} that
\be
\|I_1\|\leq C\delta_0(1+t)^{-\frac{1+\alpha}{2}}\sum^{1}_{i=-1}B_{\frac{1}{2}}(t,x-\beta_it).\label{nonlin1}
\ee

By Theorem \ref{mvpbth1}, we decompose $I_3$ into
\bma
I_3&=\int^t_0\dxa G_1(t-s)\ast \Gamma(f,f)ds+\int^t_0G_2(t-s)\ast \dxa \Gamma(f,f)ds\nnm\\
&\quad+\int^t_0W_2(t-s)\ast \dxa \Gamma(f,f)ds\nnm\\
&=:I_3^1+I_3^2+I_3^3.\nnm
\ema

For $I_3^1$,  noting that $P_0\Gamma(f,f)=0$, we can obtain by Theorem \ref{mvpbth1}, Lemmas \ref{mvpbpw9}--\ref{mvpbpw10} and \eqref{H2} that
\bma
\|I_3^1\|&\leq\bigg\|\int^{\frac{t}{2}}_0\dxa G_1(t-s)\ast \Gamma(f,f)ds\bigg\|+\bigg\|\int_{\frac{t}{2}}^tG_1(t-s)\ast \dxa \Gamma(f,f)ds\bigg\|\nnm\\
&\leq CQ^2(t)\sum^{1}_{i,j=-1}I^{\frac{2+\alpha}{2},1,1}\(t,x;0,\frac{t}{2};\beta_i,\beta_j,D\)\nnm\\
&\quad+CQ^2(t)\sum^{1}_{i,j=-1}I^{1,\frac{2+\alpha}{2},1}\(t,x;\frac{t}{2},t;\beta_i,\beta_j,D\)\nnm\\
&\leq CQ^2(t)(1+t)^{-\frac{1+\alpha}{2}}\sum^{1}_{i=-1}B_{\frac{1}{2}}(t,x-\beta_it). \label{I13}
\ema

For $I_3^2$ and $I_3^3$, we have by \eqref{H2}, Lemma \ref{mvpbpw10} and Lemma \ref{mvpbpw12} that
\be
 \|I_3^2\|+\|I_3^3\|\leq CQ^2(t)(1+t)^{-\frac{1+\alpha}{2}}\sum^{1}_{i=-1}B_{\frac{1}{2}}(t,x-\beta_it).\label{H4j}
\ee

Thus, it follows from \eqref{I13}--\eqref{H4j} that
\be
\|I_3\|\leq CQ^2(t)(1+t)^{-\frac{1+\alpha}{2}}\sum^{1}_{i=-1}B_{\frac{1}{2}}(t,x-\beta_it).\label{nonlin3}
\ee

To estimate $I_2$, decompose
\bma
I_2&=\int^t_0\dxa G_1(t-s)\ast P_0H(s)ds+\int^t_0\dxa G_1(t-s)\ast P_1H(s)ds\nnm\\
&\quad+\int^t_0G_2(t-s)\ast \dxa H(s)ds+\int^t_0W_2(t-s)\ast \dxa H(s)ds\nnm\\
&=:I_2^1+I_2^2+I_2^3+I_2^4.\nnm
\ema

For $I^1_2$, we have
\be
I_2^1=\int^{t/2}_0\dxa G_1(t-s)\ast P_0Hds+ \int^t_{t/2}\dxa G_1(t-s)\ast P_0Hds=:I_2^{11}+I_2^{12}. \label{FI40}
\ee

For $I_2^{12}$, we obtain by Theorem \ref{mvpbth1} and \eqref{MH1} that
\bma
\|I_2^{12}\|&\le CQ^2(t)\sum^{1}_{i,j=-1} I^{\frac{1+\alpha}{2},\frac{3}{2},1}\(t,x;\frac{t}{2},t;\beta_i,\beta_j,D\) \nnm\\
&\leq CQ^2(t)(1+t)^{-\frac{1+\alpha}{2}}\sum^{1}_{i=-1}B_{\frac{1}{2}}(t,x-\beta_it).
\ema

To estimate $I_2^{11}$,  write
$$
P_0H = n\dx \Phi\chi_1+\sqrt{\frac23} m_1\dx \Phi\chi_4+\chi_1\dx (I-\partial_{xx})^{-1}(e^{-\Phi}+\Phi-1),
$$
where $n=(f,\chi_0)$ and $m_1=(f,\chi_1).$ %By \eqref{mVPB9} and \eqref{E0j1}, we obtain
Since
\be
n=e^{-\Phi}-1+\partial_{xx}\Phi,\quad \dx m_1=-\dt n, \label{aaa}
\ee
it follows that
\bmas
 n\dx \Phi&=\frac{1}{2}\dx (\dx \Phi)^2-\dx (e^{-\Phi}+\Phi-1),\\
 m_1\dx \Phi&=\dx (m_1 \Phi)+\Phi \partial_{xx}\partial_t\Phi-\Phi\partial_t\Phi e^{-\Phi} \\
&=\dx (m_1 \Phi )+\partial_t\Big( \Phi \partial_{xx} \Phi-\frac{1}{2}\Phi^2 \Big)-\partial_t\Phi\partial_{xx} \Phi - \Phi\partial_t\Phi (e^{-\Phi}-1).
\emas

Thus, we rewrite $P_0H$ as
\be
P_0H=H_1+\dx H_2+\dt H_3 , \label{P0H}
\ee
where
\bmas
H_1&=-\sqrt{\frac23}\[\partial_t\Phi\partial_{xx} \Phi - \Phi\partial_t\Phi (e^{-\Phi}-1)\]\chi_4,\\
H_2&=\frac{1}{2} (\dx \Phi)^2\chi_1- (e^{-\Phi}+\Phi-1)\chi_1+\sqrt{\frac23}(m_1 \Phi )\chi_4,\\
H_3&=\sqrt{\frac23}\Big(\Phi \partial_{xx} \Phi-\frac{1}{2}\Phi^2\Big)\chi_4.
\emas

It follows that
\bma
I_2^{11}&=\int^{\frac{t}{2}}_0\partial^{\alpha}_{x}G_1(t-s)\ast H_1ds+ \int^{\frac{t}{2}}_0 \partial^{\alpha+1}_{x}G_1(t-s)\ast H_2ds \nnm\\
&\quad+\int^{\frac{t}{2}}_0\dt\partial^{\alpha}_{x}G_1(t-s)\ast H_3ds+  \partial^{\alpha}_{x}G_1(t/2 )\ast H_3(0) \nnm\\
&=:I_2^{111}+I_2^{112}+I_2^{113}+I_2^{114},
\ema
where we have used that $ G_{1}(0,x)=0$ because $G_{1}=G_{L,0}1_{\{|x|\le 6t\}}$ is supported in $|x|\le 6t$.

For $I_2^{111}$ and $I_2^{112}$, we have by Theorem \ref{mvpbth1} and Lemmas \ref{mvpbpw9}--\ref{mvpbpw10} that
\bma
\|I_2^{111}\|&\leq CQ^2(t)\sum^{1}_{i,j=-1}I^{\frac{1+\alpha}{2},2,1}\(t,x;0,\frac{t}{2};\beta_i,\beta_j,D\)\nnm\\
&\leq CQ^2(t)(1+t)^{-\frac{1+\alpha}{2}}\sum^{1}_{i=-1}B_{\frac{1}{2}}(t,x-\beta_it),\label{I425}
\\
\|I_2^{112}\|&\leq CQ^2(t)\sum^{1}_{i,j=-1}I^{\frac{2+\alpha}{2},1,1}\(t,x;0,\frac{t}{2};\beta_i,\beta_j,D\)\nnm\\
&\leq CQ^2(t)(1+t)^{-\frac{1+\alpha}{2}}\sum^{1}_{i=-1}B_{\frac{1}{2}}(t,x-\beta_it).\label{I421}
\ema

By \eqref{GL0} and \eqref{G1xt}, we can obtain
\bmas
&\quad\partial_tG_1(t,x)\nnm\\
&=\sum^{3}_{j=-1}\frac{1}{\sqrt{2\pi}}\int_{|\eta|<r_{0}}\lambda_j(\eta)e^{ix\eta +\lambda_j(\eta)t}\psi_j(\eta)\otimes\bigg\langle \psi_j(\eta)+\frac{1}{1+\eta^2}P^1_0\psi_j(\eta)\bigg|d \eta.
\emas

Repeating the similar arguments as Lemma \ref{mvpbgf2}, we have for $\alpha\ge 0$ that
\be
\|\partial_t\dxa G_1(t,x)\|\leq C(1+t)^{-\frac{2+\alpha}{2}}\sum^{1}_{i=-1}e^{-\frac{|x-\beta_it|^2}{D(1+t)}}+Ce^{-\frac{|x|+t}{D}}. \label{dtg}
\ee

Then, it follows from \eqref{dtg}, Theorem \ref{mvpbth1} and Lemmas \ref{mvpbpw8}--\ref{mvpbpw10} that
\bma
\|I_2^{113}\|+\|I_2^{114}\|&\leq C\delta^2_0 \sum^{1}_{i=-1}(1+t)^{-\frac{1+\alpha}2}B_{2\gamma_0}(t,x-\beta_i t) \nnm\\
&\quad+CQ^2(t)\sum^{1}_{i,j=-1} I^{\frac{2+\alpha}{2},1,1}\(t,x;0,\frac{t}{2};\beta_i,\beta_j,D\) \nnm\\
&\leq C(\delta^2_0+Q^2(t))(1+t)^{-\frac{1+\alpha}{2}}\sum^{1}_{i=-1}B_{\frac{1}{2}}(t,x-\beta_it),\label{I432}
\ema
where we have used
$$
| \Phi(0,x)|,|\pt_{xx} \Phi(0,x)|%&\leq| (I-\partial_{xx})^{-1}n(0)|+| (I-\partial_{xx})^{-1}(e^{-\Phi(0)}+\Phi(0)-1)| \\
 \leq C\delta_0(1+|x|)^{-\gamma_0} .
$$
%where $n_0=(f_0,\chi_0)$.

By taking summation $I_2^{12}+\sum^4_{i=1}I_2^{11i}$, we have
\be\label{I42}
\|I_2^{1}\|\leq C(\delta_0+Q^2(t))(1+t)^{-\frac{1+\alpha}{2}}\sum^{1}_{i=-1}B_{\frac{1}{2}}(t,x-\beta_it).
\ee

For $I_2^2$, we have by  Theorem \ref{mvpbth1} and Lemma \ref{mvpbpw10}  that
\bma
\|I_2^2\|&\leq CQ^2(t)\sum^{1}_{i,j=-1}I^{\frac{2+\alpha}{2},\frac{3}{2},1}\(t,x;0,t;\beta_i,\beta_j,D\)\nnm\\
&\leq CQ^2(t)(1+t)^{-\frac{1+\alpha}{2}}\sum^{1}_{i=-1}B_{\frac{1}{2}}(t,x-\beta_it).
\ema

For $I_2^3$ and $I_2^4$, we have by \eqref{MH1}, Theorem \ref{mvpbth1}, Lemma \ref{mvpbpw10} and Lemma \ref{mvpbpw12} that
\be
\|I_2^3\|+\|I_2^4\|\leq CQ^2(t)(1+t)^{-\frac{1+\alpha}{2}}\sum^{1}_{i=-1}B_{\frac{1}{2}}(t,x-\beta_it).\label{FI7}
\ee

Thus,
\be
\|I_2\|\leq CQ^2(t)(1+t)^{-\frac{1+\alpha}{2}}\sum^{1}_{i=-1}B_{\frac{1}{2}}(t,x-\beta_it).\label{nonlin2}
\ee

By taking summation $\eqref{nonlin1}+\eqref{nonlin3}+\eqref{nonlin2}$, we obtain
\bq\label{PDF}
\|\dxa f\|\leq C(\delta_0+Q^2(t))(1+t)^{-\frac{1+\alpha}{2}}\sum^{1}_{i=-1}B_{\frac{1}{2}}(t,x-\beta_it).
\eq

%By a similar argument for proving \eqref{PDF}, we get
%\be \|P_{0i}f(t,x)\|\leq C(\delta_0+Q^2(t))(1+t)^{-\frac{1}{2}}\sum^{1}_{i=-1}B_{\frac{1}{2}}(t,x-\beta_it),\quad i=1,2,3.\label{H6+}
%\ee

By \eqref{mVPB9}, we have
$$
\Phi=-(I-\partial_{xx})^{-1}n+(I-\partial_{xx})^{-1}(e^{-\Phi}+\Phi-1),\\
%&\dx \Phi=-(I-\partial_{xx})^{-1}\int_{\R^3}\dx f\chi_0dv+(I-\partial_{xx})^{-1}\dx \Phi(1-e^{-\Phi}),\\
%&\partial_t\Phi=(I-\partial_{xx})^{-1}\int_{\R^3}\dx f\chi_1dv+(I-\partial_{xx})^{-1}\partial_t\Phi(1-e^{-\Phi}),
$$
which implies that
\bma
 |\Phi(t,x)|&\leq Ce^{-|x|}\ast |n|+Ce^{-|x|}\ast\[e^{\|\Phi\|_{L^{\infty}_x}}|\Phi|^2\]\nnm\\
 &\leq C(\delta_0+Q^2(t))(1+t)^{-\frac{1}{2}}\sum^{1}_{i=-1}B_{\frac{1}{2}}(t,x-\beta_it),\label{p0p}\\
 |\dx \Phi(t,x)|&\leq Ce^{-|x|}\ast |\dx n|+Ce^{-|x|}\ast\[e^{\|\Phi\|_{L^{\infty}_x}}(|\dx \Phi| |\Phi|)\]\nnm\\
 &\leq C(\delta_0+Q^2(t))(1+t)^{-1}\sum^{1}_{i=-1}B_{\frac{1}{2}}(t,x-\beta_it),\label{pxp}\\
 |\partial_t\Phi(t,x)|&\leq Ce^{-|x|}\ast|\dx m_1|+Ce^{-|x|}\ast\[e^{\|\Phi\|_{L^{\infty}_x}}(|\partial_t\Phi| |\Phi|)\]\nnm\\
 &\leq C(\delta_0+Q^2(t))(1+t)^{-1}\sum^{1}_{i=-1}B_{\frac{1}{2}}(t,x-\beta_it).\label{ptp}
\ema

By \eqref{mVPB8}, we have
\bq\label{H9}
\partial_tf+v_1\dx f+\nu(v)f=Kf+v_1\dx \Phi \chi_0+H+\Gamma(f,f).
\eq

Then, we can represent $\dx^\alpha f$ as
\bq\label{H10}
\dx^\alpha f =S^t\dx^\alpha f_0+\int^t_0S^{t-s}\dx^\alpha(Kf+v_1\dx \Phi \chi_0+H+\Gamma(f,f))ds.
\eq

By Lemma \ref{mvpbpw11}, it holds that
\bq\label{H11}
\|S^t\dx^\alpha f_0(x)\|_{L^{\infty}_{v,3}}\leq C\delta_0e^{-\frac{2\nu_0t}{3}}(1+|x|^2)^{-\gamma_0}.
\eq

By \eqref{PDF}, we obtain
$$
\|\dx^\alpha Kf \|_{L^{\infty}_{v,0}}\leq C\|\dx^\alpha f\|\leq C(\delta_0+Q^2(t))(1+t)^{-\frac{1+\alpha}{2}}\sum^{1}_{i=-1}B_{\frac{1}{2}}(t,x-\beta_it),
$$
which, together with Lemma \ref{mvpbpw11} and \eqref{MH1}, implies that
\bma\label{H12}
&\bigg\|\int^t_0S^{t-s}\dx^\alpha(Kf+v_1\dx \Phi \chi_0+H+\Gamma(f,f))ds\bigg\|_{L^{\infty}_{v,1}}\nnm\\
&\leq C(\delta_0+Q^2(t))(1+t)^{-\frac{1+\alpha}{2}}\sum^{1}_{i=-1}B_{\frac{1}{2}}(t,x-\beta_it).
\ema

It follows from \eqref{H10}--\eqref{H12} that
\bq
\|\dx^\alpha f(t,x)\|_{L^{\infty}_{v,1}}\leq C(\delta_0+Q^2(t))(1+t)^{-\frac{1+\alpha}{2}}\sum^{1}_{i=-1}B_{\frac{1}{2}}(t,x-\beta_it).
\eq

By induction and
$$
\|\dx^\alpha Kf(t,x)\|_{L^{\infty}_{v,k}}\leq C\|\dx^\alpha f(t,x)\|_{L^{\infty}_{v,k-1}},\quad k\geq1,
$$
we have
\bq\label{H13}
\|\dxa f(t,x)\|_{L^{\infty}_{v,3}}\leq C(\delta_0+Q^2(t))(1+t)^{-\frac{1+\alpha}{2}}\sum^{1}_{i=-1}B_{\frac{1}{2}}(t,x-\beta_it).
\eq

Taking the derivative $\partial_{v_1}$ to \eqref{H9}, we have
\bq
\partial_t\partial_{v_1}f+v_1\dx \partial_{v_1}f+\nu(v)\partial_{v_1}f= H_4+\partial_{v_1}\Gamma(f,f),
\eq
where
\bq
\begin{split}
 H_4&=-\dx f-\partial_{v_1}\nu(v)f+\partial_{v_1}(v_1\sqrt{M})\dx \Phi+\partial_{v_1}(Kf)\\
&\quad+\frac{1}{2}\dx \Phi (f+v_1\partial_{v_1}f)-\dx \Phi (\partial^2_{v_1}f).\nonumber
\end{split}
\eq

Thus, we can represent $\partial_{v_1}f$ as
\bq\label{H14}
\partial_{v_1}f(t,x)=S^t\partial_{v_1}f_0+\int^t_0S^{t-s}( H_4+\partial_{v_1}\Gamma(f,f))ds.
\eq

It follows from \eqref{H13} that
\bma\label{H141}
&\quad\| H_4(s,x)\|_{L^{\infty}_{v,1}}+\|\partial_{v_1}\Gamma(f,f)(s,x)\|_{L^{\infty}_{v,1}} \nnm\\
%&\leq C(\|\dx f\|_{L^{\infty}_{v,1}}+\|f\|_{L^{\infty}_{v,1}}+|\dx \Phi|)+C\|f\|_{L^{\infty}_{v,2}}(\|\partial_{v_1}f\|_{L^{\infty}_{v,2}}+\|f\|_{L^{\infty}_{v,2}})\nnm\\
%&\quad+C|\dx \Phi|(\|f\|_{L^{\infty}_{v,1}}+\|\partial_{v_1}f\|_{L^{\infty}_{v,2}}+\|\partial^2_{v_1}f\|_{L^{\infty}_{v,1}})\nnm\\
&\leq C\[(\delta_0+Q^2(t))(1+s)^{-\frac{1}{2}}+Q^2(t)(1+s)^{-\frac{5}{4}}\]\sum^{1}_{i=-1}B_{\frac{1}{2}}(s,x-\beta_is),%\\
%&\quad+CQ^2(t)(1+s)^{-\frac{5}{4}}\sum^{1}_{i=-1}B_{\frac{1}{2}}(s,x-\beta_is),
\ema
where we have used
\bmas
& \|(\partial_{v_1}K) f\|_{L^{\infty}_{v,k}}\leq C\| f\|_{L^{\infty}_{v,k}} ,\nonumber\\
&\|\partial^2_{v_1}f\|_{L^{\infty}_{v,1}}\leq C\|w\partial^{2}_{v_1}f\|_{H^2_{v}}\leq C(1+t)^{-\frac{3}{4}}Q(t).
\emas

By \eqref{CbS1} and \eqref{H14}--\eqref{H141}, we obtain
\bq\label{H15}
\|\partial_{v_1}f\|_{L^{\infty}_{v,2}}\leq C(\delta_0+Q^2(t))(1+t)^{-\frac{1}{2}}\sum^{1}_{i=-1}B_{\frac{1}{2}}(t,x-\beta_it).
\eq

We claim that $Q(t)\leq C\delta_0$, which implies that $E_4(f)\leq C\delta_0^2(1+t)^{-\frac12}$. By \eqref{ENI2} and $d_1\mathcal{H}_{4,3}(f)\leq D_{4,3}(f)$ with $d_1>0$, we have
\bma
\mathcal{H}_{4,3}(f)(t)&\leq Ce^{-d_1 t}\mathcal{H}_{4,3}(f_0)+C \int^t_0e^{-d_1(t-s)}\|\dx P_0f(s)\|^2_{L^2_{x,v} }ds\nnm\\
&\quad+C \delta_0^2\int^t_0e^{-d_1(t-s)}(1+s)^{-\frac{1}{2}}\|P_0f(s)\|^2_{L^2_{v}(L^{\infty}_x)}ds\nnm\\
&\leq C\delta_0^2e^{-d_1 t}+C (\delta_0+Q^2(t))^2 \int^t_0e^{-d_1(t-s)}(1+s)^{-\frac{3}{2}}ds\nnm\\
&\leq C(1+t)^{-\frac{3}{2}}(\delta_0+Q^2(t))^2.\label{H16}
\ema

Combining  \eqref{p0p}--\eqref{ptp}, \eqref{H13} and \eqref{H15}--\eqref{H16}, we have
$$
Q(t)\leq C\delta_0+CQ^2(t),
$$
from which \eqref{NL5}--\eqref{NL4} can be verified so long as $\delta_0>0$ is small enough.

\medskip
\noindent {\bf Acknowledgements:}  This work is supported by the National Science Fund for Excellent Young Scholars No. 11922107, the National Natural Science Foundation of China  grants No. 12171104,  Guangxi Natural Science Foundation No. 2019JJG110010, and the special foundation for Guangxi Ba Gui Scholars.
\bigskip

%\end{CJK*}

\end{document}